\newcommand{\field}[1]{\mathbb{#1}}
\newcommand{\C}{\field{C}}
\newcommand{\K}{\field{K}}
\newcommand{\R}{\field{R}}
\newcommand{\rr}{\field{R}}
\newcommand{\pa}[2]{\frac{\partial #1}{\partial #2}}
\newcommand{\lra}[1]{\langle #1 \rangle}
\DeclareMathOperator{\Stab}{Stab}
\DeclareMathOperator{\id}{id}
\DeclareMathOperator{\GL}{GL}
\newcommand{\cc}{\mathbf{\mathbb{C}}}
\newtheorem{defi}{Definition}[section]
\newtheorem{lem}[defi]{Lemma}
\newtheorem{theorem}[defi]{Theorem}
\newtheorem{co}[defi]{Corollary}
\newtheorem{re}[defi]{Remark}
\subjclass[2010]{14 D 99, 14 R 99, 51 M 99}
\thanks{The  authors were partially supported by the Narodowe Centrum Nauki grant number 2015/17/B/ST1/02637}
\title[On quadratic polynomial mappings]{On quadratic polynomial mappings from the plane into the $n$ dimensional space} \makeatletter
\author{Micha{\l} Farnik,  Zbigniew Jelonek, Piotr Migus}
\address[M. Farnik]{Faculty of Mathematics and Computer Science\\
Jagiellonian University\\
{\L}ojasiewicza 6, 30-348 Krak\'ow, Poland}
\email{michal.farnik@gmail.com}
\address[Z. Jelonek]{Instytut Matematyczny\\
Polska Akademia Nauk\\
\'Sniadeckich 8, 00-656 Warszawa, Poland}
\email{najelone@cyf-kr.edu.pl}
\address[P. Migus]{Department of Mathematics and Physics\\
Kielce University of Technology\\
Al. 1000 L PP 7,25-314 Kielce, Poland}
\email{migus.piotr@gmail.com}
\keywords{quadratic polynomial mappings, linear group, linear equivalence, topological equivalence}
\date{\today}
\begin{document}

\begin{abstract}{We show that, up to linear equivalence, there are only finitely many  polynomial
quadratic mappings $F:\C^2\to\C^n$ and $F:\R^2\to\R^n$. We list all possibilities.}
\end{abstract}

\maketitle

\section{Introduction}
Let $\Omega_{\K^2}(d_1,\ldots,d_n)$ denote the space of polynomial mappings $F=(f_1,\ldots,f_n):\K^2\to\K^n$ where $\deg f_i\leq d_i$ for $1\leq i\leq n$. Let $F,G\in \Omega_{\K^2}(d_1,\ldots,d_n)$. We say that $F$ is \emph{topologically} (respectively \emph{linearly}) \emph{equivalent} to $G$ if there are homeomorphisms (respectively linear isomorphisms) $\Phi:\K^2\to\K^2$ and $\Psi:\K^n\to\K^n$ such that $F=\Psi\circ G\circ \Phi$. In the paper \cite{a-n} it was shown that there is only a finite number of different topological types of mappings in  $\Omega_{\K^2}(d_1,d_2)$. This result was generalized for $\Omega_{\K^2}(d_1,\ldots,d_n)$ by Sabbah in \cite{sabb}.  Moreover, we know (see e.g. \cite{jel2}) that there is a Zariski open dense subset $U\subset \Omega_{\K^2}(d_1,\ldots,d_n)$ such that every mapping $f\in U$ has the same, generic, topological type. If a mapping $f$  has a generic topological type then we say that $f$ is a \emph{topologically generic} mapping.  In practice it is difficult to describe the generic  topological type and other topological types  effectively. Here we focus on the simplest case of quadratic polynomial mappings and we describe linear types in $\Omega_{\K^2}(2,\ldots,2)$. We have $34$ linear types in the complex case and $41$ in the real case.
From this we can  describe all topological types in $\Omega_{\K^2}(2,\ldots,2)$. In fact in the complex case we have $18$ topological types and the topological equivalence coincides with the equivalence with respect to the group of polynomial automorphisms, i.e., topological and polynomial equivalences coincide. Moreover, in the real case we have $22$ types with respect to the polynomial equivalence.

General quadratic mappings have been studied (see \cite{agrach}, \cite{gio}, \cite{i-n}, \cite{in2}, \cite{in3}, \cite{in4}) and this subject is interesting on its own. The classification (up to isotopy) of quadratic mappings of the plane was done only for real homogeneous mappings (see Proposition 1 in \cite{agrach}). In \cite{bgv}, \cite{dgr} and \cite{npg} real quadratic mappings of the plane have been studied in relation to their dynamical behavior and equivalence with respect to diffeomorphisms, in \cite{npg} the authors obtained the classification of critical sets and their images. In our recent paper \cite{fj} we classified real and complex quadratic mappings of the plane with respect to linear equivalence. Hence from this point of view we fill a gap in the literature.

In the paper \cite{fj} it was shown that for $\K$ equal $\C$ or $\R$ the space $\Omega_{\K^2}(2,2)$ splits into a finite number of equivalence classes with respect to linear equivalence (hence also with respect to topological equivalence). Moreover, the authors provided a full classification of mappings in $\Omega_{\K^2}(2,2)$. The aim of this paper is to continue the research started in \cite{fj} and to obtain a full classification of quadratic mappings $\K^2\to\K^n$ for any integer $n>2$ (and consequently together with \cite{fj} for any $n>0$).

The main tools used in \cite{fj} were the critical set, the discriminant and the topological degree. While those are still useful in analyzing $\K^2\to\K^n$ mappings, they do not provide sufficient information to distinguish between some equivalence classes. Hence in this paper we use also the self-intersection curve and the critical space. The latter can provide useful information even if the mapping is an embedding.

Aside from the complete classification we also obtain some interesting geometric results (see the comments after the proofs of Theorems \ref{thf1} and \ref{thf2}) about quadratic mappings. For example we obtain that if $F:\C^2\to\C^n$ is quadratic, $\mu(F)=1$ and $\# F^{-1}(y)\geq 3$ for some $y\in\C^n$ then $F$ is equivalent to $(x^2+y,y^2+x,xy,0,\ldots,0)$, in particular it has three singular points. Let us recall that $\mu(F)$ denotes the number of points in $F^{-1}(F(x))$ for a generic $x\in \C^2.$ For the definition of $SI_F$ (resp. $O(F)$) see Definition 2.5
(resp. Definition 2.2).

We will now give the complete classification. We start with $\Omega_{\C^2}(2,2,2)$ where we have the following cases:

\begin{itemize}

\item[(1)] (generic case) $F_1=(x^2+y,y^2+x,xy)$ with three singular points $\left(\frac{\varepsilon}{2},\frac{\varepsilon^2}{2}\right)$ and $SI_{F_1}$ being the union of three lines $y+\varepsilon x-\varepsilon^2=0$, where $\varepsilon^3=1$ and  $\dim O(F_1)=18$. Moreover, $\mu(F_1)=1$.

\item[(2)] $F_2=(x^2+y,y^2+x,xy+\frac{1}{2}x+\frac{1}{2}y)$ with singular points $P_1=(\frac{1}{2},\frac{1}{2})$ and $P_2=(-\frac{1}{2},-\frac{1}{2})$. Moreover $SI_{F_2}=V((x-y)^2(x+y-1))$, $\dim O(F_2)=17$ and $\mu(F_2)=1$.

\item[(3)] $F_3=(x^2,y^2+x,xy)$ with singular point $P=(0,0)$, $SI_{F_3}=V(x^3)$ and $\dim O(F_{3})=16$. Moreover, $\mu(F_3)=1$.

\item[(4)] $F_{4}=(x^2,y^2,xy)$ with singular point $P=(0,0)$, $\mu(F_4)=2$ and $\dim O(F_{4})=14$.

\item[(5)] $F_5=(x^2,y^2,x+y)$ with singular point $P=(0,0)$, $SI_{F_5}=V(x+y)$ and $\dim O(F_{5})=17$. Moreover, $\mu(F_5)=1$.

\item[(6)] $F_6=(x^2+y,y^2,x)$ with $C(F_6)=\emptyset$ and $\dim O(F_{6})=16$. Moreover, $\mu(F_6)=1$.

\item[(7)] $F_7=(x^2+y,y^2+x,0)$ with $C(F_7)=\{4xy-1=0\}$ and $\dim O(F_{7})=15$. Moreover, $\mu(F_7)=4$.

\item[(8)] $F_8=(x^2,xy,y)$ with singular point $P=(0,0)$, $SI_{F_8}=V(y)$ and $\dim O(F_{8})=16$. Moreover, $\mu(F_8)=1$.

\item[(9)] $F_9=(x^2+y,xy,x)$ with $C(F_9)=\emptyset$ and $\dim O(F_{9})=15$. Moreover, $\mu(F_9)=1.$

\item[(10)] $F_{10}=(x^2+y,xy,0)$ with $C(F_{10})=\{2x^2-y=0\}$ and $\dim O(F_{10})=14$. Moreover, $\mu(F_{10})=3.$

\item[(11)] $F_{11}=(x^2,y^2,y)$ with $C(F_{11})=SI_{F_{11}}=\{x=0\}$ and $\dim O(F_{11})=15$. Moreover, $\mu(F_{11})=2.$

\item[(12)] $F_{12}=(x^2+y,y^2,0)$ with $C(F_{12})=\{4xy=0\}$ and $\dim O(F_{12})=14$. Moreover, $\mu(F_{12})=4.$

\item[(13)] $F_{13}=(x^2,y^2,0)$ with $C(F_{13})=\{4xy=0\}$ and $\dim O(F_{13})=13$. Moreover, $\mu(F_{13})=4.$

\item[(14)] $F_{14}=(x^2,xy,x)$ with $C(F_{14})=\{2x=0\}$ and $\dim O(F_{14})=14$. Moreover, $\mu(F_{14})=1.$

\item[(15)] $F_{15}=(x^2-x,xy,0)$ with $C(F_{15})=\{2x^2-x=0\}$ and $\dim O(F_{15})=13$. Moreover, $\mu(F_{15})=2.$

\item[(16)] $F_{16}=(x^2,xy,0)$ with $C(F_{16})=\{x^2=0\}$ and $\dim O(F_{16})=12$.  Moreover, $\mu(F_{16})=2.$

\item[(17)] $F_{17}=(xy,x,y)$ with $C(F_{17})=\emptyset$ and $\dim O(F_{17})=14$.  Moreover, $\mu(F_{17})=1.$

\item[(18)] $F_{18}=(x^2,x,y)$ with $C(F_{18})=\emptyset$ and $\dim O(F_{18})=13$.  Moreover, $\mu(F_{18})=1.$

\item[(19)] $F_{19}=(xy,x+y,0)$ with $C(F_{19})=\{y-x=0\}$ and $\dim O(F_{19})=13$.  Moreover, $\mu(F_{19})=2.$

\item[(20)] $F_{20}=(x,xy,0)$ with $C(F_{20})=\{x=0\}$ and $\dim O(F_{20})=12$.  Moreover, $\mu(F_{20})=1.$

\item[(21)] $F_{21}=(x^2,y,0)$ with $C(F_{21})=\{2x=0\}$ and $\dim O(F_{21})=12$.  Moreover, $\mu(F_{21})=2.$

\item[(22)] $F_{22}=(x^2+y,x,0)$ with $C(F_{22})=\emptyset$ and $\dim O(F_{22})=11$.  Moreover, $\mu(F_{22})=1.$

\item[(23)] $F_{23}=(x^2,x,0)$ with $C(F_{23})=\C^2$ and $\dim O(F_{23})=10$.

\item[(24)] $F_{24}=(x,y,0)$ with $C(F_{24})=\emptyset$ and $\dim O(F_{24})=9$.  Moreover, $\mu(F_{24})=1.$

\item[(25)] $F_{25}=(xy,0,0)$ with $C(F_{25})=\C^2$ and $\dim O(F_{25})=10$.

\item[(26)] $F_{26}=(x^2+y,0,0)$ with $C(F_{26})=\C^2$ and $\dim O(F_{26})=9$.

\item[(27)] $F_{27}=(x^2,0,0)$ with $C(F_{27})=\C^2$ and $\dim O(F_{27})=8$.

\item[(28)] $F_{28}=(x,0,0)$ with $C(F_{28})=\C^2$ and $\dim O(F_{28})=7$.

\item[(29)] $F_{29}=(0,0,0)$ with $C(F_{29})=\C^2$ and $\dim O(F_{29})=3$.

\end{itemize}

For $\Omega_{\C^2}(2,2,2,2)$ we obtain the $29$ cases $(f_1,f_2,f_3,0)$ where $(f_1,f_2,f_3)=F_i$ for $1\leq i\leq 29$. Moreover we obtain the following cases:

\begin{itemize}

\item[(1)] (generic case) $G_1=(x^2+y,y^2,xy,x)$ which is an immersion. Moreover, $\dim O(G_1)=24$.

\item[(2)] $G_2=(x^2,y^2,xy,x)$ with singular point $(0,0)$, $SI_{G_2}=\{x=0\}$ and $\dim O(G_2)=23$.  Moreover, $\mu(G_{2})=1$.

\item[(3)] $G_3=(x^2,y^2,x,y)$ which is an immersion. Moreover, $\dim O(G_3)=22$.

\item[(4)] $G_4=(x^2,xy,x,y)$ which is an immersion. Moreover, $\dim O(G_4)=21$.

\end{itemize}

Finally for quadratic mappings $\C^2\to\C^n$ for $n\geq 5$ the generic case is $G_0=(x^2,xy,y^2,x,y,0,\ldots,0)$. All other cases can be obtained from the $\Omega_{\C^2}(2,2,2,2)$ cases by composing with the standard inclusion of $\C^4$ in $\C^n$.

We also obtain similar results in the real case. For $\Omega_{\R^2}(2,2,2)$ we have the following possibilities:

\begin{itemize}
\item[(1a)] $F_1=(x^2+y,y^2+x,xy)$ with singular point $P=(\frac{1}{2},\frac{1}{2})$, $SI_{F_1}=\{(x,y):x+y-1=0\}$ and $\dim O(F_1)=18$.

\item[(1b)] $F_{1'}=(x^2-y^2+x,2xy-y,-3x^2+y^2)$ with singular points $P_1=\left( \frac{1}{2},0\right)$, $P_2=\left(\frac{1}{4},\frac{\sqrt{3}}{4} \right)$ and $P_3=\left(\frac{1}{4},\frac{-\sqrt{3}}{4} \right)$. Moreover $SI_{F_{1'}}=\{(x,y):(x-\frac{1}{2})(x^2-\frac{1}{3}y^2)=0\}$ and $\dim O(F_1)=18$.

\item[(2)] $F_2=(x^2+y,y^2+x,xy+\frac{1}{2}x+\frac{1}{2}y)$ with singular points $P_1=(\frac{1}{2},\frac{1}{2})$ and $P_2=(-\frac{1}{2},-\frac{1}{2})$. Moreover $SI_{F_2}=\{(x-y)(x+y-1)=0\}$ and $\dim O(F_2)=17$

\item[(3)] $F_3=(x^2,y^2+x,xy)$ with singular point $P=(0,0)$, $SI_{F_3}=\{x=0\}$ and $\dim O(F_{3})=16$.

\item[(4)] $F_{4}=(x^2,y^2,xy)$ with singular point $P=(0,0)$ and $\dim O(F_{4})=14$.

\item[(5)] $F_5=(x^2,y^2,x+y)$ with singular point $P=(0,0)$, $SI_{F_5}=\{x+y=0\}$ and $\dim O(F_{5})=17$.

\item[(6)] $F_6=(x^2+y,y^2,x)$ with $C(F_6)=\emptyset$ and $\dim O(F_{6})=16$.

\item[(7a)] $F_7=(x^2+y,y^2+x,0)$ with $C(F_7)=\{4xy-1=0\}$ and $\dim O(F_{7})=15$.

\item[(7b)] $F_{7'}=(x^2-y^2+x,2xy-y,0)$ with $C(F_{7'})=\{4x^2+4y^2-1=0\}$ and $\dim O(F_{7'})=15$.

\item[(8)] $F_8=(x^2,xy,y)$ with singular point $P=(0,0)$, $SI_{F_8}=\{y=0\}$ and $\dim O(F_{8})=16$.

\item[(9)] $F_9=(x^2+y,xy,x)$ with $C(F_9)=\emptyset$ and $\dim O(F_{9})=15$.

\item[(10)] $F_{10}=(x^2+y,xy,0)$ with $C(F_{10})=\{2x^2-y=0\}$ and $\dim O(F_{10})=14$.

\item[(11)] $F_{11}=(x^2,y^2,y)$ with $C(F_{11})=SI_{F_{11}}=\{x=0\}$ and $\dim O(F_{11})=15$.

\item[(12)] $F_{12}=(x^2+y,y^2,0)$ with $C(F_{12})=\{4xy=0\}$ and $\dim O(F_{12})=14$.

\item[(13a)] $F_{13}=(x^2,y^2,0)$ with $C(F_{13})=\{4xy=0\}$ and $\dim O(F_{13})=13$.

\item[(13b)] $F_{13'}=(x^2-y^2,xy,0)$ with $C(F_{13'})=\{(0,0)\}$ and $\dim O(F_{13'})=13$.

\item[(14)] $F_{14}=(x^2,xy,x)$ with $C(F_{14})=\{2x=0\}$ and $\dim O(F_{14})=14$.

\item[(15)] $F_{15}=(x^2-x,xy,0)$ with $C(F_{15})=\{2x^2-x=0\}$ and $\dim O(F_{15})=13$.

\item[(16)] $F_{16}=(x^2,xy,0)$ with $C(F_{16})=\{x=0\}$ and $\dim O(F_{16})=12$.

\item[(17a)] $F_{17}=(xy,x,y)$ with $C(F_{17})=\emptyset$ and $\dim O(F_{17})=14$.

\item[(17b)] $F_{17'}=(x^2+y^2,x,y)$ with $C(F_{17'})=\emptyset$ and $\dim O(F_{17})=14$.

\item[(18)] $F_{18}=(x^2,x,y)$ with $C(F_{18})=\emptyset$ and $\dim O(F_{18})=13$.

\item[(19a)] $F_{19}=(xy,x+y,0)$ with $C(F_{19})=\{y-x=0\}$ and $\dim O(F_{19})=13$.

\item[(19b)] $F_{19'}=(x^2+y^2,x,0)$ with $C(F_{19'})=\{y=0\}$ and $\dim O(F_{19'})=13$.

\item[(20)] $F_{20}=(x,xy,0)$ with $C(F_{20})=\{x=0\}$ and $\dim O(F_{20})=12$.

\item[(21)] $F_{21}=(x^2,y,0)$ with $C(F_{21})=\{2x=0\}$ and $\dim O(F_{21})=12$.

\item[(22)] $F_{22}=(x^2+y,x,0)$ with $C(F_{22})=\emptyset$ and $\dim O(F_{22})=11$.

\item[(23)] $F_{23}=(x^2,x,0)$ with $C(F_{23})=\R^2$ and $\dim O(F_{23})=10$.

\item[(24)] $F_{24}=(x,y,0)$ with $C(F_{24})=\emptyset$ and $\dim O(F_{24})=9$.

\item[(25a)] $F_{25}=(xy,0,0)$ with $C(F_{25})=\R^2$ and $\dim O(F_{25})=10$.

\item[(25b)] $F_{25'}=(x^2+y^2,0,0)$ with $C(F_{25'})=\R^2$ and $\dim O(F_{25'})=10$.

\item[(26)] $F_{26}=(x^2+y,0,0)$ with $C(F_{26})=\R^2$ and $\dim O(F_{26})=9$.

\item[(27)] $F_{27}=(x^2,0,0)$ with $C(F_{27})=\R^2$ and $\dim O(F_{27})=8$.

\item[(28)] $F_{28}=(x,0,0)$ with $C(F_{28})=\R^2$ and $\dim O(F_{28})=7$.

\item[(29)] $F_{29}=(0,0,0)$ with $C(F_{29})=\R^2$ and $\dim O(F_{29})=3$.

\end{itemize}

For $\Omega_{\R^2}(2,2,2,2)$ in addition to the mappings coming from $\Omega_{\R^2}(2,2,2)$ we obtain the following cases:

\begin{itemize}

\item[(1)] (generic case) $G_1=(x^2+y,y^2,xy,x)$ which is an immersion. $\dim O(G_1)=24$.

\item[(2)] $G_2=(x^2,y^2,xy,x)$ with singular point $(0,0)$, $SI_{G_2}=\{x=0\}$ and $\dim O(G_2)=23$.

\item[(3a)] $G_3=(x^2,y^2,x,y)$ which is an immersion. $\dim O(G_3)=22$

\item[(3b)] $G_{3'}=(x^2-y^2,xy,x,y)$ which is an immersion. $\dim O(G_{3'})=22$

\item[(4)] $G_4=(x^2,xy,x,y)$ which is an immersion. Moreover, $\dim O(G_4)=21$.

\end{itemize}

Similarly as over $\C$ for the quadratic mappings $\R^2\to\R^n$ for $n\geq 5$ the generic case is $G_0=(x^2,xy,y^2,x,y,0,\ldots,0)$ and all other cases can be obtained from the $\Omega_{\R^2}(2,2,2,2)$.

Obviously if $i:\C^k\to\C^n$ is the standard inclusion then $F:\C^2\to\C^k$ and $i\circ F:\C^2\to\C^n$ share their geometric properties. However the relation between $O(F)$ and $O(i\circ F)$ may not be completely obvious. It is explained in Lemma \ref{lemstab} and Corollary \ref{codimo}. For the convenience of the reader we will write down the dimensions of all the orbits of quadratic mappings $\C^2\to\C^n$, we have:

\begin{enumerate}
\item $\dim_a(G_0)=5$, so $\dim O(i\circ G_0)=6n$
\item $\dim_a(G_1)=4$, so $\dim O(i\circ G_1)=5n+4$
\item $\dim_a(G_2)=4$, so $\dim O(i\circ G_1)=5n+3$
\item $\dim_a(G_3)=4$, so $\dim O(i\circ G_1)=5n+2$
\item $\dim_a(G_4)=4$, so $\dim O(i\circ G_1)=5n+1$
\item $\dim_a(F_1)=3$, so $\dim O(i\circ F_1)=4n+6$
\item $\dim_a(F_2)=3$, so $\dim O(i\circ F_2)=4n+5$
\item $\dim_a(F_3)=3$, so $\dim O(i\circ F_3)=4n+4$
\item $\dim_a(F_4)=3$, so $\dim O(i\circ F_4)=4n+2$
\item $\dim_a(F_5)=3$, so $\dim O(i\circ F_5)=4n+5$
\item $\dim_a(F_6)=3$, so $\dim O(i\circ F_6)=4n+4$
\item $\dim_a(F_7)=2$, so $\dim O(i\circ F_7)=3n+6$
\item $\dim_a(F_8)=3$, so $\dim O(i\circ F_8)=4n+4$
\item $\dim_a(F_9)=3$, so $\dim O(i\circ F_9)=4n+3$
\item $\dim_a(F_{10})=2$, so $\dim O(i\circ F_{10})=3n+5$
\item $\dim_a(F_{11})=3$, so $\dim O(i\circ F_{11})=4n+3$
\item $\dim_a(F_{12})=2$, so $\dim O(i\circ F_{12})=3n+5$
\item $\dim_a(F_{13})=2$, so $\dim O(i\circ F_{13})=3n+4$
\item $\dim_a(F_{14})=3$, so $\dim O(i\circ F_{14})=4n+2$
\item $\dim_a(F_{15})=2$, so $\dim O(i\circ F_{15})=3n+4$
\item $\dim_a(F_{16})=2$, so $\dim O(i\circ F_{16})=3n+3$
\item $\dim_a(F_{17})=3$, so $\dim O(i\circ F_{17})=4n+2$
\item $\dim_a(F_{18})=3$, so $\dim O(i\circ F_{18})=4n+1$
\item $\dim_a(F_{19})=2$, so $\dim O(i\circ F_{19})=3n+4$
\item $\dim_a(F_{20})=2$, so $\dim O(i\circ F_{20})=3n+3$
\item $\dim_a(F_{21})=2$, so $\dim O(i\circ F_{21})=3n+3$
\item $\dim_a(F_{22})=2$, so $\dim O(i\circ F_{22})=3n+2$
\item $\dim_a(F_{23})=2$, so $\dim O(i\circ F_{23})=3n+1$
\item $\dim_a(F_{24})=2$, so $\dim O(i\circ F_{24})=3n$
\item $\dim_a(F_{25})=1$, so $\dim O(i\circ F_{24})=2n+4$
\item $\dim_a(F_{26})=1$, so $\dim O(i\circ F_{24})=2n+3$
\item $\dim_a(F_{27})=1$, so $\dim O(i\circ F_{24})=2n+2$
\item $\dim_a(F_{28})=1$, so $\dim O(i\circ F_{24})=2n+1$
\item $\dim_a(F_{29})=0$, so $\dim O(i\circ F_{24})=n$
\end{enumerate}

To better visualize the structure of the space of quadratic mappings we present Figure~\ref{Picture1}.

Each row consists of orbits of given dimension, from the largest to the smallest, and a rising path joins two orbits if the smaller is contained in the closure of the larger. To compare orbits of a mapping $\C^2\rightarrow\C^{n_1}$ and a mapping $\C^2\rightarrow\C^{n_2}$ for $n_1<n_2$ we compose the first one with the standard inclusion $\C^{n_1}\rightarrow\C^{n_2}$. In many cases the existence or the lack of an edge follows trivially from the characterization of the respective orbits. However in some cases it is not obvious whether containment holds or not, hence we provide a brief argument in Section \ref{secOCC}.

\newpage
\begin{figure}[H]
                \centering
\begin{tikzpicture}
\clip(-0.16,-2.1) rectangle (7.6,11.8);
\draw (0.5,11.5)-- (0.5,5.);

\draw (0.5,10.5)-- (2.,10);
\draw (2.,10)-- (2.,1.);
\draw (3.5,9.5)-- (3.5,-0.5);
\draw (5.,6.)-- (5.,3.);
\draw (6.5,5.)-- (6.5,0.);

\draw (0.5,9.)-- (2.,8.);
\draw (0.5,8.)-- (2.,7.);
\draw (0.5,8.)-- (3.5,7.);
\draw (0.5,7.)-- (3.5,6.);
\draw (0.5,7.)-- (5.,6.);
\draw (0.5,7.)-- (2.,5.);
\draw (0.5,5.)-- (2.,4.);
\draw (0.5,5.)-- (6.5,4.);
\draw (0.5,5.)-- (3.5,1.);

\draw (2.,10)-- (3.5,9.5);
\draw (2.,8.)-- (3.5,7.);
\draw (2.,7.)-- (3.5,6.);
\draw (2.,7.)-- (5.,6.);
\draw (2.,6.)-- (6.5,5.);
\draw (2.,5.)-- (5.,4.);
\draw (2.,4.)-- (6.5,3.);
\draw (2.,1.)-- (3.5,-0.5);

\draw (3.5,6.)-- (2.,5.);
\draw (3.5,6.)-- (5.,5.);
\draw (3.5,5.)-- (2.,4.);
\draw (3.5,4.)-- (6.5,3.);
\draw (3.5,3.)-- (2.,1.);
\draw (3.5,-0.5)-- (5.,-1.);

\draw (5.,6.)-- (6.5,5.);
\draw (5.,5.)-- (3.5,4.);
\draw (5.,4.)-- (3.5,3.);
\draw (5.,3.)-- (3.5,1.);
\draw (5.,3.)-- (2.,1.);
\draw (5.,-1.)-- (5.,-1.5);

\draw (6.5,5.)-- (5.0,4.);
\draw (6.5,5.)-- (3.5,4.);
\draw (6.5,4.)-- (5.0,3.);
\draw (6.5,2.)-- (2.,1.);
\draw (6.5,2.)-- (3.5,0.);
\draw (6.5,0.)-- (5.,-1.);

\begin{scriptsize}

\draw [fill=black] (0.5,11.5) circle (2.5pt);
\draw (0.9,11.5) node {$G_0$};
\draw [fill=black] (0.5,11) circle (2.5pt);
\draw (0.9,11) node {$G_1$};
\draw [fill=black] (0.5,10.5) circle (2.5pt);
\draw (0.9,10.5) node {$G_2$};
\draw [fill=black] (0.5,9.) circle (2.5pt);
\draw (0.9,9.) node {$F_1$};
\draw [fill=black] (0.5,8.) circle (2.5pt);
\draw (0.9,8.) node {$F_2$};
\draw [fill=black] (0.5,7.) circle (2.5pt);
\draw (0.1,7.) node {$F_3$};
\draw [fill=black] (0.5,5.) circle (2.5pt);
\draw (0.1,5.) node {$F_4$};

\draw [fill=black] (2.,10) circle (2.5pt);
\draw (2.4,10) node {$G_3$};
\draw [fill=black] (2.,8.) circle (2.5pt);
\draw (2.4,8.) node {$F_5$};
\draw [fill=black] (2.,7.) circle (2.5pt);
\draw (2.4,7.) node {$F_6$};
\draw [fill=black] (2.,6.) circle (2.5pt);
\draw (1.6,6.) node {$F_{11}$};
\draw [fill=black] (2.,5.) circle (2.5pt);
\draw (1.6,5.) node {$F_{14}$};
\draw [fill=black] (2.,4.) circle (2.5pt);
\draw (1.6,4.) node {$F_{18}$};
\draw [fill=black] (2.,1.) circle (2.5pt);
\draw (1.6,1.) node {$F_{23}$};

\draw [fill=black] (3.5,9.5) circle (2.5pt);
\draw (3.9,9.5) node {$G_4$};
\draw [fill=black] (3.5,7.) circle (2.5pt);
\draw (3.9,7.) node {$F_8$};
\draw [fill=black] (3.5,6.) circle (2.5pt);
\draw (3.9,6.) node {$F_9$};
\draw [fill=black] (3.5,5.) circle (2.5pt);
\draw (3.9,5.) node {$F_{17}$};
\draw [fill=black] (3.5,4.) circle (2.5pt);
\draw (3.1,4.) node {$F_{19}$};
\draw [fill=black] (3.5,3.) circle (2.5pt);
\draw (3.9,3.) node {$F_{20}$};
\draw [fill=black] (3.5,1.) circle (2.5pt);
\draw (3.9,1.) node {$F_{25}$};
\draw [fill=black] (3.5,0.) circle (2.5pt);
\draw (3.9,0.) node {$F_{26}$};
\draw [fill=black] (3.5,-0.5) circle (2.5pt);
\draw (3.1,-0.5) node {$F_{27}$};

\draw [fill=black] (5.,6.) circle (2.5pt);
\draw (5.4,6.) node {$F_7$};
\draw [fill=black] (5.,5.) circle (2.5pt);
\draw (5.4,5.) node {$F_{10}$};
\draw [fill=black] (5.,4.) circle (2.5pt);
\draw (5.4,4.) node {$F_{15}$};
\draw [fill=black] (5.,3.) circle (2.5pt);
\draw (5.4,3.) node {$F_{16}$};
\draw [fill=black] (5.,-1.) circle (2.5pt);
\draw (5.4,-1.) node {$F_{28}$};
\draw [fill=black] (5.,-1.5) circle (2.5pt);
\draw (5.4,-1.5) node {$F_{29}$};

\draw [fill=black] (6.5,5.) circle (2.5pt);
\draw (6.9,5.) node {$F_{12}$};
\draw [fill=black] (6.5,4.) circle (2.5pt);
\draw (6.9,4.) node {$F_{13}$};
\draw [fill=black] (6.5,3.) circle (2.5pt);
\draw (6.9,3.) node {$F_{21}$};
\draw [fill=black] (6.5,2.) circle (2.5pt);
\draw (6.9,2.) node {$F_{22}$};
\draw [fill=black] (6.5,0.) circle (2.5pt);
\draw (6.9,0.) node {$F_{24}$};

\end{scriptsize}
\end{tikzpicture}

                \caption{The orbits\label{Picture1}}
                \end{figure}
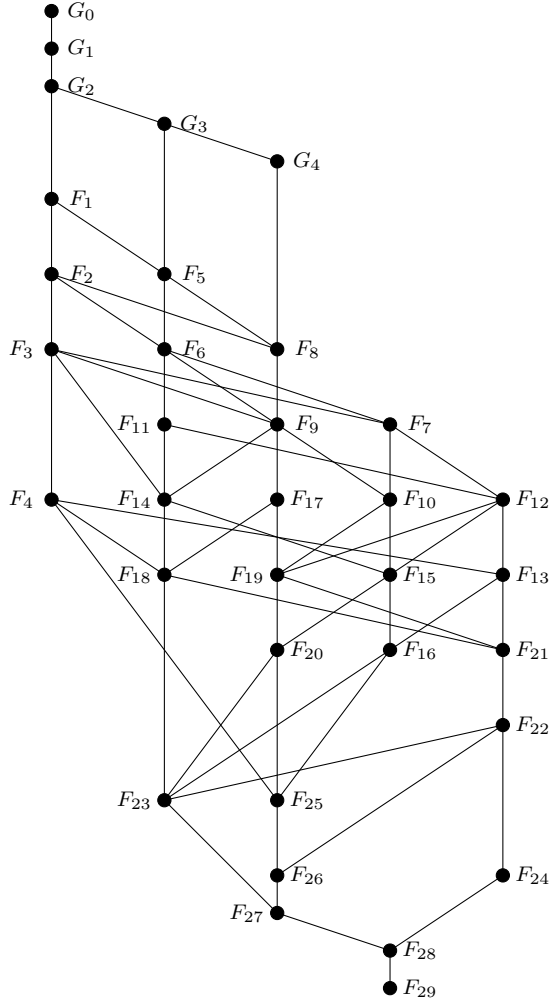

The paper is organized as follows:

In Section \ref{secMR} we introduce the notation and tools used throughout the paper. Afterwards we focus on the space $\Omega_{\C^2}(2,2,2)$. In Section \ref{secreal} we use the list obtained for complex quadratic mappings in $\Omega_{\C^2}(2,2,2)$ to produce the list of real quadratic mappings in $\Omega_{\R^2}(2,2,2)$. In Section \ref{secHD} we describe the spaces of quadratic mappings $\C^2\to\C^n$ and $\R^2\to\R^n$ for $n\geq 4$. In Section \ref{secOCC} we deal with the containment of closures of orbits. Finally in Section \ref{top} we describe all topological types.

\section{Main Result}\label{secMR}

We start by making the following useful definition:

\begin{defi}
Let $F=(f_1,\ldots,f_m):\K^n\rightarrow\K^m$ be a polynomial mapping. We denote by $\dim_a(F)$ the dimension of the affine space spanned by the image of $F$. Moreover we denote by $\dim_q(F)$ the dimension of the linear space spanned by the image of $F^{(2)}$, where $F^{(2)}=(f_1^{(2)},\ldots,f_m^{(2)})$ and $f_i^{(2)}$ is the homogeneous part of $f_i$ of degree $2$. 
\end{defi}

Obviously for a quadratic mapping $F:\K^2\rightarrow\K^m$ we have $\dim_a(F)\leq 5$ and $\dim_q(F)\leq 3$. 

Let us also recall the following:

\begin{defi}
By $GA(n)$ we denote the group of affine transformations of $\K^n$. By $\mathcal{GA}(n,m)$ we denote the group $GA(m) \times GA(n)$ with the product given by formula: $(L_2,R_2)\circ (L_1,R_1)=(L_2\circ L_1, R_1\circ R_2)$. The group $\mathcal{GA}(n,m)$ acts on the set $\Omega_{\K^n}(2,\ldots,2)$ of quadratic polynomial mappings $\K^n\to\K^m$ as follows: $(L,R)F=L\circ F \circ R$. We denote the orbit of $F$ by $O(F)$. We say that $F$ and $G$ are linearly equivalent if there is $\alpha\in \mathcal{GA}(n,m)$ such that $F=\alpha G$, i.e. $F\in O(G)$.
\end{defi}

We denote by $\Stab (F)$ the stabilizer of $F:\K^n\to\K^m$. Note that $\dim \Stab(F)+\dim O(F)=\dim \mathcal{GA}(n,m)=n(n+1)+m(m+1)$. Moreover if $(L,R)\in \Stab(F)$ and the image of $F$ is not contained in an affine subspace of $\K^m$ then $R$ uniquely determines $L$. More generally, we have the following:

\begin{lem}\label{lemstab}
Let $F:\K^n\rightarrow\K^m$ be a polynomial mapping and let $\mathcal{G}$ be the subgroup of $GA(m)$ consisting of transformations which are identity after restricting to the affine space spanned by $F(\K^n)$. If $(L_0,R_0)\in\Stab(F)$ then the set $\{L\in GA(m):\ (L,R_0)\in\Stab(F)\}$ is equal to $L_0\mathcal{G}$, in particular it has the structure of an $m(m-\dim_a(F))$--dimensional affine variety.
\end{lem}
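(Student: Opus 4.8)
The plan is to reduce the condition $(L,R_0)\in\Stab(F)$ to a statement about how $L$ acts on the affine span $A$ of $F(\K^n)$, whose dimension is $\dim_a(F)=:d$. By definition of the action, $(L,R_0)\in\Stab(F)$ means $L\circ F\circ R_0=F$. Since we are given $(L_0,R_0)\in\Stab(F)$, we also have $L_0\circ F\circ R_0=F$, so comparing the two identities gives $L\circ(F\circ R_0)=L_0\circ(F\circ R_0)$; that is, $L$ and $L_0$ agree on the image of $F\circ R_0$. Because $R_0\in GA(n)$ is a bijection of $\K^n$, this image is exactly $F(\K^n)$, so $L$ and $L_0$ agree on $F(\K^n)$.

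Next I would use that two affine maps agreeing on a set agree on its affine hull, so $L$ and $L_0$ agree on all of $A$. Hence $L_0^{-1}\circ L$ restricts to the identity on $A$, i.e. $L_0^{-1}L\in\mathcal{G}$, which gives $L\in L_0\mathcal{G}$. This argument is reversible: if $L=L_0 g$ with $g\in\mathcal{G}$, then $g\circ F=F$ (since $F(\K^n)\subseteq A$ and $g$ fixes $A$ pointwise), whence $L\circ F\circ R_0=L_0\circ g\circ F\circ R_0=L_0\circ F\circ R_0=F$, so $(L,R_0)\in\Stab(F)$. Thus $L_0^{-1}L\in\mathcal{G}$ characterizes membership in the set, and we obtain the desired equality $\{L:(L,R_0)\in\Stab(F)\}=L_0\mathcal{G}$. (Working instead from $F\circ R_0=L_0^{-1}F$ produces the coset $\mathcal{G}L_0$; this agrees with $L_0\mathcal{G}$ because $L_0$ preserves $A$ as a set and therefore normalizes $\mathcal{G}$.)

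It remains to describe $\mathcal{G}$ and its translate $L_0\mathcal{G}$ as an affine variety of the stated dimension. I would choose affine coordinates on $\K^m$ so that $A$ becomes the coordinate subspace $\{x_{d+1}=\dots=x_m=0\}$, by translating a point of $A$ to the origin and applying a linear change of variables. Writing a general element of $GA(m)$ as $x\mapsto Bx+c$, the condition of being the identity on $A$ forces $c=0$ and forces the first $d$ columns of $B$ to be $e_1,\dots,e_d$, while the remaining $m-d$ columns are arbitrary subject only to $\det B\neq0$. This realizes $\mathcal{G}$ as a closed subgroup of the affine algebraic group $GA(m)$, isomorphic as a variety to $\K^{d(m-d)}\times\GL_{m-d}$, of dimension $m(m-d)=m(m-\dim_a(F))$; since left translation by $L_0$ is an automorphism of the variety $GA(m)$, the coset $L_0\mathcal{G}$ is an affine variety of the same dimension. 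The computation is essentially routine, so the only points demanding care are the identification of the image of $F\circ R_0$ with $F(\K^n)$ (which uses only bijectivity of $R_0$) and, in the dimension count, the verification that the translation part $c$ is forced to vanish so that the $m(m-d)$ free matrix entries are not overcounted.
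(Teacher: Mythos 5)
Your proof is correct and follows essentially the same route as the paper: both reduce $(L,R_0)\in\Stab(F)$ via $L\circ F\circ R_0=L_0\circ F\circ R_0$ to the statement that $L_0^{-1}\circ L$ fixes $F(\K^n)$ (hence its affine hull) pointwise, i.e.\ lies in $\mathcal{G}$, with the converse by direct substitution. The only difference is that you write out the coordinate computation identifying $\mathcal{G}$ with $\K^{d(m-d)}\times\GL_{m-d}$ and hence the dimension $m(m-\dim_a(F))$, which the paper leaves implicit; this verification is correct.
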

\begin{proof}
First observe that $\mathcal{G}=\{L\in GA(m):\ (L,\id_{\K^n})\in\Stab(F)\}$. Obviously for $L\in \mathcal{G}$ we have $L_0\circ L\circ F\circ R_0=L_0\circ F\circ R_0=F$, so $(L_0\circ L,R_0)\in\Stab(F)$. On the other hand, if $(L,R_0)\in\Stab(F)$ then $L\circ F\circ R_0=L_0\circ F\circ R_0$, so $L_0^{-1}\circ L\circ F=F$, thus $L_0^{-1}\circ L\in\mathcal{G}$.
\end{proof}

As a direct consequence of the Lemma we obtain the following:

\begin{co}\label{codimo}
Let $F:\K^n\rightarrow\K^m$ be a polynomial mapping and let $G=(F,0,\ldots,0):\K^n\rightarrow\K^s$ be the composition of the standard inclusion $\K^m\rightarrow\K^s$ and $F$. We have $\dim\Stab(G)=\dim\Stab(F)+s^2-m^2-(s-m)\dim_a(F)$ and $\dim O(G)=\dim O(F)+(s-m)(1+\dim_a(F))$.
\end{co}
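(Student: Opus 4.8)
The plan is to reduce everything to Lemma \ref{lemstab} together with the orbit--stabilizer dimension identity $\dim\Stab(H)+\dim O(H)=n(n+1)+k(k+1)$ for a polynomial mapping $H:\K^n\to\K^k$, recalled just before that lemma. First I note that appending $s-m$ zero coordinates does not change the dimension of the affine span of the image, so $\dim_a(G)=\dim_a(F)$. Next, since for both $F$ and $G$ the stabilizer and orbit dimensions add up to a number depending only on the source and target, the two claimed equalities are equivalent: a routine subtraction shows the orbit formula follows from the stabilizer formula and conversely. Hence it suffices to prove $\dim\Stab(G)=\dim\Stab(F)+s^2-m^2-(s-m)\dim_a(F)$.

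To compute $\dim\Stab(G)$ I would view $\Stab(G)$ through the projection $\rho\colon\mathcal{GA}(n,s)\to GA(n)$, $(L,R)\mapsto R$, which is a homomorphism of algebraic groups (up to the reversal in the second factor built into the product on $\mathcal{GA}$). Its restriction to $\Stab(G)$ surjects onto the subgroup $P_G:=\{R\in GA(n):(L,R)\in\Stab(G)\text{ for some }L\}$, and its kernel is $\{(L,\id):L\circ G=G\}$, i.e. the group $\mathcal{G}$ of Lemma \ref{lemstab} (the fibre over $R_0=\id$). That lemma gives $\dim\mathcal{G}=s(s-\dim_a(G))=s(s-\dim_a(F))$, and since the fibres of a group homomorphism are cosets of the kernel they are equidimensional, so $\dim\Stab(G)=\dim P_G+s(s-\dim_a(F))$. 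The identical argument for $F$ yields $\dim\Stab(F)=\dim P_F+m(m-\dim_a(F))$, with $P_F$ defined analogously.

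The crux — and the step I expect to be the only real obstacle — is the claim $P_F=P_G$, after which subtracting the two identities gives $\dim\Stab(G)-\dim\Stab(F)=s(s-\dim_a(F))-m(m-\dim_a(F))=s^2-m^2-(s-m)\dim_a(F)$, as required. The inclusion $P_F\subseteq P_G$ is easy: if $(L,R)\in\Stab(F)$ then $(L\oplus\id_{\K^{s-m}},R)\in\Stab(G)$, since $G=i\circ F$ with $i$ the standard inclusion. For the reverse inclusion, given $(L',R)\in\Stab(G)$ I would push $L'$ down to $\K^m$ by setting $L:=\pi\circ L'\circ i$, where $\pi\colon\K^s\to\K^m$ is the projection onto the first $m$ coordinates; using $L'\circ G\circ R=G$ and $\pi\circ i=\id_{\K^m}$ one checks $L\circ F\circ R=F$. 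The delicate point is that $L$ need not be invertible a priori; however the relation $L\circ F=F\circ R^{-1}$ forces $L$ to restrict to an affine automorphism of the affine span $A_F$ of $F(\K^n)$, and any such automorphism of $A_F$ extends to an element of $GA(m)$ still satisfying $L\circ F\circ R=F$. This produces the required $(L,R)\in\Stab(F)$ and yields $P_G\subseteq P_F$. Finally, converting the stabilizer formula into the orbit formula via the orbit--stabilizer identity gives $\dim O(G)-\dim O(F)=(s-m)(1+\dim_a(F))$.
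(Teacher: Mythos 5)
Your proposal is correct and follows essentially the argument the paper intends when it calls the corollary ``a direct consequence'' of Lemma \ref{lemstab}: the projection $(L,R)\mapsto R$ has the same image $P_F=P_G$ for $F$ and $G$, the fibres are cosets of $\mathcal{G}$ of dimension $m(m-\dim_a(F))$ resp.\ $s(s-\dim_a(F))$ (using $\dim_a(G)=\dim_a(F)$), and the orbit formula then follows from $\dim\Stab+\dim O=n(n+1)+m(m+1)$. You moreover supply the one detail the paper leaves implicit --- that for $(L',R)\in\Stab(G)$ the compressed map $\pi\circ L'\circ i$ restricts to an affine automorphism of the span of $F(\K^n)$ and hence extends to an invertible element of $GA(m)$ --- which is handled correctly.
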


We will also use the following:

\begin{defi}\label{defCF}
Let $F\in\Omega_{\K^2}(2,2,2)$. We denote by $\mu(F)$ the topological degree of $F$, i.e. the number of points in $F^{-1}(F(x))$ for a generic $x\in\K^2$. For $\mu(F)=1$ we define the \emph{self-intersection curve} as the set 

$$SI_F=\{x\in\K^2:\ F^{-1}(F(x)) \text{ is not a simple point}\}$$ 
\end{defi}

Obviously in Definition \ref{defCF} we take $F^{-1}(F(x))$ with a scheme structure. We can also introduce a scheme structure on $SI_F$ in the following way, which we also use to actually compute $SI_F$: let $F=(f_1,f_2,f_3)$ and let $\pi:\K[x_1,y_1]\rightarrow\K[x_1,y_1,x_2,y_2]$ be the inclusion corresponding to the projection on the first coordinate in $\K^2\times\K^2$. Let $I$ be the ideal generated by $f_i(x_1,y_1)-f_i(x_2,y_2)$ for $i=1,2,3$. We define $SI_F$ as the scheme corresponding to the ideal $\pi^{-1}(I:(x_1-x_2,y_1-y_2))$.

For $F=(f_1,f_2,f_3)\in\Omega_{\K^2}(2,2,2)$ we define:
$$
J_1(F)=
\left| \begin{array}{cc}
\pa{f_2}{x} & \pa{f_2}{y}\\
\pa{f_3}{x} & \pa{f_3}{y}
\end{array} \right|,
\quad
J_2(F)=
\left| \begin{array}{cc}
\pa{f_3}{x} & \pa{f_3}{y}\\
\pa{f_1}{x} & \pa{f_1}{y}
\end{array} \right|,
\quad
J_3(F)=
\left| \begin{array}{cc}
\pa{f_1}{x} & \pa{f_1}{y}\\
\pa{f_2}{x} & \pa{f_2}{y}
\end{array} \right|.
$$ 

\begin{defi}
We call the set $C(F)=\{J_1(F)=J_2(F)=J_3(F)=0\}$ the \emph{critical set} of $F$, however when considering multiplicities of points we refer rather to the scheme $V(J_1(F),J_2(F),J_3(F))$. We also use the \emph{critical space} $Cs(F)=\langle J_1(F),J_2(F),J_3(F)\rangle$, which is the vector space generated over $\K$ by $J_i(F)$ seen as vectors in the six dimensional space of quadratic polynomials.
\end{defi}

Let $L\in GA(3)$, let $P\in GL(3)$ be the linear part of $L$ and let $C$ be the cofactor matrix of $P$, i.e. $C=(\det(P)P^{-1})^T$. It is easy to check that we have:

$$\left[\begin{matrix}J_1(L\circ F)\\J_2(L\circ F)\\J_3(L\circ F)\end{matrix}\right]
=C\left[\begin{matrix}J_1(F)\\J_2(F)\\J_3(F)\end{matrix}\right]$$

In particular we obtain that the spaces $Cs(F)$ and $Cs(L\circ F)$ are equal, so composition with $L$ preserves the critical space of $F$. Consequently if $(L,R)\in \Stab(F)$ then $R$ also preserves $Cs(F)$.

From now on to the end of this section we will focus on the space $\Omega_{\C^2}(2,2,2)$.

Let us now prove the following:

\begin{theorem}\label{thf1}
Let $F\in \Omega_{\C^2}(2,2,2)$. The following conditions are equivalent:
\begin{itemize}
\item[(1)] $F$ has three singular points.

\item[(2)] $O(F)$ is an open dense subset of $\Omega_{\C^2}(2,2,2)$.

\item[(3)] $O(F)=O(F_1)$, i.e. $F$ is linearly equivalent to $F_1=(x^2+y,y^2+x,xy)$.
\end{itemize}
Moreover, for such $F$ the set $SI_F$ is a union of three lines in a general position and $\dim O(F)=18$.

\end{theorem}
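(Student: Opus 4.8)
The plan is to prove the four implications $(3)\Rightarrow(1)$, $(3)\Rightarrow(2)$, $(2)\Rightarrow(3)$ and $(1)\Rightarrow(3)$; together they yield the three-way equivalence, and the ``Moreover'' part then follows from $(3)$ because the number of singular points, the configuration of $SI_F$, and $\dim O(F)$ are all invariants of the linear equivalence class. For $(3)\Rightarrow(1)$ I would simply compute the critical set of $F_1=(x^2+y,y^2+x,xy)$: the minors are $J_1=x-2y^2$, $J_2=y-2x^2$, $J_3=4xy-1$, and eliminating gives $8x^3=1$, so the common zeros are the three distinct points $(\varepsilon/2,\varepsilon^2/2)$ with $\varepsilon^3=1$. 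Hence $F_1$ has three singular points, and since being a critical point is preserved by linear equivalence, so does every $F$ with $O(F)=O(F_1)$. The same explicit normal form lets me read $SI_{F_1}$ off the ideal of Definition~\ref{defCF} and confirm it is a union of three lines $y+\varepsilon x-\varepsilon^2=0$ in general position.

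For the density statements I would argue by dimension. Since $\dim\mathcal{GA}(2,3)=12+6=18$ equals $\dim\Omega_{\C^2}(2,2,2)=18$, the identity $\dim\Stab(F_1)+\dim O(F_1)=18$ shows that $\dim O(F_1)=18$ exactly when $\Stab(F_1)$ is finite. I would verify finiteness directly: as $\dim_a(F_1)=3$ the image spans $\C^3$ affinely, so by the remark preceding Lemma~\ref{lemstab} the source part $R$ of a stabilizing pair determines the target part $L$; writing $R\in GA(2)$ and matching the homogeneous parts of $L\circ F_1\circ R=F_1$ degree by degree reduces $\Stab(F_1)$ to the solution set of an explicit finite system in the entries of $R$, which one checks is $0$-dimensional. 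This gives $\dim O(F_1)=18$, and since $O(F_1)$ is a locally closed $18$-dimensional subset of the irreducible $18$-dimensional space $\Omega_{\C^2}(2,2,2)$, its closure is the whole space and it is open in its closure, hence open and dense; this proves $(3)\Rightarrow(2)$. For $(2)\Rightarrow(3)$, if $O(F)$ is also open dense then $O(F)$ and $O(F_1)$ are two dense open subsets of an irreducible space, so they meet; distinct orbits being disjoint, $O(F)=O(F_1)$.

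The substantive direction is $(1)\Rightarrow(3)$. Given $F$ with three distinct singular points I would first show $\dim_q(F)=3$: if the quadratic parts spanned a space of dimension at most $2$, a target linear map reduces $F^{(2)}$ to a short list of normal forms for the pencil of binary quadrics, and one checks the critical scheme is then empty, positive-dimensional, or a single (possibly non-reduced) point, never three distinct reduced points. Once $\dim_q(F)=3$ I would apply a target linear isomorphism carrying $(f_1^{(2)},f_2^{(2)},f_3^{(2)})$ to the standard basis $(x^2,y^2,xy)$ and a target translation killing the constants, reaching $F=(x^2+\ell_1,y^2+\ell_2,xy+\ell_3)$ with the $\ell_i$ homogeneous linear. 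The residual symmetries are the source maps $R\in GA(2)$ with the forced target compensation on the quadratic parts (the inverse symmetric-square matrix of the linear part of $R$) together with free target translations. In this stratum the critical scheme $V(J_1,J_2,J_3)$ has length exactly $3$, so the hypothesis means the three critical points are reduced and in general (non-collinear) position; moving them to the three standard points $(\varepsilon/2,\varepsilon^2/2)$ by the residual source affine freedom then forces $(\ell_1,\ell_2,\ell_3)=(y,x,0)$, that is $F=F_1$.

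The main obstacle is precisely this last normalization. Two points require care: ruling out the $\dim_q(F)\le 2$ strata cleanly, so that three distinct singular points genuinely forces the quadratic parts to be independent; and showing that within the $\dim_q(F)=3$ stratum the three distinct critical points cannot be collinear, so that the residual affine group acts transitively enough to reach the single normal form $F_1$ rather than several inequivalent configurations. The verification that $\Stab(F_1)$ is finite is mechanical but must be carried out to secure $\dim O(F_1)=18$; the remaining work is bookkeeping of how source transformations are compensated on the target.
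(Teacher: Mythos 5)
Your proposal is correct in substance, but it routes the proof quite differently from the paper. The paper argues $(1)\Rightarrow(2)\Rightarrow(3)\Rightarrow(1)$, doing all the work in $(1)\Rightarrow(2)$ for an \emph{arbitrary} $F$ with three singular points: any $(L,R)\in\Stab(F)$ has $R$ permuting the three critical points, these are non-collinear, so there are at most six candidates for $R$, and since the image of $F$ affinely spans $\C^3$ (otherwise the $J_i(F)$ would be proportional) each $R$ determines $L$; hence $\#\Stab(F)\leq 6$, $\dim O(F)=18$, and $O(F)$ is open and dense. Then $(2)\Rightarrow(3)$ is free: $F_1$ itself has three singular points, so $O(F_1)$ is also open dense, the two orbits meet, hence coincide. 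This completely avoids the normal-form reduction you place at the heart of $(1)\Rightarrow(3)$ (the paper's $\Omega_1/\Omega_2$ normalization machinery only enters later, for Theorems \ref{thf2}--\ref{thf4}). Your route does the heavy lifting in $(1)\Rightarrow(3)$ and needs stabilizer finiteness only for the single representative $F_1$. Your endgame computation is genuinely correct: on the stratum $(x^2+\ell_1,y^2+\ell_2,xy+\ell_3)$ with $\ell_i=a_ix+b_iy$, imposing $J_3(\varepsilon/2,\varepsilon^2/2)=0$ for all $\varepsilon^3=1$ yields $1+a_1b_2-a_2b_1+b_2\varepsilon+a_1\varepsilon^2=0$, and invertibility of the Vandermonde system in $1,\varepsilon,\varepsilon^2$ forces $a_1=b_2=0$, $a_2b_1=1$; then $J_1=0$ at the three points forces $a_2=1$, $a_3=b_3=0$, hence $b_1=1$ and $F=F_1$, with $J_2$ vanishing automatically. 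Note that this only uses that the critical scheme \emph{contains} the three standard points, so your unproved claim that the critical scheme has length exactly $3$ on the stratum (which is true --- the $J_i$ are the maximal minors of the $3\times 2$ Jacobian matrix of affine-linear forms) can simply be dropped.

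The two points you flag as obstacles do need to be closed, and both close easily. Non-collinearity is exactly what the paper proves first: each $V(J_i(F))$ is a (possibly degenerate) conic, and a conic through three collinear points must contain the line through them by B\'ezout, so collinear critical points would force all three $V(J_i(F))$ to contain a common line, making $C(F)$ infinite. In your normalized stratum it is even quicker: the top forms of $J_1,J_2,J_3$ are $-2y^2$, $-2x^2$, $4xy$, which have no common zero at infinity, so $C(F)$ is finite, while each $J_i$ restricted to a line through three critical points would be a univariate quadratic with three roots, hence identically zero --- a contradiction. (A similar restriction-to-a-line argument also cleans up your $\dim_q(F)\leq 2$ exclusion: there $J_1,J_2$ are affine-linear, so three distinct critical points force them to vanish identically or on a line, and then $J_3$ vanishes on that line as well.) Finally, for $\Stab(F_1)$ you need not grind through coefficient matching: $R$ must permute the three explicit non-collinear critical points $(\varepsilon/2,\varepsilon^2/2)$, giving at most six choices, and $\dim_a(F_1)=3$ determines $L$; in fact the stabilizer has exactly order six. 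With these fills your argument is complete; the trade-off is that the paper's proof is shorter and essentially computation-free, while yours yields the sharper explicit statement that any map in the normalized stratum whose critical scheme contains three prescribed non-collinear points is, after moving them into standard position, equal to $F_1$.
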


\begin{proof}
$(1)\Rightarrow (2)$ We will show that $\Stab (F)$ is finite. If $(L,R)\in \Stab(F)$ then $R$ must preserve the critical set. Note that the singular points are not co-linear. Indeed, they are the zero set of polynomials $J_i(F)$. Note that $V(J_i(F))$ is a smooth conic or two lines or one line. If the singular points are co-linear then the first possibility is excluded. Hence $V(J_i(F))$ is one or two lines and the three singular points must lie on exactly one line included in $V(J_i(F))$. This means that all three $V(J_i(F))$ contain a common line, so the set of critical points of $F$ contains a line - a contradiction.

Now it is easy to see that there is only a finite number (at most six) of such linear mappings $R$. Moreover, the set $F(\C^2)$ is not contained in a hyperplane because if it were then all three $J_i(F)$ would be proportional. This implies that $L$ is uniquely determined by $R$ and $\#\Stab (F)\le 6$. Since $\Stab (F)$ is finite we know that $\dim O(F)=\dim \Omega_{\C^2}(2,2,2)$ and $O(F)$ is a maximal orbit, in particular it is open and dense.

$(2)\Rightarrow (3)$ Note that the mapping $F_1$ has three singular points: $(\frac{\varepsilon}{2},\frac{\varepsilon^2}{2})$ for $\varepsilon^3=1$. Hence $O(F_1)$ is open and dense, in particular $O(F)\cap O(F_1)\neq \emptyset$.

$(3)\Rightarrow (1)$  Obvious since $F_1$ has three singular points.

To prove the last remark observe that $SI_{F_1}$ is a union of three lines: $x+\varepsilon^2 y-\varepsilon=0$ for $\varepsilon^3=1$.

\end{proof}

The mapping $F_1$ has a quite nice geometry which is shared with mappings from $O(F_1)$. The curve $SI_{F_1}$ consists of three lines $x+\varepsilon^2 y-\varepsilon=0$ for $\varepsilon^3=1$ forming a triangle, the singular points $(\frac{\varepsilon}{2},\frac{\varepsilon^2}{2})$ are midpoints of the triangles edges. After restricting $F_1$ to one of the edges we obtain the mapping 
$$(x+\frac{\varepsilon}{2},-\varepsilon x+\frac{\varepsilon^2}{2})\mapsto(x^2+\frac{3\varepsilon^2}{4},\varepsilon^2x^2+\frac{3\varepsilon}{4},-\varepsilon x^2+\frac{1}{4}).$$
Obviously this restriction is equivalent to the endomorphism $x\mapsto x^2$ of a line. The vertices of the triangle $(-\varepsilon,-\varepsilon^2)$ are the preimage of the unique triple point of $F_1$.

Note that it follows from the complete classification that also the following conditions are equivalent:
\begin{itemize}
\item $F$ has three singular points,
\item $SI_F$ is a union of three lines,
\item $\mu(F)=1$ and $F$ has a (unique) triple point.
\end{itemize}

Before we state the next theorem we will do some preparatory work.

Let $F=(f_1,f_2,f_3)\in\Omega_{\C^2}(2,2,2)$ and let $f_i=a_ix^2+b_ixy+c_iy^2+d_ix+e_iy+g_i$ for $i=1,2,3$. Moreover, let
$$\Phi_1(F)=\left[\begin{matrix}a_1&c_1&b_1\\a_2&c_2&b_2\\a_3&c_3&b_3\end{matrix}\right].$$
First we will consider the case when $\det \Phi_1(F)\neq 0$, i.e. $\dim_q(F)=3$. Geometrically it means that $O(F)\cap\Omega_{\C^2}(2,2,1)=\emptyset$. We will show that in this case either $(x^2,y^2,xy)\in O(F)$ or $(x^2+y,y^2+x,xy+ax+by)\in O(F)$ for some $a,b\in\C$. Observe that if $L$ is the mapping associated with $\Phi_1^{-1}(F)$ then $L\circ F=(x^2+d_1'x+e_1'y+g_1',y^2+d_2'x+e_2'y+g_2',xy+d_3'x+e_3'y+g_3')$, where $(d_1',d_2',d_3')=\Phi_1^{-1}(F)(d_1,d_2,d_3)$ and $(e_1',e_2',e_3')=\Phi_1^{-1}(F)(e_1,e_2,e_3)$. Moreover, if $R(x,y)=(x-\frac{d_1'}{2},y-\frac{e_2'}{2})$ and $L'(x,y,z)=(x-g_1'+(\frac{d_1'}{2})^2+\frac{d_1'e_2'}{2},y-g_2'+(\frac{e_2'}{2})^2+\frac{d_1'e_2'}{2},z+\frac{-d_1'e_2'+d_1'd_3'+e_2'e_3'}{2}-g_3')$ then $F'=L'\circ L\circ F\circ R=(x^2+e_1''y,y^2+d_2''x,xy+d_3''x+e_3''y)$.

Let $\Omega_1=\{F\in\Omega_{\C^2}(2,2,2):\ F=(x^2+e_1''y,y^2+d_2''x,xy+d_3''x+e_3''y)\}$. We constructed above a mapping $\Theta:\Omega_{\C^2}(2,2,2)\setminus V(\det \Phi_1)\ni F\mapsto F'\in\Omega_1$ which is compatible with the action of $\mathcal{GA}(2,3)$ in the sense that $\Theta^{-1}(F')\subset O(F')$. In fact we constructed an isomorphism $\Omega_{\C^2}(2,2,2)\setminus V(\det \Phi_1)\cong\Omega_1\times GL(3)\times\C^5$ and the mapping $\Theta$ may be viewed as projection.

Next, for $F\in\Omega_1\setminus V(e_1''d_2'')$ we take $R'(x,y)=(({e_1''}^2d_2'')^{\frac{1}{3}}x,(e_1''{d_2''}^2)^{\frac{1}{3}}y)$ and $L''(x,y,z)=(({e_1''}^2d_2'')^{\frac{-2}{3}}x,(e_1''{d_2''}^2)^{\frac{-2}{3}}y,(e_1''d_2'')^{-1}z)$ and obtain $L''\circ F\circ R'=(x^2+y,y^2+x,xy+ax+by)$, where $a=d_3''(e_1''{d_2''}^2)^{\frac{-1}{3}}$ and $b=e_3''({e_1''}^2d_2'')^{\frac{-1}{3}}$. In particular we obtain a mapping $\Theta_1:\Omega_1\setminus V(e_1''d_2'')\rightarrow\Omega_2$ compatible with the action of $\mathcal{GA}(2,3)$, where $\Omega_2=\{F\in\Omega_{\C^2}(2,2,2):\ F=(x^2+y,y^2+x,xy+ax+by)\}$.

Finally, consider the mapping $\Theta_{\alpha,\beta}:(F(x,y))\mapsto F(x+\alpha,y+\beta)$ with $\alpha\beta\neq 1$. One can calculate that for $F\in\Omega_1$ we have $\Theta\circ\Theta_{\alpha,\beta}(F)=
(x^2+(e_1''+\alpha^3d_2''-2\alpha^2d_3''-2\alpha e_3'')/(1-\alpha\beta)^2 y,
y^2+(\beta^3e_1''+d_2''-2\beta d_3''-2\beta^2e_3'')/(1-\alpha\beta)^2 x,xy+\ldots)$. In particular for every $F\in\Omega_1\setminus V(e_1'',d_2'',d_3'',e_3'')$, i.e. every $F\in\Omega_1$ except $(x^2,y^2,xy)$, there are $\alpha,\beta$ such that $\Theta\circ\Theta_{\alpha,\beta}(F)\in\Omega_1\setminus V(e_1''d_2'')$. Consequently we may construct a mapping $\Omega_1\setminus V(e_1'',d_2'',d_3'',e_3'')\rightarrow \Omega_2$ compatible with the action of $\mathcal{GA}(2,3)$. In particular we obtain:

$$\Omega_{\C^2}(2,2,2)=\mathcal{GA}(2,3)\Omega_{\C^2}(2,2,1)\cup\mathcal{GA}(2,3)\Omega_2\cup O(x^2,y^2,xy)$$

Now we can prove the following:

\begin{theorem}\label{thf2}
Let $F\in \Omega_{\C^2}(2,2,2)$. The following conditions are equivalent:
\begin{itemize}

\item[(1)] $F$ has two singular points.

\item[(2)] $F'=(x^2+y, y^2+x, xy+ax+by)\in O(F)$ for some $a,b\in\C$ such that $2a^3 + a^2b^2 + \frac{9}{2} a b + 2 b^3 - \frac{27}{16}=0$ and  $(a,b)\not=(-\frac{3}{2}\epsilon, -\frac{3}{2}\epsilon^2)$, where $\epsilon^3=1.$

\item[(3)] $O(F)=O(F_2)$, i.e. $F$ is linearly equivalent to $F_2=(x^2+y,y^2+x,xy+\frac{1}{2}x+\frac{1}{2}y)$.
\end{itemize}
Moreover, for such $F$ the set $SI_F$ is a union of two transversal lines and $\dim O(F)=17$.
\end{theorem}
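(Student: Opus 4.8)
My plan is to run the cycle $(3)\Rightarrow(1)\Rightarrow(2)\Rightarrow(3)$, the computational core being an analysis of the critical scheme of the normal form $F'=(x^2+y,y^2+x,xy+ax+by)$ supplied by the reduction preceding the theorem. The implication $(3)\Rightarrow(1)$ is immediate, since one checks directly that $C(F_2)=\{(\frac{1}{2},\frac{1}{2}),(-\frac{1}{2},-\frac{1}{2})\}$.

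For $(1)\Rightarrow(2)$ I would first show that having exactly two isolated singular points forces $\dim_q(F)=3$; recall that the cardinality of $C(F)$ is a $\mathcal{GA}(2,3)$-invariant, as composing with $L$ leaves $C(F)$ unchanged while composing with $R$ merely permutes it. If instead $\dim_q(F)\le 2$ then, after an affine change of coordinates, $f_3$ is affine and we may take $f_3=x$; then $J_1=-\pa{f_2}{y}$ and $J_2=\pa{f_1}{y}$ are affine-linear while $J_3$ is quadratic. If $J_1,J_2$ are independent then $V(J_1,J_2)$ is a single point, so $\#C(F)\le 1$; if they are proportional then both $\pa{f_1}{y}$ and $\pa{f_2}{y}$ vanish on their common zero line, whence $J_3$ vanishes there too and $C(F)$ contains a line. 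In neither case are there exactly two isolated points, so $\dim_q(F)=3$. By the reduction $F$ is then equivalent either to $(x^2,y^2,xy)$, whose critical scheme is $V(x^2,y^2,xy)$ and hence a single point, or to some $F'$ as above. For $F'$ the two columns of the Jacobian are orthogonal to $(J_1,J_2,J_3)$, giving the identities $2xJ_1+J_2+(y+a)J_3=0$ and $J_1+2yJ_2+(x+b)J_3=0$; since $(-b,-a)\in V(J_1,J_2)$ always, a direct substitution identifies the singular points of $F'$ with the distinct roots of
\[
P(u)=8u^3-16bu^2+(8b^2-4a)u+(4ab-1).
\]
I would finish by computing $\det$-style that $\mathrm{disc}(P)=1024\big(2a^3+a^2b^2+\frac{9}{2}ab+2b^3-\frac{27}{16}\big)$ and noting that $P$ is a perfect cube precisely at $(a,b)=(-\frac{3}{2}\epsilon,-\frac{3}{2}\epsilon^2)$; thus $F'$ has exactly two singular points exactly under condition (2).

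This same computation yields $(2)\Rightarrow(1)$ for free, so the substantive step is $(2)\Rightarrow(3)$: every $F'$ with $(a,b)$ on the curve $\mathcal{C}=\{2a^3+a^2b^2+\frac{9}{2}ab+2b^3-\frac{27}{16}=0\}$ minus the three excluded points lies in one orbit. My plan is to exploit the parameter maps $\Theta_1\circ\Theta\circ\Theta_{\alpha,\beta}$ assembled in the preparatory section: since $\Theta^{-1}(F')\subset O(F')$ (and likewise for $\Theta_1$, while $\Theta_{\alpha,\beta}$ is a source transformation), they preserve $\mathcal{GA}(2,3)$-orbits and hence preserve $\mathcal{C}$, and they restrict to the identity on $\Omega_2$ at $(\alpha,\beta)=(0,0)$. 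Using the explicit formula for $\Theta\circ\Theta_{\alpha,\beta}$ one computes the induced map $(a,b)\mapsto(a',b')$ and its differential at $(\alpha,\beta)=(0,0)$; I expect this differential to be nonzero along $\mathcal{C}$, so that the orbit of any point of $\mathcal{C}$ away from the three points is open, hence (the substitution $s=a+b,\ p=ab$ exhibits $\mathcal C$ as irreducible, so $\mathcal C$ minus three points is connected) equal to all of it. As $(\frac{1}{2},\frac{1}{2})\in\mathcal{C}$ gives $F_2$, this single orbit is $O(F_2)$. The main obstacle is exactly this transitivity, i.e. checking that the differential does not vanish identically along $\mathcal{C}$; should it prove unwieldy it can be replaced by the softer observation that the orbit-type is a constructible, generically constant function on the irreducible curve $\mathcal{C}$, together with a verification that no second two-singular-point orbit meets $\mathcal{C}$.

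For the final assertion I would compute $SI_{F_2}$ directly from the scheme definition, saturating the difference ideal by $(x_1-x_2,y_1-y_2)$ and eliminating $(x_2,y_2)$, obtaining $V((x-y)^2(x+y-1))$, i.e. set-theoretically the two transversal lines $x=y$ and $x+y=1$; applying the source isomorphism shows $SI_F$ is a union of two transversal lines for every $F\in O(F_2)$. Finally, $\dim O(F_2)=17$: since $\dim_a(F_2)=3$, Lemma \ref{lemstab} shows that on the stabilizer $L$ is determined by $R$, so $\Stab(F_2)$ embeds in the $R\in GA(2)$ preserving both $\{(\frac{1}{2},\frac{1}{2}),(-\frac{1}{2},-\frac{1}{2})\}$ and $Cs(F_2)$. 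As $O(F_2)$ is distinct from the dense orbit $O(F_1)$ it cannot be dense, forcing $\dim O(F_2)<18$, and a direct computation of the infinitesimal action at $F_2$ gives $\dim\Stab(F_2)=1$, whence $\dim O(F_2)=18-1=17$.
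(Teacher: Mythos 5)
Your cycle and the computational core of $(1)\Leftrightarrow(2)$ are sound and essentially agree with the paper: your cubic $P(u)$ is, under the shift $u=x+b$, exactly the paper's cubic $8x^3+8bx^2-4ax-1$ cutting out $C(F')$ on $\{4xy=1\}$; your discriminant value $1024\bigl(2a^3+a^2b^2+\frac{9}{2}ab+2b^3-\frac{27}{16}\bigr)$ is correct, the triple-root locus is precisely the three excluded points, and your $\dim_q(F)=3$ case analysis (with $f_3=x$, $J_1,J_2$ affine-linear) is a fine variant of the paper's argument. The genuine gap is in $(2)\Rightarrow(3)$ — transitivity along $\mathcal{C}=V_0$ — and you have flagged it yourself without closing it. Your primary route (nonvanishing of the differential of the induced $(a,b)$-flow at $(\alpha,\beta)=(0,0)$ along $\mathcal{C}$) is stated as an expectation, not verified. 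Your fallback is logically insufficient as stated: a constructible equivalence relation on an irreducible curve need \emph{not} be generically constant, since every class could be finite (equality is itself a constructible equivalence relation); to conclude that one class is cofinite you need an input guaranteeing that at least one orbit trace on $\mathcal{C}$ is infinite, i.e.\ a dimension bound. Moreover, even a cofinite generic class leaves finitely many points of $\mathcal{C}$ which could a priori lie in other two-singular-point orbits, and your proposed ``verification that no second two-singular-point orbit meets $\mathcal{C}$'' is precisely the statement to be proved, with no method offered.

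The paper closes exactly this gap with a computation absent from your plan: for \emph{every} $F'$ on the curve (parameterized by the double root $\frac{\alpha}{2}$ of the cubic, so $a=-(\alpha^3+2)/2\alpha$, $b=-(2\alpha^3+1)/2\alpha^2$) it computes $SI_{F'}$ explicitly and factors it as $(\alpha x+y-\frac{\alpha^3+1}{2\alpha})^2(\frac{1}{\alpha^2}x+y-\frac{\alpha^6+1}{2\alpha^4})=0$, a double line through both singular points and a simple line through the simple one; any $(L,R)\in\Stab(F')$ must preserve this configuration with multiplicities, which in adapted coordinates forces $R=(tx_1,y_1)$, $t\in\C^*$, hence $\dim\Stab(F')\le 1$ and, since $F'$ is not generic, $\dim O(F')=17$. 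Via the trivialization $\Omega_{\C^2}(2,2,2)\setminus V(\det\Phi_1)\cong\Omega_1\times GL(3)\times\C^5$ this dimension forces $O(F)\cap\Omega_2$ to be a curve, hence open and dense in the irreducible $V_0$; the same holds for $O(F_2)\cap\Omega_2$, and two dense constructible subsets of an irreducible curve meet, so $O(F)=O(F_2)$. Note this stabilizer bound is established for all $F$ satisfying (1), not only at $F_2$ as in your final paragraph — that uniformity is what makes the density argument work, and it simultaneously yields $\dim O(F)=17$ and the ``union of two transversal lines'' claim (set-theoretically) for the whole orbit. One further small caution: your irreducibility argument for $\mathcal{C}$ via $s=a+b$, $p=ab$ only shows the quotient curve is rational; the $2{:}1$ cover could a priori split into two components, whereas the paper's rational parametrization of $V_0$ by $\alpha\in\C^*$ settles irreducibility directly.
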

\begin{proof}
$(1)\Rightarrow (2)$ First we show that $\dim_q(F)=3$. Suppose that $\dim_q(F)=2$, then we can find $F''=(f_1,f_2,f_3)\in O(F)\cap\Omega_{\C^2}(2,2,1)$. Observe that $J_1(F)$ and $J_2(F)$ have degree at most $1$. Since $F''$ has two singular points both $V(J_1(F))$ and $V(J_2(F))$ must contain the line through those points, i.e. be this line or $\C^2$. It follows that either $f_3$ is constant or $V(J_3(F))$ also contains a line through the two points. In both cases $C(F)$ is not finite - a contradiction.

Now observe that $F\notin O(x^2,y^2,xy)$ since $C(x^2,y^2,xy)=\{(0,0)\}$. Hence we have $F\in\mathcal{GA}(2,3)\Omega_2$, in particular $F'=(x^2+y, y^2+x, xy+ax+by)\in O(F)$ for some $a,b\in\C$.

We have $J_3(F')=4xy-1$, $-J_2(F')=2x^2+2xb-y-a$, $J_1(F')=2y^2+2ya-x-b$. Hence $C(F')$ is given by the equations $x^3 + x^2b - \frac{1}{2}xa - \frac{1}{8}=0$, $y=\frac{1}{4x}$.

Note that the mapping $F'$ has at most two singular points precisely for those $(a,b)$ for which the discriminant of polynomials
$x^3 + x^2b - \frac{1}{2}xa - \frac{1}{8}$ and $3x^2+2xb-\frac{1}{2}a$ vanishes.
The set of such mappings in $\Omega_2$ is given by the equation
$2a^3 + a^2b^2 + \frac{9}{2}ab + 2b^3 - \frac{27}{16}=0$. Moreover, only for $(a,b)=(-\frac{3}{2}\epsilon, -\frac{3}{2}\epsilon^2)$ the mapping $F'$ has exactly one singular point.

$(2)\Rightarrow (3)$ Let $\frac{\alpha}{2}$ be a double root of the equation $x^3 + x^2b - \frac{1}{2}xa - \frac{1}{8}$, obviously $\frac{1}{2\alpha^2}$ is the other root. We have $a=-(\alpha^3+2)/2\alpha$ and $b=-(2\alpha^3+1)/2\alpha^2$. The singular points of $F'$ are $(\frac{\alpha}{2},\frac{1}{2\alpha})$ with multiplicity $2$ and $(\frac{1}{2\alpha^2},\frac{\alpha^2}{2})$. By direct computation we see that the self-intersection curve of the mapping $(x^2+y, y^2+x, xy+ax+by)$ is given by the equation
$$x^3 -2x^2ya -2x^2a^2 -x^2b -2 xy^2b -4xyab +3xy-2xa^2b +xa -xb^2 $$
$$+y^3 -y^2a -2y^2b^2 -ya^2 -2 yab^2 +yb +a^3 + 3ab +b^3 +1=0.$$

This equation can be also expressed as $$(\alpha x+y-\frac{\alpha^3+1}{2\alpha})^2(\frac{1}{\alpha^2}x+y-\frac{\alpha^6+1}{2\alpha^4})=0.$$ Observe, that the double line passes through both singular points and the simple line passes through the simple singular point. This is expected since we have a degeneration of the general case in which two edges of a triangle are brought together. Now if $(L,R)\in \Stab(F')$ then $R$ transforms $SI_{F'}$ onto $SI_{F'}$ and singular points into singular points and preserves their multiplicity. Hence if we choose a system of coordinates such that $SI_{F'}$ is given by $x_1^2y_1=0$ then $R(x_1,y_1)=(tx_1,y_1)$, $t\in\C^*$. Thus $\dim\Stab(F')\le 1$ and since $F'$ is not generic $\dim\Stab(F)=\dim\Stab(F')=1$. Hence $\dim O(F)=17$.

Now observe that since $F'\in O(F)\cap \Omega_2$ the intersection $O(F)\cap \Omega_2$ is a curve. Moreover the intersection is contained in the irreducible curve $V_0=V(2a^3 + a^2b^2 + \frac{9}{2}ab + 2b^3 - \frac{27}{16})$. So $O(F)\cap \Omega_2$ is open and dense in $V_0$. Similarly $O(F_2)\cap \Omega_2$ is open and dense in $V_0$, so $O(F)=O(F_2)$.

$(3)\Rightarrow (1)$ Obvious.

\end{proof}

Similarly as after Theorem \ref{thf1} we may add an additional characterization of mappings in $O(F_2)$ which follows from the complete classification rather then from an explicit proof. We have that the following conditions are equivalent:
\begin{itemize}
\item $C(F)$ has two points (one ordinary and one double),
\item $SI_F$ consists of two lines (one ordinary and one double),
\item $\mu(F)=1$ and $F^{-1}(F(x))$ consists of a double point and an ordinary point for some (unique) $x\in\C^2$.
\end{itemize}

Furthermore we have the following:

\begin{theorem}\label{thf3}
Let $F\in \Omega_{\C^2}(2,2,2)$. The following conditions are equivalent:
\begin{itemize}
\item[(1)] $C(F)$ is a multiple (triple) point and $\mu(F)=1$.

\item[(2)] $F'=(x^2+y, y^2+x, xy+ax+by)\in O(F)$ for $a=-\frac{3}{2}\epsilon$ and $b=-\frac{3}{2}\epsilon^2$, where $\epsilon^3=1.$

\item[(3)] $O(F)=O(F_3)$, i.e. $F$ is linearly equivalent to $F_3=(x^2,y^2+x,xy)$.

\end{itemize}
Moreover, for such $F$ the set $SI_F$ is a line and $\dim O(F)=16$.
\end{theorem}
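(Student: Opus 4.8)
The plan is to follow the template of Theorem~\ref{thf2}, exploiting the decomposition $\Omega_{\C^2}(2,2,2)=\mathcal{GA}(2,3)\Omega_{\C^2}(2,2,1)\cup\mathcal{GA}(2,3)\Omega_2\cup O(x^2,y^2,xy)$ together with the explicit critical-set computation for members of $\Omega_2$ recorded in that proof. I would prove the implications in the order $(3)\Rightarrow(1)$, $(1)\Rightarrow(2)$, $(2)\Rightarrow(3)$, since the last step will bootstrap on the first two rather than repeat a density argument; indeed the density argument of Theorem~\ref{thf2} is unavailable here because the relevant parameter values are isolated singular points of the curve $V_0$.

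For $(3)\Rightarrow(1)$ I would compute directly for $F_3=(x^2,y^2+x,xy)$ that $J_1=x-2y^2$, $J_2=-2x^2$, $J_3=4xy$; substituting $x=2y^2$ turns the remaining generators into $4y^4,2y^3$, so $\C[x,y]/(x-2y^2,x^2,xy)\cong\C[y]/(y^3)$ and $C(F_3)$ is a triple point at the origin, while solving $x^2=u$, $xy=w$, $y^2+x=v$ recovers a unique preimage of a generic point, so $\mu(F_3)=1$. For $(1)\Rightarrow(2)$ I would first force $\dim_q(F)=3$: if $\dim_q(F)\le2$ then a linear change of the target makes the third coordinate affine, hence $J_1(F)$ and $J_2(F)$ have degree at most $1$, and a short case analysis on the (possibly degenerate) lines $V(J_1),V(J_2)$ shows $V(J_1,J_2,J_3)$ is either positive-dimensional or a scheme of length at most $2$, never a triple point. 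Since $\mu(F)=1$ excludes $O(x^2,y^2,xy)$ (which has degree $2$), it follows that $F\in\mathcal{GA}(2,3)\Omega_2$ and $F'=(x^2+y,y^2+x,xy+ax+by)\in O(F)$. As the critical scheme (with multiplicities) is a linear-equivalence invariant, $C(F')$ is again a triple point; by the computation in the proof of Theorem~\ref{thf2} the critical scheme of a member of $\Omega_2$ has total length $3$, and $F'$ has a single critical point exactly when $(a,b)=(-\frac{3}{2}\epsilon,-\frac{3}{2}\epsilon^2)$ with $\epsilon^3=1$, so the triple-point condition is equivalent to (2).

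For $(2)\Rightarrow(3)$ I would argue by symmetry rather than by an explicit change of variables. By $(3)\Rightarrow(1)$ and $(1)\Rightarrow(2)$ the mapping $F_3$ itself satisfies (2), so $(x^2+y,y^2+x,xy+a_0x+b_0y)\in O(F_3)$ for one of the special pairs $(a_0,b_0)$. It then suffices to check that all three special members lie in a single orbit. This follows from the explicit symmetry $R_\zeta(x,y)=(\zeta x,\zeta^2 y)$ and $L_\zeta(u,v,w)=(\zeta u,\zeta^2 v,w)$ with $\zeta^3=1$: one computes $L_\zeta\circ F'\circ R_\zeta=(x^2+y,y^2+x,xy+a\zeta x+b\zeta^2 y)$, so the action sends $(a,b)$ to $(a\zeta,b\zeta^2)$ and permutes the three pairs $(-\frac{3}{2}\epsilon,-\frac{3}{2}\epsilon^2)$ transitively. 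Hence $O(F)=O(F')=O(F_3)$, which is (3).

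For the closing assertions, $SI_F$ is a linear-equivalence invariant, and a direct computation with the ideal of Definition~\ref{defCF} gives $SI_{F_3}=V(x^3)$, whose support is the single line $\{x=0\}$, so $SI_F$ is a line. To obtain $\dim O(F)=16$ I would compute $\Stab(F_3)$: an element $(L,R)$ must fix the singular point $(0,0)$ and preserve $\{x=0\}$, forcing $R(x,y)=(\alpha x,\gamma x+\delta y)$; imposing $L\circ(F_3\circ R)=F_3$ and matching the linear term $x$ in the second coordinate forces $\alpha=\delta^2$, leaving the two free parameters $\gamma,\delta$, while $L$ is uniquely determined because $F_3(\C^2)$ is not contained in an affine hyperplane. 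Thus $\dim\Stab(F_3)=2$ and $\dim O(F_3)=\dim\mathcal{GA}(2,3)-2=18-2=16$. The main obstacle is precisely the step $(2)\Rightarrow(3)$: because the three relevant parameters are the isolated singular points of $V_0$, the density argument of Theorem~\ref{thf2} fails, and one must instead combine the earlier implications with the explicit $\Z/3$ symmetry to identify the orbit.
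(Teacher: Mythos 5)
Your proposal is correct, and for most of the theorem it coincides with the paper's proof: the implication $(1)\Rightarrow(2)$ uses the same decomposition $\Omega_{\C^2}(2,2,2)=\mathcal{GA}(2,3)\Omega_{\C^2}(2,2,1)\cup\mathcal{GA}(2,3)\Omega_2\cup O(x^2,y^2,xy)$, the same exclusion of $O(x^2,y^2,xy)$ via $\mu$, and the same determination of $(a,b)=(-\frac{3}{2}\epsilon,-\frac{3}{2}\epsilon^2)$ from the triple-root condition on $x^3+bx^2-\frac{1}{2}ax-\frac{1}{8}$; likewise your stabilizer count matches the paper's (the paper derives $\alpha=\gamma^2$ from invariance of the scheme $C(F_3)=V(x^2,xy,2y^2-x)$, you derive the same relation by matching the linear term of the second coordinate of $L\circ F_3\circ R$ --- both yield $\dim\Stab(F_3)=2$ and $\dim O(F_3)=16$). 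The genuine divergence is in $(2)\Rightarrow(3)$: the paper also uses the $\Z/3$ symmetry $(\epsilon x,\epsilon^2 y,z)\circ F'\circ(\epsilon x,\epsilon^2 y)$ to normalize $\epsilon=1$, but then exhibits an explicit equivalence $F_3=L\circ(x^2+y,y^2+x,xy-\frac{3}{2}x-\frac{3}{2}y)\circ R$ with $L=(x+y+2z+1,y-\frac{3}{4},y+z+\frac{1}{2})$ and $R=(x-y+\frac{1}{2},y+\frac{1}{2})$, whereas you avoid producing any transformation by bootstrapping: having proved $(3)\Rightarrow(1)$ by direct computation and $(1)\Rightarrow(2)$ independently, you apply them to $F_3$ itself to place one of the three special mappings in $O(F_3)$, and then conclude by the symmetry together with the fact that orbits partition $\Omega_{\C^2}(2,2,2)$. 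This is logically sound (no circularity, since $(2)\Rightarrow(3)$ is not used in the first two implications) and replaces an unexplained explicit change of coordinates by a structural argument, at the cost of not exhibiting the equivalence; your observation that the density argument of Theorem \ref{thf2} is unavailable because the three parameter pairs are isolated points (in fact the singular points) of $V_0$ is accurate. As a minor bonus, your degenerate-case analysis for $\dim_q(F)\le 2$ (the critical scheme is positive-dimensional or of length at most $2$) is slightly more careful than the paper's phrase ``empty, an ordinary point or not finite,'' since it also accounts for double points, which equally fail to be triple points.
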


\begin{proof}
$(1)\Rightarrow (2)$ Similarly as in the proof of Theorem \ref{thf2} observe that $\dim_q(F)=3$ because otherwise $C(F)$ would be empty, an ordinary point or not finite. Moreover $F\notin O(x^2,y^2,xy)$ since $\mu(x^2,y^2,xy)=2$. Hence we have $F\in\mathcal{GA}(2,3)\Omega_2$, in particular $F'=(x^2+y, y^2+x, xy+ax+by)\in O(F)$ for some $a,b\in\C$. Finally, $F'$ has one critical point only for such $(a,b)$ for which the polynomials
$x^3 + x^2b - \frac{1}{2}xa - \frac{1}{8}$, $3x^2+2xb-\frac{1}{2}a$, $6x+2b$ have a common zero, i.e, only for  $(a,b)=(-\frac{3}{2}\epsilon, -\frac{3}{2}\epsilon^2)$.

$(2)\Rightarrow (3)$ We have $(x^2+y, y^2+x, xy-\frac{3}{2}\epsilon x-\frac{3}{2}\epsilon^2 y)=(\epsilon x, \epsilon^2y, z)\circ (x^2+y, y^2+x, xy-\frac{3}{2}x-\frac{3}{2}y)\circ (\epsilon x, \epsilon^2 y)$. Moreover for $L=(x+y+2z+1,y-\frac{3}{4},y+z+\frac{1}{2})$ and $R=(x-y+\frac{1}{2},y+\frac{1}{2})$ we have $F_3=L\circ(x^2+y, y^2+x, xy-\frac{3}{2}x-\frac{3}{2}y)\circ R$.

$(3)\Rightarrow (1)$ Obvious.

We have $SI_{F_3}=V(x^3)$ and $C(F_3)=V(x^2,xy,2y^2-x)$. Thus if $(L,R)\in \Stab(F_3)$ then $R$ transforms the line $\{x=0\}$ onto itself and preserves the critical point $(0,0)$, so $R=(\alpha x,\beta x+\gamma y), \alpha,\gamma \in\C^*, \beta \in \C$. However $R$ preserves not only the point $(0,0)$ but the scheme $V(x^2,xy,2y^2-x)$. We have $C(F_3\circ R)=V(x^2,xy,2\gamma^2y^2-\alpha x)$, thus $\alpha=\gamma^2$. So $\dim\Stab(F)=\dim\Stab(F_3)=2$ and $\dim O(F)=16$.
\end{proof}

\begin{theorem}\label{thf4}
Let $F\in \Omega_{\C^2}(2,2,2)$. The following conditions are equivalent:
\begin{itemize}
\item[(1)] $C(F)$ is a multiple (triple) point and $\mu(F)>1$.
\item[(2)] $O(F)=O(F_4)$, i.e. $F$ is linearly equivalent to $F_4=(x^2,y^2,xy)$.
\end{itemize}
Moreover, $\dim O(F)=14$.
\end{theorem}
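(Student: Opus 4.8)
The plan is to dispatch $(2)\Rightarrow(1)$ by direct computation and to prove $(1)\Rightarrow(2)$ with the help of the decomposition of $\Omega_{\C^2}(2,2,2)$ obtained just before Theorem \ref{thf2}. For $(2)\Rightarrow(1)$ I would compute the Jacobian minors of $F_4=(x^2,y^2,xy)$, finding $J_1(F_4)=-2y^2$, $J_2(F_4)=-2x^2$ and $J_3(F_4)=4xy$; hence $C(F_4)=V(x^2,xy,y^2)$ is the origin taken with multiplicity three, i.e. a triple point. For the degree, $F_4(x,y)=F_4(x',y')$ forces $x'=\pm x$ and $y'=\pm y$ together with $x'y'=xy$, so for generic $(x,y)$ the fibre is exactly $\{(x,y),(-x,-y)\}$ and $\mu(F_4)=2>1$.

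For $(1)\Rightarrow(2)$ the first step is to show $\dim_q(F)=3$. If $\dim_q(F)<3$ then, exactly as in the proof of Theorem \ref{thf3}, after a linear change of the target coordinates one may assume $f_3$ is affine, so that $J_1(F),J_2(F)$ have degree at most $1$ and $J_3(F)$ has degree at most $2$. A short case analysis then rules out a triple point: if the linear parts of $J_1,J_2$ are independent, their common zero is a single reduced point; if they are proportional, $V(J_1,J_2)$ is empty, a line, or a line meeting the conic $V(J_3)$ in a scheme of length at most $2$ (or contained in it, giving an infinite critical set). In each case $C(F)$ is not a triple point, contradicting (1), so $\dim_q(F)=3$.

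Once $\dim_q(F)=3$, the decomposition $\Omega_{\C^2}(2,2,2)=\mathcal{GA}(2,3)\Omega_{\C^2}(2,2,1)\cup\mathcal{GA}(2,3)\Omega_2\cup O(x^2,y^2,xy)$ forces $F\in\mathcal{GA}(2,3)\Omega_2$ or $F\in O(x^2,y^2,xy)$. The crucial point is that every mapping in $\Omega_2$ has $\mu=1$: for $F'=(x^2+y,y^2+x,xy+ax+by)$ the critical scheme is cut out by $x^3+bx^2-\frac{a}{2}x-\frac{1}{8}=0$ and $y=\frac{1}{4x}$, hence consists of three points counted with multiplicity; according as these are three, two or one distinct point, the implications $(2)\Rightarrow(3)$ of Theorems \ref{thf1}, \ref{thf2} and \ref{thf3} identify $F'$ with $F_1$, $F_2$ or $F_3$ up to linear equivalence, each of which has $\mu=1$. (Alternatively one verifies directly that two distinct points of $\C^2$ can share an $F'$-value only along a curve, so the generic fibre is a single point.) Since $\mu(F)>1$, the first alternative is impossible, and $F\in O(x^2,y^2,xy)=O(F_4)$, which is (2).

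It remains to compute the orbit dimension. Since $\dim_a(F_4)=3$ the image of $F_4$ spans $\C^3$, so by the remark preceding Lemma \ref{lemstab} the component $R$ determines $L$ uniquely for any $(L,R)\in\Stab(F_4)$. As $R$ must preserve the critical scheme $C(F_4)$, which is supported at the origin, it fixes the origin and so lies in $GL(2)$; conversely, for each $R\in GL(2)$ the compositions of $x^2,y^2,xy$ with $R$ again form a basis of the three-dimensional space of binary quadratic forms, so a unique $L\in GL(3)$ satisfies $L\circ F_4\circ R=F_4$. Therefore $\Stab(F_4)\cong GL(2)$ has dimension $4$, and $\dim O(F_4)=\dim\mathcal{GA}(2,3)-\dim\Stab(F_4)=18-4=14$. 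I expect the only delicate point to be the verification that all of $\Omega_2$ has $\mu=1$; the remaining steps are bookkeeping with the already-established decomposition and elementary linear algebra.
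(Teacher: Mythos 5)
Your proposal is correct, and its core follows the paper's own route: establish $\dim_q(F)=3$ (the paper disposes of this with a one-line reference to the argument in Theorem \ref{thf2}, whereas you spell out the case analysis on the linear polynomials $J_1,J_2$ and the conic $J_3$ --- a welcome expansion, and sound, since a triple point cannot arise from a line or point meeting a conic in length at most $2$), then invoke the decomposition $\Omega_{\C^2}(2,2,2)=\mathcal{GA}(2,3)\Omega_{\C^2}(2,2,1)\cup\mathcal{GA}(2,3)\Omega_2\cup O(x^2,y^2,xy)$ and exclude $\mathcal{GA}(2,3)\Omega_2$ because every map there is equivalent to $F_1$, $F_2$ or $F_3$, all of which have $\mu=1<\mu(F)$; this is verbatim the paper's logic (one small citation slip: for three distinct singular points the relevant implication of Theorem \ref{thf1} is $(1)\Rightarrow(3)$, not $(2)\Rightarrow(3)$, though the chain $(1)\Rightarrow(2)\Rightarrow(3)$ makes this harmless). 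The one genuinely different step is the dimension count: the paper observes that $O(F_4)=\Theta^{-1}(V(e_1'',d_2'',d_3'',e_3''))$ under the isomorphism $\Omega_{\C^2}(2,2,2)\setminus V(\det\Phi_1)\cong\Omega_1\times GL(3)\times\C^5$, so $\dim O(F_4)=\dim GL(3)+5=14$ with no stabilizer analysis at all, whereas you compute $\Stab(F_4)\cong GL(2)$ (correctly: $R$ must fix the scheme-theoretic critical point, hence the origin, and since $x^2,y^2,xy$ is a basis of the binary quadratic forms, each $R\in GL(2)$ admits a unique $L$, with $\dim_a(F_4)=3$ forcing uniqueness of the affine part) and conclude $\dim O(F_4)=18-4=14$. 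Both are valid; your stabilizer computation is more in the style the paper uses for $F_3$, $F_5$, etc., and yields the extra information $\Stab(F_4)\cong GL(2)$, while the paper's fibration argument is shorter because the orbit $O(F_4)$ happens to be exactly a fiber of the projection $\Theta$.
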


\begin{proof}
Similarly as in the proof of Theorem \ref{thf2} observe that $\dim_q(F)=3$. Moreover, $\mu(F_1)=\mu(F_2)=\mu(F_3)<\mu(F)$, hence from Theorems \ref{thf1}, \ref{thf2} and \ref{thf3} we obtain $F\notin\mathcal{GA}(2,3)\Omega_2$. Recall that $\Omega_{\C^2}(2,2,2)=\mathcal{GA}(2,3)\Omega_{\C^2}(2,2,1)\cup\mathcal{GA}(2,3)\Omega_2\cup O(F_4)$, thus $F\in O(F_4)$.

Recall that we have an isomorphism $\Omega_{\C^2}(2,2,2)\setminus V(\det \Phi_1)\cong\Omega_1\times GL(3)\times\C^5$ and $\Theta:\Omega_{\C^2}(2,2,2)\setminus V(\det \Phi_1)\to\Omega_1$ may be viewed as projection. Since $O(F_4)=\Theta^{-1}(V(e_1'',d_2'',d_3'',e_3''))$
we have $\dim O(F_4)=14$.
\end{proof}

Theorem \ref{thf4} concludes the analysis of $\Omega_{\C^2}(2,2,2)\setminus V(\det(\Phi_1))$ so now we will shift our attention to the case when $\det(\Phi_1(F))=0$. As mentioned before, in this case we have $\dim_q(F)\leq 2$, i.e. $O(F)\cap\Omega_{\C^2}(2,2,1)\neq\emptyset$. In particular we may assume that $F=(f_1,f_2,ax+by)$, where $(f_1,f_2)\in\Omega_{\C^2}(2,2)$ is one of mappings listed in the paper \cite{fj} and $a,b\in\C$. We will start with the first pair from the list, namely $(f_1,f_2)=(x^2+y, y^2+x)$.

\begin{theorem}\label{thf5}
Let $F\in \Omega_{\C^2}(2,2,1)$ be of the form $(x^2+y, y^2+x, ax+by)$. The following conditions are equivalent:
\begin{itemize}
\item[(1)] $F$ has one singular point.
\item[(2)] $a,b \in \C^*$.
\item[(3)] $O(F)=O(F_5)$, i.e. $F$ is linearly equivalent to $F_5=(x^2,y^2,x+y)$.

\end{itemize}
Moreover, for such $F$ the set $SI_F$ is a line and $\dim O(F)=17$.
\end{theorem}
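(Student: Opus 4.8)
The plan is to prove the equivalence $(1)\Leftrightarrow(2)$ by a direct computation of the critical set, to establish $(2)\Rightarrow(3)$ by an explicit chain of linear equivalences, and to note that $(3)\Rightarrow(1)$ is immediate since $F_5$ has the single singular point $(0,0)$. Combined with $(1)\Leftrightarrow(2)$, the implications $(2)\Rightarrow(3)\Rightarrow(1)\Rightarrow(2)$ then close the loop.

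For $(1)\Leftrightarrow(2)$ I would first compute the Jacobian minors of $F=(x^2+y,\,y^2+x,\,ax+by)$, finding $J_1(F)=b-2ay$, $J_2(F)=a-2bx$ and $J_3(F)=4xy-1$. If $a,b\in\C^*$, the two linear equations $J_1=J_2=0$ force $x=\frac{a}{2b}$, $y=\frac{b}{2a}$, and at this point $J_3=4xy-1$ vanishes automatically; hence $C(F)$ is the single reduced point $\left(\frac{a}{2b},\frac{b}{2a}\right)$. If exactly one of $a,b$ vanishes, one of $J_1,J_2$ is a nonzero constant and $C(F)=\emptyset$, while if $a=b=0$ then $J_1=J_2\equiv0$ and $C(F)=V(4xy-1)$ is a curve. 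Thus $F$ has exactly one singular point precisely when $a,b\in\C^*$.

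The heart of the argument is $(2)\Rightarrow(3)$, and the key observation is that moving the singular point to the origin degenerates the linear part of $F$. I would set $x_0=\frac{a}{2b}$, $y_0=\frac{b}{2a}$ and precompose with the translation $R_1(x,y)=(x+x_0,y+y_0)$; after absorbing the resulting constants into a translation of the target one arrives at $(x^2+\frac{a}{b}x+y,\;y^2+\frac{b}{a}y+x,\;ax+by)$. The point is that the linear parts of the first two coordinates are exactly $\frac1b(ax+by)$ and $\frac1a(ax+by)$, so all three linear forms are proportional to $ax+by$ and the linear part has rank one. Consequently the target map $L_2(u,v,w)=(u-\frac1b w,\,v-\frac1a w,\,w)$ annihilates the linear parts of the first two coordinates and produces $(x^2,y^2,ax+by)$; a final rescaling $R_3(x,y)=(\frac{x}{a},\frac{y}{b})$ together with $L_3(u,v,w)=(a^2u,b^2v,w)$ yields $F_5=(x^2,y^2,x+y)$. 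Composing these maps shows $F\in O(F_5)$.

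For the final assertions I would work with the representative $F_5$, since $SI_F$ and $\dim O(F)$ are orbit invariants. Solving $F_5(x_1,y_1)=F_5(x_2,y_2)$ gives $x_1=\pm x_2$, $y_1=\pm y_2$ with $x_1+y_1=x_2+y_2$, whose only non-diagonal solutions are $(x_2,y_2)=(-x_1,-y_1)$ with $x_1+y_1=0$; hence $SI_{F_5}=V(x+y)$ is a line. To obtain $\dim O(F_5)=17$ I would compute $\Stab(F_5)$: any $(L,R)\in\Stab(F_5)$ fixes the singular point $(0,0)$ and preserves the critical space $Cs(F_5)=\langle x,y,xy\rangle$, which forces $R$ to be linear and, from the vanishing of the $x^2$- and $y^2$-coefficients of $(xy)\circ R$, to be diagonal or anti-diagonal; preservation of $SI_{F_5}=V(x+y)$ then reduces $R$ to a scalar multiple of the identity or of the coordinate swap. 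Since $F_5(\C^2)$ lies in no affine hyperplane, $L$ is uniquely determined by $R$, so $\Stab(F_5)$ is one-dimensional and $\dim O(F_5)=\dim\mathcal{GA}(2,3)-1=18-1=17$. The only genuinely nonobvious step is the rank-one degeneration of the linear part after centering the singular point; the remaining verifications are routine.
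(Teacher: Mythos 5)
Your proposal is correct and follows essentially the same route as the paper: the same computation $J_1=b-2ay$, $J_2=a-2bx$, $J_3=4xy-1$ for $(1)\Leftrightarrow(2)$; your chain of translations and shears in $(2)\Rightarrow(3)$ composes to precisely the paper's explicit pair (indeed $R_1\circ R_3=(\frac{x}{a}+\frac{a}{2b},\frac{y}{b}+\frac{b}{2a})$ is the paper's $R$); and the stabilizer analysis via $Cs(F_5)=\langle x,y,xy\rangle$, the pair of points at infinity, and $SI_{F_5}=V(x+y)$ is identical. One hair worth splitting: as written you only bound $\dim\Stab(F_5)\le 1$, which yields $\dim O(F_5)\ge 17$, so to get equality you should either exhibit the one-parameter family $R=(\alpha x,\alpha y)$, $L=(\alpha^{-2}x,\alpha^{-2}y,\alpha^{-1}z)$ inside $\Stab(F_5)$, or argue as the paper does that $F_5\notin O(F_1)$ forces $\dim O(F_5)\le 17$ --- either way a one-line fix.
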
  

\begin{proof}
$(1)\Rightarrow (2)$ From direct computation we obtain that $F$ has a singular point iff $4xy-1=0$, $2bx-a=0$, $b-2ay=0$ this equations hold only for $x=\frac{a}{2b}$, $y=\frac{b}{2a}$ so $a\neq 0$ and $b\neq 0$.

$(2)\Rightarrow (3)$ Consider the mappings $L=(a^2x-\frac{a^2}{b}z+\frac{a^4}{4b^2},b^2y-\frac{b^2}{a}z+\frac{2b^4-a^4}{4a^2},z-\frac{a^3+b^3}{4a^2})$ and $R=(\frac{x}{a}+\frac{a}{2b},\frac{y}{b}+\frac{b}{2a})$, we have $L\circ F\circ R=F_5$.

$(3)\Rightarrow(1)$ Obvious.

Since $F_5\notin O(F_1)$ we have $\dim O(F_5)\leq 17$. It remains to show that $\dim \Stab(F_5)\leq 1$, so let $(L,R)\in \Stab(F_5)$. We have $Cs(F)=\lra{x,y,xy}$ hence $R$ preserves the critical point $(0,0)$. Moreover, since $xy$ is the only quadratic term it also must be preserved up to scalar multiplication, i.e. $R$ preserves the pair $\{(1:0),(0:1)\}$ of points at infinity. Consequently $R=(\alpha x,\beta y)$ or $R=(\alpha y,\beta x)$, where $\alpha\beta\in\C^*$. However $R$ must also preserve $SI_{F_5}=\{(x,y): x+y=0\}$, hence $\alpha=\beta$. Since $R$ determines $L$ we have $\Stab(F_5)\cong \C^*\times\mathbb{Z}_2$. 
\end{proof}

\begin{theorem}\label{thf6}
Let $F=(x^2+y,y^2+x,ax+by)$. If either $a\in \C^*$, $b=0$ or $a=0$, $b\in \C^*$ then $F\in O(F_6)$, i.e. $F$ is linearly equivalent to $F_6=(x^2+y,y^2,x)$. Moreover, $\dim O(F_6)=16$.
\end{theorem}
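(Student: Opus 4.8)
The plan is to handle the two parameter cases at once by first exploiting a symmetry of the family, then to obtain the equivalence with $F_6$ by an explicit linear map on the target, and finally to determine $\dim O(F_6)$ through a stabilizer computation based on the critical space (the critical set being empty here).

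First I would record a symmetry of the whole family $F_{a,b}=(x^2+y,y^2+x,ax+by)$. Let $\sigma(x,y)=(y,x)$ and let $\tau\colon\C^3\to\C^3$ be the transposition of the first two coordinates, $\tau(u,v,w)=(v,u,w)$. A direct substitution gives $\tau\circ F_{a,b}\circ\sigma=(x^2+y,y^2+x,bx+ay)=F_{b,a}$. Since $\sigma$ and $\tau$ are linear isomorphisms, $F_{a,b}$ and $F_{b,a}$ are linearly equivalent, so the case $a=0,\ b\in\C^*$ reduces to the case $a\in\C^*,\ b=0$, and it suffices to treat the latter.

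For $a\in\C^*,\ b=0$ we have $F=(x^2+y,y^2+x,ax)$. I would apply the linear map $L(u,v,w)=(u,\,v-\tfrac1a w,\,\tfrac1a w)$, whose matrix has determinant $\tfrac1a\neq0$ and so lies in $GL(3)$. A one-line computation gives $L\circ F=(x^2+y,\,(y^2+x)-x,\,x)=(x^2+y,y^2,x)=F_6$, hence $F=L^{-1}\circ F_6$ and $F\in O(F_6)$, with $R=\id$.

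For the dimension I would use $\dim O(F_6)=\dim\mathcal{GA}(2,3)-\dim\Stab(F_6)=18-\dim\Stab(F_6)$. Since $C(F_6)=\emptyset$, the critical points carry no information, so the key tool is the critical space: computing the Jacobians gives $J_1(F_6)=-2y$, $J_2(F_6)=1$, $J_3(F_6)=4xy$, hence $Cs(F_6)=\langle 1,y,xy\rangle$. If $(L,R)\in\Stab(F_6)$ then $R$ preserves $Cs(F_6)$; writing $R(x,y)=(\alpha x+\beta y+\gamma,\delta x+\varepsilon y+\zeta)$ and requiring $y\circ R$ and $xy\circ R$ to remain in $\langle 1,y,xy\rangle$, together with invertibility of $R$, forces $\delta=\beta=\zeta=0$, i.e. $R(x,y)=(\alpha x+\gamma,\varepsilon y)$ with $\alpha,\varepsilon\in\C^*$. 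Because $\dim_a(F_6)=3$, the image of $F_6$ lies in no hyperplane and $R$ determines $L$ uniquely; imposing $F_6=L\circ F_6\circ R$ then matches the $x^2$- and $y$-coefficients of the first component and forces $\varepsilon=\alpha^2$. This leaves exactly the two free parameters $\alpha\in\C^*$ and $\gamma\in\C$, so $\dim\Stab(F_6)=2$ and $\dim O(F_6)=16$. I expect the last step to be the main obstacle: the equivalence itself is immediate, but pinning down $\dim O(F_6)$ forces one to replace the empty critical set by the critical space $Cs(F_6)$ and then to check that the residual relation $\varepsilon=\alpha^2$, combined with the uniqueness of $L$, cuts the stabilizer down to dimension exactly $2$.
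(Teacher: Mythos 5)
Your proposal is correct, and its two main ingredients coincide with the paper's: the explicit target map $L(u,v,w)=(u,\,v-\frac{1}{a}w,\,\frac{1}{a}w)$ with $L\circ F=F_6$ is exactly the paper's (the paper dismisses the case $a=0$, $b\in\C^*$ as ``analogous'', where you make the symmetry $\tau\circ F_{a,b}\circ\sigma=F_{b,a}$ explicit --- a nice touch), and the stabilizer analysis via the critical space $Cs(F_6)=\langle 1,y,xy\rangle$ leads to the same normal form for $R$ and the same residual relation $\varepsilon=\alpha^2$ (the paper's $\beta_1=\alpha_1^2$). The one structural difference is how $\dim O(F_6)=16$ is pinned down. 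The paper brackets it from two sides: it first gets $\dim O(F_6)\le 16$ geometrically, from $O(F_6)\subset V(\det\Phi_1)$ with $V(\det\Phi_1)$ irreducible and containing the $17$-dimensional orbit $O(F_5)$ of Theorem \ref{thf5} as a dense subset, and then only needs the one-sided estimate $\dim\Stab(F_6)\le 2$. You instead compute the stabilizer exactly, which makes the argument self-contained (no appeal to Theorem \ref{thf5} or to irreducibility of $V(\det\Phi_1)$), but it shifts a burden onto you: to conclude $\dim O(F_6)\le 16$ you need the reverse inequality $\dim\Stab(F_6)\ge 2$, i.e.\ that every $R=(\alpha x+\gamma,\alpha^2 y)$ genuinely lifts to a pair $(L,R)\in\Stab(F_6)$. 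You assert this (``leaves exactly the two free parameters'') without checking existence of $L$; the verification is routine --- once $\varepsilon=\alpha^2$, the components $x^2+y$, $y^2$, $x$ are visibly affine combinations of the components of $F_6\circ R$ with invertible linear part, which is precisely what the paper's explicit map $L'$ exhibits --- so the omission is harmless, but in your scheme this step is logically required, whereas in the paper's two-sided bracketing it is not.
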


\begin{proof}
If $a \neq 0$ and $b=0$ then we have $F=(x^2+y,y^2+x,ax)$. Moreover,  $(x,y-\frac{1}{a}z, \frac{1}{a}z)\circ F=F_6$. Second case is analogous. 

Obviously $O(F_6)\subset V(\det(\Phi_1))$, moreover $V(\det(\Phi_1))$ is irreducible and by Theorem \ref{thf5} it has the same dimension as its subset $O(F_5)$. Hence $\dim O(F_6)\leq 16$. It remains to show that $\dim \Stab(F_6)\leq 2$, so let $(L,R)\in \Stab(F_6)$. We have $Cs(F)=\lra{1,y,xy}$, hence $L$ and consequently $R$ preserve the terms $y$ and $xy$ up to scalar multiplication. Hence $R=(\alpha_1 x+\alpha_2,\beta_1y+\beta_2)$, where $\alpha_1\beta_1\in\C^*$ and $\alpha_2,\beta_2\in\C$. Moreover we have $Cs(F\circ R)=\lra{1,y,x(\beta_1y+\beta_2)}$, so $\beta_2=0$. Let $L'(x,y,z)=(\frac{x}{\alpha_1^2}-\frac{2\alpha_2z}{\alpha_1}+\frac{\alpha_2^2}{\alpha_1^2},\frac{y}{\beta_1^2},\frac{z-\alpha_2}{\alpha_1})$, we have $L'\circ F\circ R=(x^2+\frac{\beta_1}{\alpha_1^2}y,y^2,x)$. Obviously the $L''=L\circ(L')^{-1}$ such that $L''\circ (L'\circ F\circ R)=F_6$ exists if and only if $\beta_1=\alpha_1^2$. Which shows that indeed $\dim \Stab(F_6)\leq 2$.
\end{proof}

The last remaining case for $F=(x^2+y,y^2+x,ax+by)$ is $a=b=0$, i.e $F=F_7$. The mapping $(x^2+y,y^2+x)$ was thoroughly examined in \cite{fj} and the only substantial change we obtain while passing to $F_7$ lies in the size of the orbit and stabilizer (see Corollary \ref{codimo}). Indeed, if $(L,R)\in \Stab(F_7)$ then $R$ is one of the six permutations of the cusps of $(x^2+y,y^2+x)$. However, since the image of $F_7$ lies in a plane it does not determine $L$. Indeed we have $L\circ F_7=F_7$ for $L(x,y,z)=(x+\alpha_1z,y+\alpha_2z,\alpha_3z)$. Consequently $\dim O(F_7)=15$.

We now pass to the second mapping from the list in \cite{fj}, i.e. $(x^2+y,xy)$. A mapping in $O(x^2+y,xy)$ is characterized by the condition that its Jacobian is a parabola, in particular the quadratic part of the Jacobian must be a square. Let $\Phi_2(J)=4AC-B^2$ for $J=Ax^2+Bxy+Cy^2+\ldots$, then the quadratic part of $J$ is a square if and only if $\Phi_2(J)=0$. Hence a mapping $F\in O(x^2+y,xy,ax+by)$ must satisfy the conditions $\Phi_1(F)=0$ and $\Phi_2(J_1(F))=\Phi_2(J_2(F))=\Phi_2(J_3(F))=0$. Note that in general it is possible that $\Phi_2(J_i(F))=0$ for $i=1,2,3$ but there is a linear combination of $J_i(F)$ which does not have a square quadratic part. However the condition $\Phi_1(F)=0$ implies that the quadratic parts of $J_i(F)$ are proportional. Hence the variety $V(\Phi_1,\Phi_2(J_1),\Phi_2(J_2),\Phi_2(J_3))\subset\Omega_{\C^2}(2,2,2)$ is locally given by two independent equations, i.e. has dimension $16$.

\begin{theorem}\label{thf8}
Let $F\in \Omega_{\C^2}(2,2,2)$ be of the form $(x^2+y, xy, ax+by)$. The following conditions are equivalent:
\begin{itemize}
\item[(1)] $F$ has one singular point.

\item[(2)] $b\in\C^*$.

\item[(3)] $O(F)=O(F_8)$, i.e. $F$ is linearly equivalent to $F_8=(x^2,xy,y)$.

\end{itemize}
Moreover, for such $F$ the set $SI_F$ is a line and $\dim O(F_8)=16$.
\end{theorem}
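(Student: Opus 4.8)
The plan is to settle the equivalence $(1)\Leftrightarrow(2)$ by a direct computation of the critical set, to obtain $(2)\Rightarrow(3)$ by an explicit linearization, to dispatch $(3)\Rightarrow(1)$ trivially, and then to treat the two ``moreover'' assertions. For $F=(x^2+y,xy,ax+by)$ the three minors are $J_1(F)=by-ax$, $J_2(F)=a-2bx$ and $J_3(F)=2x^2-y$, so $C(F)$ is the common zero locus of $by-ax$, $a-2bx$ and $y-2x^2$. When $b\in\C^*$ the last two equations already cut out the single reduced point $\left(\frac{a}{2b},\frac{a^2}{2b^2}\right)$, on which $J_1$ automatically vanishes; when $b=0$ the minor $J_2$ degenerates to the constant $a$, so $C(F)=\emptyset$ for $a\neq 0$ and $C(F)=\{y=2x^2\}$ for $a=0$. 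Hence $F$ has exactly one singular point if and only if $b\in\C^*$, which is $(1)\Leftrightarrow(2)$; and $(3)\Rightarrow(1)$ is immediate since $C(F_8)=\{(0,0)\}$.

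The substance of the theorem is $(2)\Rightarrow(3)$, and this is where I expect the main difficulty. The idea is to look for affine forms $\xi,\eta$ in $(x,y)$ together with a target affine isomorphism $L$ such that $L\circ F=(\xi^2,\xi\eta,\eta)$; precomposing with $R=(\xi,\eta)^{-1}$ then yields $F_8=(x^2,xy,y)$. This reduces everything to writing each of $\xi^2,\xi\eta,\eta$ as an affine combination of the components $x^2+y$, $xy$, $ax+by$ and the constant $1$. Matching quadratic parts forces $\xi$ to have no $y$-term, while $\eta$, being a nonconstant linear form that must contain $y$, has to be a multiple of the third component $ax+by$; this is exactly where the hypothesis $b\in\C^*$ is used, since $ax+by$ is the only component contributing $y$ without a square. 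The subtlety, and the main obstacle, is that completing the square in the first coordinate pins down the constant of $\xi$, after which $\xi\eta$ in general acquires a spurious linear term; this is cancelled by adding the right constant to $\eta$. Concretely the choice $\xi=x-\frac{a}{2b}$, $\eta=\frac{a}{b}x+y-\frac{a^2}{b^2}$ works: the induced $L$ has linear part of determinant $\frac{1}{b}\neq 0$, hence is an isomorphism, and one verifies $L\circ F\circ R=F_8$ by direct substitution.

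Finally I would prove the two remaining claims. For the self-intersection curve, $F_8(x,y)=F_8(x',y')$ forces $y=y'$ and then $x=x'$ unless $y=0$, so the only non-simple fibres occur along $\{y=0\}$, giving $SI_{F_8}=\{y=0\}$; since $F$ is linearly equivalent to $F_8$, the set $SI_F$ is the affine image of this line, hence a line. For the orbit dimension I use $\dim O(F_8)=18-\dim\Stab(F_8)$. If $(L,R)\in\Stab(F_8)$ then, as recalled before Theorem \ref{thf1}, $R$ preserves $Cs(F_8)=\langle x,y,x^2\rangle$; requiring $x\circ R$ and $y\circ R$ to lie in this space kills their constant terms, and requiring $x^2\circ R=(x\circ R)^2$ to contain no $xy$ or $y^2$ kills the $y$-part of $x\circ R$, so $R(x,y)=(\alpha x,\beta x+\gamma y)$; preservation of $SI_{F_8}=\{y=0\}$ then gives $\beta=0$, so $R$ is diagonal. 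Since $1,x^2,xy,y$ are linearly independent the image of $F_8$ is not contained in a hyperplane, so $R$ determines $L$; hence $\Stab(F_8)\cong(\C^*)^2$ is $2$-dimensional and $\dim O(F_8)=16$.
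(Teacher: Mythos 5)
Your proposal is correct, and on the equivalences it follows essentially the paper's path: $(1)\Leftrightarrow(2)$ by the same computation $J_1=by-ax$, $J_2=a-2bx$, $J_3=2x^2-y$, and $(2)\Rightarrow(3)$ by an explicit affine equivalence --- you do it in a single change of coordinates with $\xi=x-\frac{a}{2b}$, $\eta=\frac{a}{b}x+y-\frac{a^2}{b^2}$, which indeed works (one checks $\xi^2=f_1-\frac{1}{b}f_3+\frac{a^2}{4b^2}$, $\xi\eta=\frac{a}{b}f_1+f_2-\frac{3a}{2b^2}f_3+\frac{a^3}{2b^3}$, $\eta=\frac{1}{b}f_3-\frac{a^2}{b^2}$, so the induced $L$ has determinant $\frac{1}{b}$ as you claim), whereas the paper passes through the intermediate form $(x^2+y,xy,x+y)$ in two steps; that difference is cosmetic. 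Where you genuinely diverge is the dimension count. The paper proves only the upper bound $\dim\Stab(F_8)\leq 2$ (via $Cs(F_8)=\langle x,y,x^2\rangle$ and the observation that $x^2$ and $xy$ have no linear part, which kills the off-diagonal term of $R$) and obtains the matching bound $\dim O(F_8)\leq 16$ from the containment of $O(F_8)$ in the $16$-dimensional variety $V(\Phi_1,\Phi_2(J_1),\Phi_2(J_2),\Phi_2(J_3))$ constructed in the preamble before the theorem. You instead determine $\Stab(F_8)$ exactly, using preservation of $SI_{F_8}=\{y=0\}$ (which you also prove, and which the paper's proof leaves implicit) to force $R$ diagonal; this makes the proof self-contained and avoids the ambient-variety argument, but it means you need both inequalities from the stabilizer. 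Your concluding claim $\Stab(F_8)\cong(\C^*)^2$ therefore requires the converse check --- omitted in your write-up, though it is a one-line verification --- that every diagonal $R=(\alpha x,\gamma y)$ actually occurs in the stabilizer, e.g.\ with $L=(\alpha^{-2}x,(\alpha\gamma)^{-1}y,\gamma^{-1}z)$; without it you have only $\dim\Stab(F_8)\leq 2$, i.e.\ $\dim O(F_8)\geq 16$, and the upper bound that the paper gets from $V(\Phi_1,\Phi_2(J_1),\Phi_2(J_2),\Phi_2(J_3))$ would be missing. Add that line and your argument is complete.
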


\begin{proof}
$(1)\Rightarrow (2)$ From direct computation we obtain that $F$ has a singular point iff $2x^2-y=0$, $2bx-a=0$, $by-ax=0$. Those equations hold only for $x=\frac{a}{2b}$ so $b\neq 0$.

$(2)\Rightarrow (3)$ If $a=0$ then $L_0\circ F=F_8$ for $L_0=(x-\frac{z}{b},y,\frac{z}{b})$. So assume $a\neq 0$ and consider the mappings $L_1=(\frac{b^2}{a^2}x,\frac{b^3}{a^3}y,\frac{b}{a^2}z)$ and $R_1=(\frac{a}{b}x,\frac{a^2}{b^2}y)$, we have $F'=L_1\circ F\circ R_1=(x^2+y,xy,x+y)$.
Now consider $L=(x-z+\frac{1}{4},y+x-\frac{3}{2}z+\frac{1}{2},z-1)$ and $R_2=(x+\frac{1}{2},y-x-\frac{1}{2})$, we have $F_8=L_2\circ F'\circ R_2$.

$(3)\Rightarrow(1)$ Obvious.

Since $O(F_8)$ is contained in $V(\Phi_1,\Phi_2(J_1),\Phi_2(J_2),\Phi_2(J_3))$ it has dimension at most $16$. It remains to show that $\dim \Stab(F_8)\leq 2$, so let $(L,R)\in \Stab(F_8)$. We have $Cs(F_8)=\lra{x,y,x^2}$, hence $R=(\alpha x,\beta_1y+\beta_2x)$, where $\alpha,\beta_1\in\C^*$ and $\beta_2\in\C$. Moreover, $x^2$ and $xy$ do not have a linear part so $y$ and $\beta_1y+\beta_2x$ are proportional, i.e.$\beta_2=0$.
\end{proof}

\begin{theorem}\label{thf9}
Let $F=(x^2+y,xy,ax+by)$. If $a\in\C^*$ and $b=0$ then $F\in O(F_9)$, i.e. $F$ is linearly equivalent to $F_9=(x^2+y,xy,x)$. Moreover, $\dim O(F_9)=15$.
\end{theorem}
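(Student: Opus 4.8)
The plan is to handle the two assertions separately: first the linear equivalence, which is immediate, and then the dimension count, which reduces to determining $\dim\Stab(F_9)$.

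For the equivalence, since $b=0$ we have $F=(x^2+y,xy,ax)$ with $a\in\C^*$. Taking $L(x,y,z)=(x,y,\frac{1}{a}z)$ gives $L\circ F=(x^2+y,xy,x)=F_9$, so $F\in O(F_9)$. This is the easy part and requires no further work.

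For the dimension, recall $\dim O(F_9)+\dim\Stab(F_9)=\dim\mathcal{GA}(2,3)=18$, so it suffices to prove $\dim\Stab(F_9)=3$. I would first compute the critical space: a direct calculation gives $J_1(F_9)=-x$, $J_2(F_9)=1$ and $J_3(F_9)=2x^2-y$, hence $Cs(F_9)=\lra{1,x,2x^2-y}$ (and incidentally $C(F_9)=\emptyset$ since $J_2\equiv 1$). Let $(L,R)\in\Stab(F_9)$. As noted before the theorems, $R$ must preserve $Cs(F_9)$. Since $R$ is affine it preserves the degree filtration, so it preserves the degree $\le 1$ part $\lra{1,x}$ of $Cs(F_9)$; this forces the first coordinate of $R$ to contain no $y$, i.e. $R=(\alpha x+\alpha_0,\,\beta x+\gamma y+\beta_0)$ with $\alpha\in\C^*$. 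Comparing the $x^2$ and $y$ coefficients of $R^*(2x^2-y)=2(\alpha x+\alpha_0)^2-(\beta x+\gamma y+\beta_0)\in Cs(F_9)$ then forces $\gamma=\alpha^2$.

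At this point $R$ depends on the four parameters $\alpha,\alpha_0,\beta,\beta_0$, and one more constraint must be extracted from the requirement that an $L$ with $L\circ F_9\circ R=F_9$ actually exists. Here I would use the structural remark that for any affine $M=L^{-1}$ every component of $M\circ F_9$ has the shape $p(x^2+y)+q\,xy+rx+t$; in particular it has no $y^2$ term and its $x^2$ coefficient equals its $y$ coefficient. Applying this to $F_9\circ R=(r_1^2+r_2,\,r_1r_2,\,r_1)$: the first and third components automatically satisfy the condition, but the middle component $r_1r_2$ has $x^2$ coefficient $\alpha\beta$ and $y$ coefficient $\alpha^2\alpha_0$, so we must have $\beta=\alpha\alpha_0$. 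Conversely, for every $(\alpha,\alpha_0,\beta_0)\in\C^*\times\C\times\C$ one checks that $R=(\alpha x+\alpha_0,\,\alpha\alpha_0 x+\alpha^2 y+\beta_0)$ does admit a (unique, since $F_9(\C^2)$ spans no hyperplane) $L$ with $(L,R)\in\Stab(F_9)$. Hence $\Stab(F_9)$ is exactly a three-parameter family, $\dim\Stab(F_9)=3$, and $\dim O(F_9)=18-3=15$. The routine parts are the Jacobian computation of $Cs(F_9)$ and the coefficient bookkeeping; the main obstacle is the last step, since the critical space alone only cuts $R$ down to four parameters, so the fourth must be eliminated by a genuinely different argument — the equal-coefficient constraint coming from the normal form of $M\circ F_9$ — and the converse must also be verified to obtain the matching lower bound $\dim\Stab(F_9)\ge 3$ rather than merely an upper bound.
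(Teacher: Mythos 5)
Your proposal is correct, and its core computation coincides with the paper's: the paper likewise uses the critical space $Cs(F_9)=\lra{1,x,2x^2-y}$ (misprinted there as $Cs(F_8)$) to force $R=(\alpha x+\alpha_0,\beta x+\alpha^2y+\beta_0)$, and then extracts the same relation $\beta=\alpha\alpha_0$ from the middle component $(\alpha x+\alpha_0)(\beta x+\alpha^2y+\beta_0)$ of $F_9\circ R$, whose $x^2$ and $y$ terms can only be cancelled against $x^2+y$ --- your ``equal $x^2$ and $y$ coefficients, no $y^2$'' normal form for $M\circ F_9$ is just a repackaging of that cancellation argument. Where you genuinely diverge is in how the two complementary bounds are assembled. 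The paper only proves the upper bound $\dim\Stab(F_9)\leq 3$ (hence $\dim O(F_9)\geq 15$) and gets the matching inequality $\dim O(F_9)\leq 15$ externally, from orbit topology: $O(F_9)\subset\overline{O(F_8)}$, $O(F_8)$ is open in its closure, and $\dim O(F_8)=16$, so $\dim O(F_9)<16$. You instead stay entirely inside the stabilizer computation and verify the converse --- that every $R=(\alpha x+\alpha_0,\alpha\alpha_0x+\alpha^2y+\beta_0)$ admits a (unique, since $\dim_a(F_9)=3$) compatible $L$ --- which pins $\dim\Stab(F_9)=3$ exactly. Your route is self-contained and does not depend on Theorem \ref{thf8} or on knowing that $O(F_9)$ sits in the boundary of $O(F_8)$; the paper's route spares the converse verification and reuses structural information about the orbit stratification that it needs anyway for Section \ref{secOCC}. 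Both are complete proofs; your converse check (the relevant determinant is $\alpha^6\neq 0$, so $L$ exists and is invertible) is the one point that must genuinely be carried out rather than merely asserted, and you have flagged it correctly.
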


\begin{proof}
Obviously if $a\in\C^*$ then $(x^2+y,xy,ax)\in O(x^2+y,xy,x)$. To show that $\dim O(F_9)\leq 15$ notice that $O(F_9)$ is contained in the closure of $O(F_8)$ and $O(F_8)$ is open in its closure. Hence $\dim O(F_9)<\dim O(F_8)=16$. It remains to show that $\dim \Stab(F_9)\leq 3$, so let $(L,R)\in \Stab(F_9)$. We have $Cs(F_8)=\lra{1,x,2x^2-y}$, hence $R=(\alpha_1 x+\alpha_2,\beta_1x+\alpha_1^2y+\beta_2)$, where $\alpha_1\in\C^*$ and $\alpha_2,\beta_1,\beta_2\in\C$. Consider the second component of $F\circ R$, i.e. $(\alpha_1 x+\alpha_2)(\beta_1x+\alpha_1^2y+\beta_2)$. It contains the terms $\alpha_1\beta_1x^2+\alpha_1^2\alpha_2y$ which can be only canceled by $x^2+y$ from the first component. Consequently $\beta_1=\alpha_1\alpha_2$, so $\dim \Stab(F_9)\leq 3$.
\end{proof}

The last remaining case for $F=(x^2+y,xy,ax+by)$ is $a=b=0$, i.e $F=F_{10}$. We read from \cite{fj} that $\dim \Stab(x^2+y,xy)=1$ (with respect to $\mathcal{GA}(2,2)$), hence by Corollary \ref{codimo} we have $\dim \Stab(F_{10})=4$ and $\dim O(F_{10})=14$.

The third mapping on the list in \cite{fj} is $(x^2+y,y^2)$, thus we should consider mappings of the form $F=(x^2+y,y^2,ax+by)$. However if $a\neq 0$ then the critical set of $(x^2+y,y^2+ax+by)$ is a hyperbola, so by \cite{fj} $(x^2+y,y^2+ax+by)\in O(x^2+y,y^2+x)$. This case was considered earlier so we assume that $a=0$. That leaves us with two possibilities: either $b\neq 0$ and $F\in O(F_{11})$ or $b=0$ and $F=F_{12}$. In the former case we $Cs(F_{11})=\lra{x,xy}$, so if $(L,R)\in \Stab(F_{11})$ then $R=(\alpha_1 x,\beta_1y+\beta_2)$ for $\alpha_1,\beta_1\in\C^*$ and $\beta_2\in\C$. It is easy to see that for any such $R$ there exists a suitable $L$, hence $\dim \Stab(F_{11})= 3$. In the latter case we use the description of $(x^2+y,y^2)$ from \cite{fj} and obtain that $\dim \Stab(F_{12})=1+3=4$.

The fourth mapping on the list in \cite{fj} is $(x^2,y^2)$. Obviously if $a\neq 0$ or $b\neq 0$ then $(x^2,y^2,ax+by)$ is in $O(x^2+y,y^2,a'x+b'y)$ for some $a'$ and $b'$. Hence the only case not considered earlier is $F_{13}=(x^2,y^2,0)$. Since by \cite{fj} $\dim \Stab(x^2,y^2)=2$ we obtain $\dim \Stab(F_{13})=5$.

The fifth mapping on the list in \cite{fj} is $(x^2-x,xy)$, so let us consider mappings of the form $F=(x^2-x,xy,ax+by)$. If $b\neq 0$ then the critical set of $(x^2-x+ax+by,xy)$ is a parabola, so by \cite{fj} $(x^2-x+ax+by,xy)\in O(x^2+y,xy)$, which is a case considered earlier. Thus we have either $a\neq 0$ and $F\in O(F_{14})$ or $a=0$ and $F=F_{15}$. In the former case we $Cs(F_{14})=\lra{x,x^2}$ so if $(L,R)\in \Stab(F_{11})$ then $R=(\alpha_1 x,\beta_1y+\beta_2x+\beta_3)$ for $\alpha_1,\beta_1\in\C^*$ and $\beta_2,\beta_3\in\C$. It is easy to see that for any such $R$ there exists a suitable $L$, hence $\dim \Stab(F_{14})= 4$. In the latter case we use the description of $(x^2-x,xy)$ from \cite{fj} and obtain that $\dim \Stab(F_{15})=2+3=5$.

The sixth mapping on the list in \cite{fj} is $(x^2,xy)$. This is the final mapping on the list whose orbit does not intersect $\Omega_{\C^2}(2,1)$. Obviously if $a\neq 0$ or $b\neq 0$ then $(x^2,y^2,ax+by)$ is in one of the orbits described earlier. So, we only obtain $F_{16}=(x^2,xy,0)$ with $\dim \Stab(F_{16})=3+3=6$.

Since we exhausted all orbits which do not intersect $\Omega_{\C^2}(2,1,1)$ let us focus on the case when $F\in\Omega_{\C^2}(2,1,1)$, i.e $\dim_q(F)\leq 1$. If $\dim_a(F)=3$ then we may assume that $F=(f_1,x,y)$, where $f_1$ is homogeneous of degree $2$. There are two possibilities: either $f_1$ is a square or not. In the latter case we have $F\in O(F_{17})$, i.e. $F$ is linearly equivalent to $(xy,x,y)$, and in the former case we have $F\in O(F_{18})$, i.e. $F$ is linearly equivalent to $(x^2,x,y)$. It is easy to see that if $(L,R)\in \Stab(F_{17})$ then $R=(\alpha_1 x+\alpha_2,\beta_1y+\beta_2)$ for $\alpha_1,\beta_1\in\C^*$ and $\alpha_2,\beta_2\in\C$. Moreover for all such $R$ there is a unique $L$ such that $(L,R)\in \Stab(F_{17})$. Similarly for $(L,R)\in \Stab(F_{18})$ we obtain the condition $R=(\alpha_1 x+\alpha_2,\beta_1y+\beta_2x+\beta_3)$ for $\alpha_1,\beta_1\in\C^*$ and $\alpha_2,\beta_2,\beta_3\in\C$. Consequently $\dim \Stab(F_{17})=4$ and $\dim \Stab(F_{18})=5$.

Finally, the only case left to consider is when $\dim_a(F)\leq 2$, i.e. $O(F)\cap\Omega_{\C^2}(2,1,0)\neq\emptyset$. In that case we might assume that $F=(f_1,f_2,0)$, where $(f_1,f_2)$ is one of the $11$ mappings listed in \cite{fj} that belong to $\Omega_{\C^2}(2,1)$ (i.e. $f_7$ --- $f_{17}$ on the list). We add those mappings to our list as $F_{19}$ --- $F_{29}$. We calculate thee size of their orbits using Corollary \ref{codimo}.

\section{The real case}\label{secreal}

In this section we will determine the orbits in $\Omega_{\R^2}(2,2,2)$ over the field of real numbers. We will base on the complex case and to avoid confusion we will use the subscripts $\rr$ and $\cc$ to indicate over which field an object is considered. Obviously $O_{\rr}(F)\subset O_{\cc}(F)\cap \Omega_{\R^2}(2,2,2)$ for $F \in \Omega_{\R^2}(2,2,2)$. In most cases we have $O_{\rr}(F)= O_{\cc}(F)\cap \Omega_{\R^2}(2,2,2)$ however there are $7$ cases when $O_{\cc}(F)\cap \Omega_{\R^2}(2,2,2)=O_{\rr}(F)\cup O_{\rr}(F')$. We start with the following:

\begin{theorem}
The restriction of the complex orbit $O_{\cc}(F_1)$ splits into two real orbits. The first is represented by  $F_1=(x^2+y,y^2+x,xy)$ with $C(F_1)=\{(\frac{1}{2},\frac{1}{2})\}$ and $SI_{F_1}=\{(x,y):x+y-1=0\}$. The second case is represented by $F_{1'}=(x^2-y^2+x,2xy-y,-3x^2+y^2)$ with singular points $P_1=\left( \frac{1}{2},0\right)$, $P_2=\left(\frac{1}{4},\frac{\sqrt{3}}{4} \right)$, $P_3=\left(\frac{1}{4},\frac{-\sqrt{3}}{4} \right)$ and $SI_{F_{1'}}=\{(x,y):(x-\frac{1}{2})(x^2-\frac{1}{3}y^2)=0\}$.
\end{theorem}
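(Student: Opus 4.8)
The plan is to describe the real points $O_{\cc}(F_1)\cap\Omega_{\R^2}(2,2,2)$ and to show they constitute exactly two real orbits. By Theorem \ref{thf1} a mapping lies in $O_{\cc}(F_1)$ precisely when it has three distinct (complex) singular points, so the real members of this orbit are exactly the real $F$ whose critical scheme consists of three reduced complex points. Since $F$ is real, complex conjugation permutes these three points, so the number of \emph{real} singular points is either $3$ or $1$: a conjugate pair accounts for the two non-real ones, and the odd total $3$ rules out the value $0$. I would first record that this number is a real-linear-equivalence invariant, because a real $(L,R)\in\mathcal{GA}(2,3)$ induces a real affine bijection $R$ of the source carrying real critical points to real critical points. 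I would then verify, by the explicit computations already used in Theorem \ref{thf1}, that $F_1$ has the single real singular point $(\tfrac12,\tfrac12)$ (the remaining two being the conjugate pair $(\tfrac{\varepsilon}{2},\tfrac{\varepsilon^2}{2})$, $\varepsilon^3=1$, $\varepsilon\neq1$), while $F_{1'}$ has the three real singular points $P_1,P_2,P_3$. This simultaneously shows that both mappings belong to $O_{\cc}(F_1)$ and that they are not real-equivalent, yielding at least two real orbits, and it checks the stated data $C(F_1)=\{(\tfrac12,\tfrac12)\}$, $SI_{F_1}=\{x+y-1=0\}$ against the three real lines of $SI_{F_{1'}}$.

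It then remains to prove there are at most two orbits, i.e.\ that this invariant determines the real orbit. Here I would reuse the normalization developed before Theorem \ref{thf2}. For any real $F$ with $\det\Phi_1(F)\neq0$ the map $\Phi_1^{-1}$ and the ensuing translations are real, and the step $\Theta_1$ extracts only \emph{real cube roots of real numbers}, which always exist; the auxiliary substitution $\Theta_{\alpha,\beta}$ used to clear the degenerate subcase $e_1''d_2''=0$ (which for a mapping in $O_{\cc}(F_1)$ is not $F_4$, since $\mu(F_1)\neq\mu(F_4)$) can likewise be taken with real $\alpha,\beta$. Hence every real $F\in O_{\cc}(F_1)(\R)$ is real-equivalent to a normal form $F'_{a,b}=(x^2+y,\,y^2+x,\,xy+ax+by)$ with $a,b\in\R$. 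By the computation in the proof of Theorem \ref{thf2} the singular points of $F'_{a,b}$ have $x$-coordinates the roots of the real cubic $p(x)=x^3+bx^2-\tfrac a2 x-\tfrac18$, so the number of real singular points is $3$ or $1$ according to the sign of $\operatorname{disc}(p)$; the vanishing $\operatorname{disc}(p)=0$ is exactly the curve $V_0$, which corresponds to fewer singular points and is excluded here.

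Thus the two sign regions $\{\operatorname{disc}(p)>0\}$ and $\{\operatorname{disc}(p)<0\}$ in the $(a,b)$-plane split $O_{\cc}(F_1)(\R)$ into the two candidate classes, and the crux — which I expect to be the main obstacle — is to show that each region is a single real orbit. I would attack this through the residual real symmetries of the normal form: the real transformations preserving $\Omega_2$ act on the parameter $(a,b)$, equivalently on the root triple of $p$, via the real shift coming from $\Theta_{\alpha,\beta}$, the real scaling appearing in $\Theta_1$, and the $S_3$ permuting the singular points that already generates $\Stab_{\cc}(F_1)$; the goal is to verify this action is transitive on each region, so that every three-real-root form is equivalent to $F_{1'}$ and every one-real-root form to $F_1$. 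An equivalent and perhaps cleaner finish is a descent count: since $\mathcal{GA}(2,3)$ is assembled from general linear and vector groups it has trivial first Galois cohomology, so by the usual orbit–cohomology correspondence the real orbits inside $O_{\cc}(F_1)(\R)$ are classified by $H^1(\operatorname{Gal}(\C/\R),\Stab_{\cc}(F_1))$ with $\Stab_{\cc}(F_1)\cong S_3$; a direct computation of this group gives exactly two classes, the trivial one corresponding to $F_1$ and the transposition class to $F_{1'}$. Either route produces the claimed splitting into $O_{\rr}(F_1)$ and $O_{\rr}(F_{1'})$, and the representatives' invariants computed in the first step complete the proof.
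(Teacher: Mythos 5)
Your opening reduction is the same as the paper's: conjugation permutes the three complex singular points, so a real $F\in O_{\cc}(F_1)$ has one or three real singular points, this count is a real-equivalence invariant, and $F_1$, $F_{1'}$ realize the two values — giving at least two real orbits. Where you diverge is in proving there are at most two. The paper settles this with a short elementary argument: if $F$ has three real singular points and $F_{1'}=L\circ F\circ R$ over $\C$, then $R$ carries the three real, non-collinear singular points of $F_{1'}$ to real points, and since an affine map of the plane is determined by its values at three non-collinear points, $R$ is real; $L$ is then real because $F\circ R(\R^2)$ affinely spans $\R^3$. In the one-real-point case the paper composes with one of the six explicit elements of $\Stab(F_1)$ so that $R$ and $\overline{R}$ agree at the real singular point, hence (the other two points being swapped conjugates) at three non-collinear points, forcing $\overline{R}=R$. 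This is, in effect, a hand-made trivialization of exactly the cocycle you propose to classify.

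Of your two suggested finishes, the first (real normal form $(x^2+y,y^2+x,xy+ax+by)$, then transitivity of the residual real symmetries on each discriminant-sign region) is only a plan: the transitivity claim, which you yourself flag as the main obstacle, is never established, so that route as written has a gap. The cohomological finish is, however, sound and genuinely different from the paper's proof: $H^1(\mathrm{Gal}(\C/\R),\mathcal{GA}(2,3)(\C))$ is indeed trivial ($GA(n)$ being an extension of $\GL_n$ by the vector group of translations), so the real orbits inside $O_{\cc}(F_1)\cap\Omega_{\R^2}(2,2,2)$ are classified by $H^1(\mathrm{Gal}(\C/\R),\Stab(F_1))$; with $\Stab(F_1)\cong S_3$ generated by $r=(\varepsilon x,\varepsilon^2 y)$ and $t=(y,x)$, conjugation inverts $r$ and fixes $t$, the cocycles are $\{e,r,r^2,t\}$, and the classes are $[e]=\{e,r,r^2\}$ and $[t]=\{t\}$ — exactly two, as you asserted. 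Note, though, that you left this computation unperformed, and it also requires knowing $\#\Stab(F_1)$ is exactly $6$: Theorem \ref{thf1} only gives the bound $\leq 6$, so the six elements must be exhibited explicitly, which is precisely what the paper does inside this proof. Your count of real singular points then correctly pins $F_{1'}$ to the nontrivial class. On balance, the paper's argument is shorter and completely elementary, while your descent argument costs standard machinery but explains a priori why there are exactly two real forms and would handle the other real splittings ($F_{7}$, $F_{13}$, $F_{19}$, $F_{25}$, $F_{17}$ versus $F_{17'}$) uniformly by the same stabilizer computation.
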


\begin{proof}
Let $F\in O_{\C}(F_1)\cap \Omega_{\R^2}(2,2,2)$. Note that $F$ has three singular points over $\C$. Since the singular points are given by real equations they are either real or in complex conjugate pairs. Note that $F_{1'}$ has three real singular points and $F_1$ has one.

Assume that $F$ has three real singular points. There are some $L\in GA_{\C}(3)$, $R \in GA_{\C}(2)$ such that $F_{1'}=L \circ F \circ R$. Note that $R$ maps the three real and non-collinear singular points of $F_{1'}$ into the three real singular points of $F$. Hence $R(\R^2)=\R^2$, so $R\in GA_{\R}(2)$. Furthermore the affine space spanned by $F \circ R(\R^2)$ is $\R^3$, hence also $L\in GA_{\R}(3)$. Thus $F\in O_{\R}(F_{1'})$. 

Now assume that $F$ has one singular point. There are some $L\in GA_{\C}(3)$, $R \in GA_{\C}(2)$ such that $F_{1}=L \circ F \circ R$. By composing with an element of $\Stab (F_1)$ (which has one of following forms: $(x,y)$, $(\varepsilon x,\varepsilon^2 y)$, $(\varepsilon^2 x,\varepsilon y)$, $(y,x)$, $(\varepsilon y,\varepsilon^2 x)$, $(\varepsilon^2 y,\varepsilon x)$, $\varepsilon^3=1$) we may assume that $R$ maps the real singular point of $F_1$ to the real singular point of $F$.Since the complex singular points are conjugate the mapping $\overline{R}$, i.e. the conjugate of $R$, coincides with $R$ on the three non-collinear singular points. Hence $\overline{R}=R$, so $R\in GA_{\R}(2)$ and also $L\in GA_{\R}(3)$.
\end{proof}

\begin{theorem}
$O_{\R}(F_2) = O_{\C}(F_2)\cap \Omega_{\R^2}(2,2,2)$.
\end{theorem}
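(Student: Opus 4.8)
The plan is to establish the nontrivial inclusion $O_{\C}(F_2)\cap\Omega_{\R^2}(2,2,2)\subseteq O_{\R}(F_2)$; the reverse inclusion is automatic. So I would fix an arbitrary $F\in O_{\C}(F_2)\cap\Omega_{\R^2}(2,2,2)$ together with some $L\in GA_{\C}(3)$ and $R\in GA_{\C}(2)$ realizing $F_2=L\circ F\circ R$, and then try to replace $(L,R)$ by a real pair. The conceptual reason such a replacement exists --- and the reason $O_{\C}(F_2)$ does \emph{not} split, in contrast with $O_{\C}(F_1)$ --- is that the two singular points of $F_2$ carry \emph{different} multiplicities in $C(F_2)$ (one double, one simple, by Theorem \ref{thf2}), and likewise $SI_{F_2}$ is a double line together with a transversal simple line.

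The first step is to record that this structure is defined over $\R$. The polynomials $J_i(F)$ are real, so complex conjugation acts on the scheme $C(F)=V(J_1(F),J_2(F),J_3(F))$ and permutes its points preserving multiplicities; since the two points of $C(F)$ have distinct multiplicities they cannot be interchanged, hence each is conjugation-invariant, i.e.\ real. The same reasoning applies to $SI_F$, whose defining ideal involves only the real differences $f_i(x_1,y_1)-f_i(x_2,y_2)$: its double and simple components are each conjugation-invariant, hence real lines. This is precisely the step that breaks for $F_1$, whose three singular points share a single multiplicity and may therefore occur as one real point together with a complex-conjugate pair, producing the extra real form $F_{1'}$.

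With the real structure in hand, the upgrade of $(L,R)$ to a real pair is carried out by a conjugation argument. Conjugating $F_2=L\circ F\circ R$ and using $\overline{F}=F$ and $\overline{F_2}=F_2$ yields $F_2=\overline{L}\circ F\circ\overline{R}$, from which one obtains $(\overline{L}\circ L^{-1},\,N^{-1})\in\Stab(F_2)$, where $N=\overline{R}^{-1}\circ R$. By Theorem \ref{thf2}, in the (real) coordinates in which $SI_{F_2}=V(x_1^2y_1)$ the $R$-component of $\Stab(F_2)$ is exactly the one-parameter group of scalings $(x_1,y_1)\mapsto(tx_1,y_1)$, $t\in\C^*$, along the simple line. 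Hence $N$ is such a scaling $(x_1,y_1)\mapsto(sx_1,y_1)$, and a one-line check gives $\overline{N}=N^{-1}$, i.e.\ $\lvert s\rvert=1$. Therefore $s=\overline{t}/t$ for a suitable $t$, so $N=\overline{B}\circ B^{-1}$ for the stabilizer element $B=(tx_1,y_1)$; replacing $(L,R)$ by $(A\circ L,\,R\circ B)$, where $(A,B)\in\Stab(F_2)$, preserves the identity $F_2=L\circ F\circ R$ while making the new $R$ real. Finally, because $\dim_a(F)=\dim_a(F_2)=3$ the real points $F(\R^2)$ affinely span $\R^3$, and since $L$ carries them onto the real set $F_2\circ R^{-1}(\R^2)$ it must itself lie in $GA_{\R}(3)$. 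Hence $F\in O_{\R}(F_2)$.

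I expect the genuine obstacle to be this last scaling. The two real singular points are collinear (both lie on the double line), so --- unlike for $F_1$ --- they do not pin down $R$, and the self-intersection curve contributes only the two real lines; the residual freedom is exactly the stabilizer torus of scalings along the simple line. The whole argument therefore hinges on verifying that the conjugation defect $N$ lands in the real part of that torus, which is guaranteed by the identity $\overline{N}=N^{-1}$. Everything else is routine bookkeeping.
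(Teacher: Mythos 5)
Your proposal is correct, and it shares the two load-bearing ingredients with the paper's proof: the observation that the double/simple asymmetry of $C(F)$ and $SI_F$ forces all this data to be real (conjugation preserves multiplicities, so nothing can be swapped --- exactly the mechanism that fails for $F_1$), and the identification, from the proof of Theorem \ref{thf2}, of the residual freedom as the torus of scalings $(x_1,y_1)\mapsto(tx_1,y_1)$ along the simple line, followed by the affine-span argument to realify $L$. Where you genuinely diverge is the middle step. The paper first chooses a \emph{real} normalization $R_0\in GA_{\R}(2)$ matching $SI_{F\circ R_0}=SI_{F_2}$ and $C(F\circ R_0)=C(F_2)$ (possible precisely because the configuration --- two lines meeting at the simple singular point, with the double point elsewhere on the double line --- is real); then any remaining complex $R$ in $F_2=L\circ F\circ R_0\circ R$ is forced to be a torus scaling, hence the $R$-part of a stabilizer element, and is absorbed wholesale, with no need to make $R$ itself real. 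You instead skip the normalization and run a Galois-descent argument: the conjugation defect $N=\overline{R}^{-1}\circ R$ is a stabilizer scaling satisfying $\overline{N}=N^{-1}$, i.e.\ $|s|=1$, hence a coboundary $s=\overline{t}/t$, which lets you twist $(L,R)$ by $(A,B)\in\Stab(F_2)$ into a real pair --- in effect a Hilbert 90 computation showing $H^1(\mathrm{Gal}(\C/\R),\C^*)$ is trivial for this stabilizer torus. Your route is more systematic (it makes transparent why $O_{\C}(F_2)$ has a single real form while $O_{\C}(F_1)$, with finite stabilizer, splits), at the cost of the cocycle bookkeeping; the paper's route is more geometric and shorter. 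One small point present in both arguments but worth making explicit: Theorem \ref{thf2} shows the $R$-components of $\Stab(F_2)$ lie in the torus and that $\dim\Stab(F_2)=1$; that \emph{every} $t\in\C^*$ actually occurs (which you need to realize $B$, and the paper needs for the existence of $L_1$) follows since $\dim_a(F_2)=3$ makes the $R$-projection of the stabilizer injective, and the only one-dimensional algebraic subgroup of $\C^*$ is $\C^*$ itself.
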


\begin{proof}
Let $F \in O_{\C}(F_2)\cap \Omega_{\R^2}(2,2,2)$. Recall from Theorem \ref{thf2} and its proof that $C(F_2)$ consists of a point and a double point and $SI_{F_2}$ consists of a line and a double line. The double line passes through both singular points and the ordinary line crosses only through the ordinary point. Thus $SI_F$ is given by a real polynomial of degree $3$ which decomposes into a square of a polynomial of degree $1$ and a second polynomial of degree $1$. In particular the components of this polynomial cannot be conjugate and thus have to be real.

Now let $R_0\in GA_{\R}(2)$ be such that $S_{F\circ R_0}=S_{F_2}$ and $C(F\circ R_0)=C(F_2)$. Moreover let $L\in GA_{\C}(3)$, $R \in GA_{\C}(2)$ be such that $F_2=L \circ F\circ R_0 \circ R$. Observe that $R$ must be identity on the double line of $SI_{F_2}$ and must preserve the ordinary line of $SI_{F_2}$ together with the critical point on it. According to the proof of Theorem \ref{thf2} that implies that there is some $L_1\in GA_{\C}(3)$ such that $(L_1,R)\in\Stab(F_2)$. It follows that $F_2=L_1^{-1}\circ L \circ F\circ R_0$ and since $\R^3$ is the affine span of $F\circ R_0(\R^2)$ we obtain that $L_1^{-1}\circ L\in GA_{\R}(3)$. Thus $F\in O_{\R}(F_2)$.
\end{proof}

\begin{theorem}
$O_{\R}(F_3) = O_{\C}(F_3)\cap \Omega_{\R^2}(2,2,2)$.
\end{theorem}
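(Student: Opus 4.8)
The plan is to mirror the structure of the preceding two proofs, exploiting the fact that $F_3$ was characterized in Theorem \ref{thf3} by purely algebraic/scheme-theoretic conditions on $C(F)$ and $\mu(F)$, both of which are invariant under passage to the complex field. First I would take an arbitrary $F\in O_{\cc}(F_3)\cap\Omega_{\R^2}(2,2,2)$ and observe that, since $\mu$ is a topological invariant and $C(F)$ is defined by real polynomials, $F$ satisfies $\mu(F)=1$ and $C(F)$ is a single (triple) point; moreover that critical point is the common zero of the real polynomials $J_1(F),J_2(F),J_3(F)$, hence it is a \emph{real} point. In the same vein, $SI_F$ is cut out by a real cubic which, by Theorem \ref{thf3}, is (up to coordinates) a triple line; a triple line defined over $\R$ is necessarily a real line.

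Next I would produce a real change of coordinates normalizing the geometry. Choose $R_0\in GA_{\rr}(2)$ carrying the real line $SI_F$ to the line $\{x=0\}$ and the real critical point to the origin, so that $F\circ R_0$ has $SI_{F\circ R_0}=V(x^3)$ and $C(F\circ R_0)=V(x^2,xy,2y^2-x)$ (matching the data for $F_3$ computed at the end of the proof of Theorem \ref{thf3}). By Theorem \ref{thf3} applied over $\C$ there exist $L\in GA_{\cc}(3)$ and $R\in GA_{\cc}(2)$ with $F_3=L\circ(F\circ R_0)\circ R$. The residual $R$ must now stabilize the normalized self-intersection scheme $V(x^3)$ and the critical scheme; by the stabilizer computation in the proof of Theorem \ref{thf3} this forces $R=(\gamma^2 x,\beta x+\gamma y)$, i.e.\ $R\in\Stab(F_3)$ up to an element that can be absorbed. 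Hence there is $L_1$ with $(L_1,R)\in\Stab_{\cc}(F_3)$, and I can replace $(L,R)$ by $(L_1^{-1}L,\id)$, obtaining $F_3=(L_1^{-1}L)\circ(F\circ R_0)$ with a real right factor.

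Finally I would upgrade the left map to a real one. Since $\dim_q(F_3)=3$, the image $F_3(\R^2)$ is not contained in any proper affine subspace, so its real affine span is all of $\R^3$; as $F\circ R_0(\R^2)$ also spans $\R^3$ over $\R$, the linear isomorphism $L_1^{-1}\circ L$ carries a real spanning configuration to a real spanning configuration and is therefore determined by, and equal to, a real affine map. Thus $L_1^{-1}\circ L\in GA_{\rr}(3)$, giving $F\in O_{\rr}(F_3)$ and hence $O_{\cc}(F_3)\cap\Omega_{\R^2}(2,2,2)\subset O_{\rr}(F_3)$; the reverse inclusion is immediate. The main obstacle, and the step requiring genuine care, is the reality of the critical point and of the triple line: one must rule out the degenerate possibility that the real structure forces a nonreal normalization (as genuinely happens for $F_1$, whose complex orbit splits). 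Here the triple/multiplicity-one structure is rigid enough that no such splitting occurs, which is precisely what makes this case behave like $F_2$ rather than like $F_1$, and that rigidity is what I would have to argue explicitly.
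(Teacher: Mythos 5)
Your overall strategy is the same as the paper's: realify the critical data, normalize by a real $R_0$, absorb the residual complex right factor into $\Stab_{\C}(F_3)$, and realify the left factor using that $F\circ R_0(\R^2)$ affinely spans $\R^3$. Your reality arguments (the triple critical point is the unique common zero of the real polynomials $J_i(F)$, hence real; the triple line $SI_F$ is cut out by a real cubic $\ell^3$, hence $\ell$ is real up to scalar) are correct, and your final realification step is the paper's verbatim. However, there is one step you assert without justification, and it is exactly where the substance of this case sits: you claim that a real $R_0$ carrying $SI_F$ to $\{x=0\}$ and the critical point to the origin already yields $C(F\circ R_0)=V(x^2,xy,2y^2-x)$. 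That does not follow. Matching the reduced line and the reduced point leaves a residual modulus: as the paper notes at the start of its proof, for any $(L,R)\in\mathcal{GA}_{\C}(2,3)$ one has $Cs(L\circ F_3\circ R)=\lra{x_1^2,x_1y_1,ay_1^2-x_1}$ with $a\in\C^*$, so after your normalization you only know $Cs(F\circ R_0)=\lra{x^2,xy,by^2-x}$ for some $b\neq 2$ possibly. If $b$ is not normalized away, the residual $R$ in $F_3=L\circ(F\circ R_0)\circ R$ does \emph{not} preserve $Cs(F_3)$, it is not of the form $(\gamma^2x,\beta x+\gamma y)$, and your absorption into the stabilizer breaks down.

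The fix is short but must be said, and it is precisely the ``rigidity'' you deferred at the end: since $Cs(F)$ is spanned by the real polynomials $J_i(F)$, after the line/point normalization the modulus $b$ lies in $\R^*$, and the additional \emph{real} scaling $(x,y)\mapsto(\alpha x,\gamma y)$ with $\alpha=b\gamma^2/2$ achieves $Cs(F\circ R_0)=Cs(F_3)$ exactly (this is where the $F_3$ case differs from $F_1$, where the analogous real normalization can fail and the orbit splits). This is how the paper argues, working with the critical space $Cs$ throughout rather than with the pair $(SI_F, C(F))$ separately. One further point that both you and the paper pass over quickly: one needs the converse of the stabilizer computation in Theorem \ref{thf3}, namely that every $R_1=(\gamma^2x,\beta x+\gamma y)$ preserving $Cs(F_3)$ actually extends to some $(L_2,R_1)\in\Stab_{\C}(F_3)$; this is true and easy to verify directly, since $F_3\circ R_1=(\gamma^4x^2,\ \beta^2x^2+\gamma^2(y^2+x)+2\beta\gamma\,xy,\ \beta\gamma^2x^2+\gamma^3xy)$ is visibly an invertible linear combination of the components of $F_3$. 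With the scaling step and this verification inserted, your proof coincides with the paper's.
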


\begin{proof}
Recall that $Cs(F_3)=\lra{x^2,xy,2y^2-x}$. Let $(L,R)\in\mathcal{GA}_{\C}(2,3)$. Note that $Cs(L\circ F_3\circ R)=\lra{x_1^2,x_1y_1,ay_1^2-x_1}$, where $R(\{x_1=0\})=\{x=0\}$ and $R(\{y_1=0\})=\{y=0\}$ and $a\in\C^*$.

Now let $F \in O_{\C}(F_3)\cap \Omega_{\R}(2,2,2)$. We have $Cs(F)=\lra{x_2^2,x_2y_2,by_2^2-x_2}$, where $b\in\R^*$, $x_2=0$ is the equation of $SI_F$ (determined uniquely up to scalar multiplication) and $y_2=0$ is the equation of a line distinct from $\{x_2=0\}$ that passes through $C(F)$. Now we may find $R_0\in GA_{\R}(2)$ such that $Cs(F\circ R_0)=Cs(F_3)$. Moreover, there are $(L_1,R_1)\in\mathcal{GA}_{\C}(2,3)$ such that $F_3=L_1\circ F\circ R_0\circ R_1$. Observe that $R_1$ must preserve $Cs(F_3)$ and according to the proof of Theorem \ref{thf3} that implies that there is some $L_2\in GA_{\C}(3)$ such that $(L_2,R_1)\in\Stab(F_3)$. It follows that $F_3=L_2^{-1}\circ L_1 \circ F\circ R_0$ and since $\R^3$ is the affine span of $F\circ R_0(\R^2)$ we obtain that $L_2^{-1}\circ L_1\in GA_{\R}(3)$. Thus $F\in O_{\R}(F_3)$.
\end{proof}

\begin{theorem}
$O_{\R}(F_4) = O_{\C}(F_4)\cap \Omega_{\R^2}(2,2,2)$.
\end{theorem}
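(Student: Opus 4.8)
The plan is to exploit the explicit reduction to $\Omega_1$ carried out before Theorem \ref{thf2}, observing that for the mapping $F_4$ this reduction can be performed entirely over $\R$. First I would note that $\dim_q$ is a linear invariant and $\dim_q(F_4)=3$, so every mapping in $O_{\C}(F_4)$ has $\dim_q=3$, i.e. $\det\Phi_1\neq 0$. In particular our real mapping $F\in O_{\C}(F_4)\cap\Omega_{\R^2}(2,2,2)$ satisfies $\det\Phi_1(F)\neq 0$, and since $F$ is real the matrix $\Phi_1(F)$ is a real invertible matrix.

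Next I would run the normalization $F\mapsto F'\in\Omega_1$ exactly as in the construction of $\Theta$ preceding Theorem \ref{thf2}. The crucial observation is that this normalization uses only the real linear map associated with $\Phi_1(F)^{-1}$, followed by the real translation $R(x,y)=(x-\frac{d_1'}{2},y-\frac{e_2'}{2})$ and the affine map $L'$, all of whose coefficients are real rational expressions in the (real) coefficients of $F$. Unlike the subsequent passage $\Theta_1:\Omega_1\to\Omega_2$, it requires no cube roots, so no field extension is forced. Hence $\Theta$ sends our real $F$ to a real mapping $F''=\Theta(F)\in\Omega_1\cap\Omega_{\R^2}(2,2,2)$, and because every transformation used is real we have $F''\in O_{\R}(F)$.

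It then remains to identify $F''$. Since $\Theta$ acts as the identity on $\Omega_1$ (for $G\in\Omega_1$ one has $\Phi_1(G)=\id$ and the auxiliary parameters $d_1',e_2',g_i'$ all vanish, so $R=L'=\id$), and since by the proof of Theorem \ref{thf4} we have $O_{\C}(F_4)=\Theta^{-1}(V(e_1'',d_2'',d_3'',e_3''))=\Theta^{-1}(\{F_4\})$, the intersection $\Omega_1\cap O_{\C}(F_4)$ reduces to the single point $\{F_4\}$. Now $F''\in O_{\R}(F)\subseteq O_{\C}(F)=O_{\C}(F_4)$ and $F''\in\Omega_1$, whence $F''=F_4$. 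Consequently $F\in O_{\R}(F'')=O_{\R}(F_4)$, which gives the inclusion $O_{\C}(F_4)\cap\Omega_{\R^2}(2,2,2)\subseteq O_{\R}(F_4)$; the reverse inclusion is trivial, as noted at the beginning of the section.

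The one genuinely delicate point is the second paragraph: one must verify that every step of the $\Omega_1$-normalization remains over $\R$, i.e. that invertibility of $\Phi_1(F)$ over $\R$ together with real translations suffices and that the root-extracting step $\Theta_1$ is never invoked. This is precisely the feature that prevents $O_{\C}(F_4)$ from splitting into two real orbits, in contrast with the cases such as $F_1$ or $F_{13}$ where the reduction forces an extraction of roots and two real forms appear.
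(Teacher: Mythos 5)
Your proof is correct, but it takes a different route from the paper's. The paper argues directly and geometrically: it picks $R_0\in GA_{\R}(2)$ translating the critical point of $F$ (which is real, being cut out by real equations and unique) to the origin and $L_0\in GA_{\R}(3)$ translating the image of the origin to the origin; then the components of $L_0\circ F\circ R_0$ must be homogeneous of degree $2$ and linearly independent (as $\dim_q=3$), so some real $L_1\in\GL(\R^3)$ carries them to $(x^2,y^2,xy)$, since both triples are bases of the $3$--dimensional real space of quadratic forms in $x,y$. You instead reuse the normalization $\Theta:\Omega_{\C^2}(2,2,2)\setminus V(\det\Phi_1)\to\Omega_1$ together with the identity $O_{\C}(F_4)=\Theta^{-1}(V(e_1'',d_2'',d_3'',e_3''))=\Theta^{-1}(\{F_4\})$ from the proof of Theorem \ref{thf4}. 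The delicate point you flag does check out: $L$ comes from the real invertible matrix $\Phi_1(F)^{-1}$, and the coefficients of $R$ and $L'$ are polynomial (over $\Q$) in the real coefficients of $L\circ F$, so no root extraction occurs and all three transformations lie in the real affine groups; hence $\Theta(F)\in O_{\R}(F)$ and $\Theta(F)=F_4$. (A small simplification: you do not need $\Theta|_{\Omega_1}=\id$ or the identification of $\Omega_1\cap O_{\C}(F_4)$ --- since $F\in O_{\C}(F_4)=\Theta^{-1}(\{F_4\})$, you get $\Theta(F)=F_4$ immediately.) Both proofs rest on the same underlying phenomenon --- the normal form $F_4$ is reachable by transformations rational over the ground field --- but the paper's version is more self-contained and makes the geometry (reality of the critical point) explicit, while yours is shorter given the $\Theta$-machinery and has the added virtue of explaining structurally why this complex orbit does not split into two real ones, in contrast with orbits whose normalization forces extraction of roots (such as $F_1$ versus $F_{1'}$ or $F_{13}$ versus $F_{13'}$).
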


\begin{proof}
Let $F\in O_{\C}(F_4)\cap \Omega_{\R}(2,2,2)$. Take $R_0\in GA_{\R}(2)$ such that $C(F\circ R_0)=C(F_4)=(0,0)$ and $L_0\in GA_{\R}(3)$ be such that $L_0\circ F\circ R_0(0,0)=(0,0)$. Let $(f_1,f_2,f_3)=L_0\circ F\circ R_0$ and observe that $f_1$, $f_2$ and $f_3$ must be homogeneous of degree $2$, moreover, they must be linearly independent. Hence there is some $L_1\in\GL(\R^3)$ such that $L_1\circ(f_1,f_2,f_3)=(x^2,y^2,xy)$. Thus $F\in O_{\R}(F_4)$.
\end{proof}

\begin{theorem}
$O_{\R}(F_5) = O_{\C}(F_5)\cap \Omega_{\R^2}(2,2,2)$.
\end{theorem}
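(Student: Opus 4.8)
The inclusion $O_{\R}(F_5)\subseteq O_{\C}(F_5)\cap\Omega_{\R^2}(2,2,2)$ is immediate, since a real linear equivalence is in particular a complex one, so the whole content is the reverse inclusion. The plan is to take an arbitrary real $F\in O_{\C}(F_5)\cap\Omega_{\R^2}(2,2,2)$ and produce a \emph{real} linear equivalence to $F_5$, following the same scheme as in the preceding theorems for $F_2$, $F_3$ and $F_4$: first normalize $F$ by a real transformation so that it shares the relevant geometric invariants with $F_5$, then use the description of $\Stab(F_5)$ from the proof of Theorem \ref{thf5} to absorb the remaining (a priori complex) ambiguity, and finally invoke the reality of the affine span to force the target map to be real.

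First I would record the real invariants of such an $F$. Since $F\in O_{\C}(F_5)$, its critical set is a single reduced point; as $C(F)=V(J_1(F),J_2(F),J_3(F))$ is cut out by real polynomials and is a single point, it is a real point, which a real translation moves to the origin. Likewise $SI_F$ is a single line over $\C$; being the zero set of real data it is invariant under complex conjugation, and a single line equal to its own conjugate is real, so $SI_F$ is a real line, and (as for $F_5$) it passes through the critical point. Finally $Cs(F)$ is a real $3$-dimensional space whose complexification equals $\langle x,y,xy\rangle$, so its degree-two part is spanned by a single real binary quadratic of rank $2$. Recall from the discussion of $Cs$ preceding Theorem \ref{thf1} that $Cs$ is unchanged by the target map $L$ and transforms by substitution under the source map $R$, so the real-congruence type of this quadratic (in particular its signature) is a genuine invariant of the real orbit.

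The crux, and the step I expect to be the main obstacle, is to find a real $R_0\in GA_{\R}(2)$ with $Cs(F\circ R_0)=Cs(F_5)=\langle x,y,xy\rangle$ and $SI_{F\circ R_0}=SI_{F_5}=\{x+y=0\}$. This is exactly the point at which the real and complex pictures could diverge, as they do for $F_1$: one must check that the rank-$2$ quadratic generator of $Cs(F)$ is of \emph{indefinite} type, i.e. factors into two \emph{real} lines, so that a real change of source coordinates carries it to $xy$; I expect this to follow from the reality of $SI_F$ together with the way the lines $V(J_1(F))$ and $V(J_2(F))$ meet the self-intersection line, forcing them to be real rather than a complex-conjugate pair. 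Granting this normalization the argument closes just as for $F_2$ and $F_3$: there are $(L_1,R_1)\in\mathcal{GA}_{\C}(2,3)$ with $F_5=L_1\circ F\circ R_0\circ R_1$, and since $R_1$ then preserves $Cs(F_5)$ and $SI_{F_5}$, the computation in the proof of Theorem \ref{thf5} shows $R_1=(\alpha x,\alpha y)$ or $(\alpha y,\alpha x)$, hence $(L_2,R_1)\in\Stab(F_5)$ for some $L_2\in GA_{\C}(3)$. Then $F_5=L_2^{-1}\circ L_1\circ F\circ R_0$ with $R_0$ real, and because $\dim_a(F_5)=3$ the set $F\circ R_0(\R^2)$ affinely spans $\R^3$, forcing $L_2^{-1}\circ L_1\in GA_{\R}(3)$; therefore $F\in O_{\R}(F_5)$, as desired.
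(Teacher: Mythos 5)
Your proposal follows the paper's own proof almost step for step: the paper likewise picks a real $R_0$ with $Cs(F\circ R_0)=\langle x,y,xy\rangle=Cs(F_5)$ and $SI_{F\circ R_0}=\{x+y=0\}$, then argues that any residual complex $R$ with $F_5=L\circ F\circ R_0\circ R$ must fix the critical point and the pair $\{(1:0),(0:1)\}$ and preserve $\{x+y=0\}$, hence lies under an element of $\Stab(F_5)$, and finally uses the affine span of $F\circ R_0(\R^2)$ to force the target map to be real. So the closing portion of your argument is exactly the paper's. The one place where you diverge is that you explicitly flag, as the crux, the step both arguments genuinely need: that the rank-two quadratic generator of $Cs(F)$ is \emph{indefinite} over $\R$, equivalently that the pair of points at infinity determined by $Cs(F)$ is real, so that a real $R_0$ can carry $\{(1:0),(0:1)\}$ to it. You leave this as an expectation; the paper does not address it at all --- it silently chooses $R_0$ "sending the pair $\{(1:0),(0:1)\}$ to the pair determined by $Cs(F)$", which presupposes that pair is real.

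That step is a genuine gap, and in fact it fails. Take $R(x,y)=(x+iy,\,x-iy)$ and $L(u,v,w)=\bigl(\frac{u+v}{2},\frac{u-v}{2i},\frac{w}{2}\bigr)$; then $G=L\circ F_5\circ R=(x^2-y^2,\,2xy,\,x)$ is a real mapping in $O_{\C}(F_5)\cap\Omega_{\R^2}(2,2,2)$ with $J_1(G)=-2x$, $J_2(G)=-2y$, $J_3(G)=4x^2+4y^2$, so $Cs(G)=\langle x,y,x^2+y^2\rangle$ has a \emph{definite} generator, even though the critical point $(0,0)$, the line $SI_G=\{x=0\}$, and both lines $V(J_1(G))$, $V(J_2(G))$ are all real. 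Hence your hoped-for deduction of indefiniteness from the reality of $SI_F$ and of the $V(J_i(F))$ cannot work. Worse, your own (correct) preliminary observation --- that $Cs$ is unchanged by the target map and transforms by substitution under the source map, so the real congruence class of the quadratic generator is an invariant of the real orbit --- shows the gap is unfillable: $(x^2+y^2)\circ A$ is definite for every invertible real $A$, while $xy$ is indefinite, so $G\notin O_{\R}(F_5)$. Thus $O_{\C}(F_5)\cap\Omega_{\R^2}(2,2,2)$ splits into two real orbits, exactly as happens for $F_7/F_{7'}$ and $F_{13}/F_{13'}$ (where the same definite-versus-indefinite dichotomy is visible in $Cs$), with $(x^2-y^2,2xy,x)$ representing the missing orbit. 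In other words, the statement as printed, together with the paper's proof, breaks down at precisely the point you singled out; your instinct to isolate that step was right, but the correct conclusion from your own invariant is a counterexample rather than a proof.
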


\begin{proof}
Let $F\in O_{\C}(F_5)\cap \Omega_{\R}(2,2,2)$. Note that $Cs(F)$ determines the singular point of $F$ and a pair of points at infinity. Let $R_0\in GA_{\R}(2)$ be such that $R_0(0,0)$ is the critical point of $F$ and $R_0$ sends the pair $\{(1:0),(0:1)\}$ to the pair determined by $Cs(F)$. Consequently we obtain $Cs(F\circ R_0)=\lra{x,y,xy}=Cs(F_5)$. We may additionally require that the line $SI_{F\circ R_0}$ is given by the equation $x+y=0$, i.e. that $SI_{F\circ R_0}=SI_{F_5}$. Now let $(L,R)\in\mathcal{GA}_{\C}(2,3)$ be such that $F_5=L\circ F\circ R_0\circ R$. Since $R$ must preserve the critical point $(0,0)$ and the pair $\{(1:0),(0:1)\}$ we must have either $R(x,y)=(ax,by)$ or $R(x,y)=(ay,bx)$ for some $a,b\in\C^*$. 
Without loss of generality we may assume it is the former. Moreover, since $R$ must also preserve the self-intersection curve $\{x+y=0\}$ we must have $a=b$. We have $F_5\circ R^{-1}=L_1\circ F_5$ for $L_1(x,y,z)=(a^{-2}x,a^{-2}y,a^{-1}x)$. Thus $L\circ L_1\circ F_5 = F\circ R_0$ and since $\R^3$ is the affine span of $F\circ R_0(\R^2)$ we obtain that $L\circ L_1\in GA_{\R}(3)$. Thus $F\in O_{\R}(F_3)$.
\end{proof}

\begin{theorem}
$O_{\R}(F_6) = O_{\C}(F_6)\cap \Omega_{\R^2}(2,2,2)$.
\end{theorem}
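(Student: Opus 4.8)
The plan is to follow the template of the preceding real theorems: starting from a real $F\in O_{\C}(F_6)\cap\Omega_{\R^2}(2,2,2)$, I would first produce a \emph{real} $R_0\in GA_{\R}(2)$ moving the distinguished data carried by $Cs(F)$ into the standard position $Cs(F_6)=\lra{1,y,xy}$, and then show that the residual complex equivalence can be replaced by a real one. The genuinely new feature here, compared with the arguments for $F_5$, etc., is that $C(F_6)=\emptyset$ and $SI_{F_6}=\emptyset$, so all the rigidifying information must be extracted from the critical space alone.

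The first step is to read off the real invariants hidden in $Cs(F)$. Since $F$ is real, $Cs(F)$ is a real $3$--dimensional space, and because it is $\C$--equivalent (through the right factor of the complex equivalence, which acts on $Cs$ by substitution while the left factor fixes it) to $\lra{1,y,xy}$, it contains the constants, a genuine affine--linear element $h_1$ with linear part $\ell$, and a genuine quadratic element $h_2$ whose degree--$2$ part $q$ has rank $2$. Transporting the relation $y\mid xy$ from $Cs(F_6)$ through the substitution shows $\ell\mid q$, say $q=\ell\cdot m$. The decisive point is that $q$ and $\ell$ are \emph{real}, whence $m=q/\ell$ is real as well; thus $q$ splits into two real lines and cannot be of elliptic type. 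This is exactly why the complex orbit of $F_6$ does not split over $\R$ (contrast $F_{17}$, whose critical space carries no quadratic element, so that the ambiguity between $xy$ and $x^2+y^2$ genuinely produces two real orbits). Consequently a real linear change pulling back $y$ to $\ell$ and $x$ to $m$, followed by a real translation absorbing the lower order terms, yields a real $R_0$ with $Cs(F\circ R_0)=\lra{1,y,xy}=Cs(F_6)$.

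Write $G=F\circ R_0$, a real map with $Cs(G)=Cs(F_6)$, and pick a complex equivalence $F_6=L\circ G\circ R$. Since the left factor preserves $Cs$ and the right factor acts by substitution, $R$ must preserve $\lra{1,y,xy}$, so by the stabilizer computation in the proof of Theorem \ref{thf6} we have $R=(\alpha_1 x+\alpha_2,\beta_1 y)$. Now I would exploit reality. Conjugating the identity $F_6=L\circ G\circ R$ coefficientwise (both $F_6$ and $G$ are real) gives $F_6=\overline L\circ G\circ\overline R$, and comparing the two expressions shows that $(\overline L^{-1}L,\ R\circ\overline R^{-1})\in\Stab(G)$. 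Because $\Stab(G)$ is conjugate to $\Stab(F_6)$ through $R$, a short computation shows its right factors still have the shape $(sx+c,s^2y)$; applied to $R\circ\overline R^{-1}=\bigl(\tfrac{\alpha_1}{\overline\alpha_1}x+c,\tfrac{\beta_1}{\overline\beta_1}y\bigr)$ this yields $\beta_1/\overline\beta_1=(\alpha_1/\overline\alpha_1)^2$, i.e. $\arg\beta_1\equiv 2\arg\alpha_1\pmod\pi$.

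Finally I would use this relation to straighten $R$. Replacing $(L,R)$ by $(SL,\ R\circ U)$ for the right factor $U=(sx+t,s^2y)$ of a suitable $\Stab(F_6)$--element keeps $G$ conjugated to $F_6$ and changes the two scalings of $R$ to $\alpha_1 s$ and $\beta_1 s^2$; the argument relation above is precisely what permits a single $s$ to make both real, after which $t$ can be chosen to make the constant real too. Thus $R\circ U\in GA_{\R}(2)$, so $G\circ(R\circ U)$ is real; since its image affinely spans $\R^3$ (as $\dim_a(G)=\dim_a(F_6)=3$), the remaining left factor $SL$ is determined by real values on a real affine frame and hence lies in $GA_{\R}(3)$. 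This exhibits $G$, and therefore $F=G\circ R_0^{-1}$, as real--equivalent to $F_6$, giving $O_{\R}(F_6)=O_{\C}(F_6)\cap\Omega_{\R^2}(2,2,2)$. The main obstacle is this last rigidification: with $C(F_6)$ and $SI_{F_6}$ both empty, the constraint $\beta_1=\alpha_1^2$ needed to realify $R$ cannot be recovered from a geometric configuration as in the earlier theorems, and must instead be forced out of the stabilizer/conjugation analysis.
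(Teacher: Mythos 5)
Your proof is correct, but it takes a genuinely different route from the paper's. The paper does not touch the critical space in this case: it uses $\dim_q(F)=2$ to find a real left factor $L$ with $L\circ F\in\Omega_{\R^2}(2,2,1)$, then a real shear $L_1=(x+az,y+bz,z)$ with generic real $a,b$ making the critical conic of the first two components a hyperbola, invokes the real classification of $\Omega_{\R^2}(2,2)$ from \cite{fj} to reduce to $F'=(x^2+y,y^2+x,cx+dy)$ or $F'=(x^2-y^2+x,2xy-y,cx+dy)$ with $c,d\in\R$, and finally kills the second branch and the degenerate values of $(c,d)$ by critical-set comparisons with $O_{\C}(F_5)$ and $O_{\C}(F_7)$. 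You instead extend to $F_6$ the template the paper uses for $F_2$, $F_3$ and $F_5$ (normalize a real invariant by a real $R_0$, then realify the residual complex equivalence) --- a template the paper abandons for $F_6$ precisely because $C(F_6)=SI_{F_6}=\emptyset$. Your two key points are sound: (i) since $Cs(F)$ is a real space whose degree filtration is defined over $\R$, the quadratic element has real degree-two part $q$ with $\ell\mid q$ and real quotient $m=q/\ell$, so $q$ is hyperbolic and the elliptic real form is excluded; this is indeed the conceptual reason the orbit does not split, in contrast with $F_{17}$, whose critical space $\lra{1,x,y}$ carries no quadratic element; and (ii) because $\Stab(F_6)$ is positive-dimensional and no geometric configuration pins down $R$, you replace geometric rigidification by a descent argument: conjugating $F_6=L\circ G\circ R$ coefficientwise gives $(\overline{L}^{-1}L,\,R\circ\overline{R}^{-1})\in\Stab(G)$, and since conjugating $(sx+t,s^2y)$ by $R=(\alpha_1x+\alpha_2,\beta_1y)$ again yields maps of the form $(sx+c,s^2y)$, one gets $\beta_1/\overline{\beta_1}=(\alpha_1/\overline{\alpha_1})^2$, which is exactly what allows a single stabilizer element $U=(sx+t,s^2y)$ to make $R\circ U$ real. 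I verified the computations, including the step you only sketch (writing $h_2=(\ell+B)(m+A)+C$ with $A,B,C\in\R$ so that one real affine change sends $Cs(F)$ to $\lra{1,y,xy}$ exactly). As for what each approach buys: the paper's proof is shorter because it can cite the planar real classification wholesale; yours is self-contained at this point of the paper (only the $Cs(F_6)$ stabilizer computation from Theorem \ref{thf6} is needed), and the cocycle-triviality step explains non-splitting structurally rather than by case elimination.
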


\begin{proof}
Let $F=(f_1,f_2,f_3)\in O_{\C}(F_6)\cap \Omega_{\R}(2,2,2)$. Since $F_6\in \Omega_{\R}(2,2,1)$ the homogeneous parts of degree $2$ of $f_1$, $f_2$ and $f_3$ are not linearly independent, hence there is an $L\in GA_{\R}(3)$ such that $L\circ F\in \Omega_{\R}(2,2,1)$. Moreover there are $a,b\in\C$ such that for $L_1=(x+az,y+bz,z)$ the set $V_{\C}(J_3(L_1\circ L\circ F))$ is a hyperbola. Consequently $V_{\C}(J_3(L_1\circ L\circ F))$ is a hyperbola for a generic choice of $a,b\in\C$ and also for a generic choice of $a,b\in\R$. Thus we obtain $L_1\circ L\circ F=(f_1,f_2,f_3)\in\Omega_{\R}(2,2,1)$ with $V(f_1,f_2)$ a hyperbola. By \cite{fj} there is $(L_2,R)\in\mathcal{GA}_{\R}(2,2)$ such that either $L_2\circ(f_1,f_2)\circ R=(x^2+y,y^2+x)$ or $L_2\circ(f_1,f_2)\circ R=(x^2-y^2+x,2xy-y)$. Let $L_3(x,y,z)=(L_2(x,y),z)$, we obtain two possibilities:

(1) $F'=L_3\circ L_1\circ L\circ F\circ R=(x^2+y,y^2+x,cx+dy)$ for some $c,d\in\R$. By Theorem \ref{thf6} either $c=0$ and $d\in\R^*$ or $d=0$ and $c\in\R^*$. In both cases $F'\in O_{\R}(F_6)$ hence $F\in O_{\R}(F_6)$.

(2) $F'=L_3\circ L_1\circ L\circ F\circ R=(x^2-y^2+x,2xy-y,cx+dy)$ for some $c,d\in\R$. If $c=d=0$ then $F'\in O_{\C}(F_7)$, a contradiction. Moreover, if $c^2+d^2\neq 0$ then $C(F')=\left(\frac{c^2-d^2}{2(c^2+d^2)},\frac{-cd}{2(c^2+d^2)}\right)$ so $F'\in O_{\C}(F_5)$, again a contradiction. Thus (2) is not possible and we have (1) and $F\in O_{\R}(F_6)$.
\end{proof}

\begin{re}
Observe that $O_{\R}(F_i) = O_{\C}(F_i)\cap \Omega_{\R^2}(2,2,2)$ for $i=8,9,10,11,12,14,15$. We have $O_{\R}(x^2+y,xy) = O_{\C}(x^2+y,xy)\cap \Omega_{\R^2}(2,2)$, $O_{\R}(x^2+y,y^2) = O_{\C}(x^2+y,y^2)\cap \Omega_{\R^2}(2,2)$ and $O_{\R}(x^2-x,xy) = O_{\C}(x^2-x,xy)\cap \Omega_{\R^2}(2,2)$ so the analysis we carried out in the complex case can be repeated in the real case.
\end{re}

\begin{re}
Observe that for $i=7,13,16,19,\ldots,29$ the mapping $F_i$ is a composition of a mapping in $\Omega_{\R^2}(2,2)$ and the standard inclusion $\R^2\rightarrow\R^3$. Thus from \cite{fj} we obtain that $O_{\R}(F_i) = O_{\C}(F_i)\cap \Omega_{\R^2}(2,2,2)$ for $i=16,20,\ldots,24,26,\ldots,29$ and $O_{\C}(F_i)\cap \Omega_{\R^2}(2,2,2)=O_{\R}(F_i)\cup O_{\R}(F_{i'})$ for $i=7,13,19,25$.
\end{re}

\begin{re}
Note that $F_{17}$ and $F_{18}$ represent the situation when $F=(f_1,x,y)$ and $f_1$ is homogeneous of degree $2$. Over $\C$ we have $F\in O(F_{17})$ if $f_1$ is not a square and $F\in O(F_{18})$ if $f_1$ is a square. Over $\R$ we additionally obtain $F\in O(F_{17'})$ if $f_1$ is neither a square nor a product.
\end{re}

\section{Higher dimensions}\label{secHD}

First note that for $n\geq 5$ the orbit $O(x^2,xy,y^2,x,y,0,\ldots,0)$ with respect to the $\mathcal{GA}(2,n)$ action is open and dense in the subspace $\Omega_{\C^2}(2,\ldots,2)$ of polynomial mappings $\C^2\to\C^n$. Its complement is the set $\mathcal{GA}(2,n)\Omega_{\C^2}(2,2,2,1,0,\ldots,0)$. Thus to fully classify all polynomial mappings $\C^2\to\C^n$ with degrees at most $2$ we only need to examine $\Omega_{\C^2}(2,2,2,1)$. Moreover, having classified all the orbits in $\Omega_{\C^2}(2,2,2)$ we can restrict ourselves to $\Omega_{\C^2}(2,2,2,1)\setminus\mathcal{GA}(2,4)\Omega_{\C^2}(2,2,2,0)$. This set consists of only $3$ orbits whose representatives are:

\begin{itemize}

\item[(1)] (generic case) $G_1=(x^2+y,y^2,xy,x)$ which is an immersion. Moreover, $\dim O(G_1)=24$.

\item[(2)] $G_2=(x^2,y^2,xy,x)$ with singular point $(0,0)$ and $SI_{G_2}=\{x=0\}$. Moreover, $\dim O(G_2)=23$.

\item[(3)] $G_3=(x^2,y^2,x,y)$ which is an immersion. Moreover, $\dim O(G_3)=22$.

\item[(4)] $G_4=(x^2,xy,x,y)$ which is an immersion. Moreover, $\dim O(G_4)=21$.

\end{itemize}

We will now provide a brief argument why those are the only orbits in $\Omega_{\C^2}(2,2,2,1)\setminus\mathcal{GA}(2,4)\Omega_{\C^2}(2,2,2,0)$. First assume that $$G=(g_1,g_2,g_3,g_4)\in \Omega_{\C^2}(2,2,2,1)\setminus\mathcal{GA}(2,4)\Omega_{\C^2}(2,2,1,1).$$ Let $x=g_4$ and $y$ be linearly independent from $x$. Since $G\notin\mathcal{GA}(2,4)\Omega_{\C^2}(2,2,1,1)$ the quadratic parts of $g_1$, $g_2$ and $g_3$ are independent, so we may assume that they are $x^2$, $y^2$ and $xy$. So $G=(x^2+ay,y^2+by,xy+cy,x)$ for some $a,b,c\in\C$. Taking $R(x,y)=(x-c,y-\frac{b}{2})$ and $L(x,y,z,w)=(x+2cw+c^2+\frac{ab}{2},y+\frac{b^2}{4},z+\frac{bw+bc}{2},w+c)$ we obtain $L\circ G\circ R=(x^2+ay,y^2,xy,x)$. So if $a=0$ then $G\in O(G_2)$ and if $a\neq 0$ then by taking $R'(x,y)=(ax,ay)$ and $L'(x,y,z,w)=(a^{-2}x,a^{-2}y,a^{-2}z,a^{-1}w)$ we obtain $G\in O(G_1)$. Moreover, the orbit $O(G_2)$ is contained in the closure of the orbit $O(G_1)$ because $(x^2+y/n, y^2, xy, x)\in O(G_1)$ for every $n$. Furthermore, $O(G_1)\cup O(G_2)=\{G\in \Omega_{\C^2}(2,2,2,2):\ \dim_q(G)\geq 3\}$ is open. Consequently the orbit $O(G_1)$ is open, hence $\dim O(G_1)=24$.

To see that $\dim O(G_2)=23$ observe that if $(L,R)\in\Stab(G_2)$ then $R$ preserves $C(G_2)$ and $SI_{G_2}$ so $R(x,y)=(\alpha x,\beta_1 x+\beta_2 y)$ for $\alpha,\beta_2\in\C^*$ and $\beta_1\in\C$. Moreover for each such $R$ there is precisely one $L$ such that $(L,R)\in\Stab(G_2)$. Hence $\dim\Stab(G_2)=3$ and $\dim O(G_2)=23$.

Now assume that $G=(g_1,g_2,g_3,g_4)\in \Omega_{\C^2}(2,2,1,1)\setminus\mathcal{GA}(2,4)\Omega_{\C^2}(2,2,2,0)$. Since $g_3$ and $g_4$ are linear and linearly independent we may choose them to be any basis of $\lra{x,y}$. Furthermore by subtracting suitable combinations of $g_3$ and $g_4$ we may assume that $g_1$ and $g_2$ are homogeneous of degree $2$. In particular the set $C(g_1,g_2)$ and $\Delta(g_1,g_2)$ are  curves which are cones. Checking the list in \cite{fj} we obtain that $(g_1,g_2)\in O(x^2,y^2)$ or $(g_1, g_2)\in O(x^2, xy)$ are the only possibilities. Thus $G\in O(G_3)$ or $G\in O(G_4).$ Observe also that if $(L,R)\in\Stab(G_3)$ then $R$ preserves the vector space $\lra{1,x,y,x^2,y^2}$ so $R(x,y)=(\alpha_1x+\alpha_2,\beta_1y+\beta_2)$ for $\alpha_1,\beta_1\in\C^*$ and $\alpha_2,\beta_2\in\C$. Moreover for each such $R$ there is precisely one $L$ such that $(L,R)\in\Stab(G_2)$. Hence $\dim\Stab(G_3)=4$ and $\dim O(G_3)=22$. 

Similarly, if $(L,R)\in\Stab(G_4)$ then $R$ preserves the vector space $\lra{1,x,y,x^2,xy}$ so $R(x,y)=(\alpha_1x+\alpha_2,\beta_1y+\beta_2x+\beta_3)$ for $\alpha_1,\beta_1\in\C^*$ and $\alpha_2,\beta_2,\beta_3\in\C$. Moreover for each such $R$ there is precisely one $L$ such that $(L,R)\in\Stab(G_4)$, hence $\dim\Stab(G_4)=5$ and $\dim O(G_4)=21.$

Finally, let us note that the analysis above is also correct for real polynomial mappings, with one exception. The list in \cite{fj} with real mappings contains three homogeneous polynomials of degree $(2,2)$, namely $(x^2,y^2)$, $(x^2-y^2,xy)$ and $(x^2,xy)$. For the first two mappings the components do not have a common factor, for the third mapping they do. Consequently $O_{\C}(G_{3})\cap \Omega_{\R^2}(2,2,2,2)$ consists of two orbits: $O_{\R}(G_{3})$ and $O_{\R}(G_{3'})$ for $G_{3'}=(x^2-y^2,xy,x,y)$. Moreover, $O_{\C}(G_{4})\cap \Omega_{\R^2}(2,2,2,2)=O_{\R}(G_{4})$.

\section{Containment of closures of orbits}\label{secOCC}

In this section we show for which orbits the smaller is contained in the closure of the larger. The results are presented graphically in Figure  \ref{Picture1}. We omit most of the trivial cases and the cases when mappings in both orbits have $\dim_a$ at most $2$, which are covered in the paper \cite{fj}. When containment does hold we simply provide a sequence contained in one orbit that converges to a point in the other (with respect to the standard norm on $\C^N$). When containment does not hold we assume that there is a mapping in the larger orbit that is close to a mapping in the smaller orbit and arrive at a contradiction. We were tempted to present a presumably more elegant method - to give an equation for the larger orbit and check that it does not hold for a mapping from the smaller orbit, however the equations in question are quite large and not elegant at all.

$O(G_2)\ni(x^2,y^2,\frac{xy}{n}+y,x)\to(x^2,y^2,y,x)\in O(G_3)$

$O(G_3)\ni(x^2-\frac{y^2}{n},xy,x,y)\to(x^2,xy,x,y)\in O(G_4)$

Observe that the quadratic parts of components of mappings in $O(G_4)$ have a common factor, since the same does not hold for $O(F_4)$ we see that $O(F_{4})\not\subset \overline{O(G_4)}$. Similarly $O(F_{11})$, $O(F_7)$ and $O(F_{12})$ are not contained in $\overline{O(G_4)}$, $O(F_{12})$ and $O(F_{13})$ are not contained in $\overline{O(F_9)}$ and $O(F_{13})\not\subset \overline{O(F_{14})}$.

$O(F_2)\ni(x^2+y,y^2+x,\frac{axy}{n}+x+\frac{y}{n})\to(x^2+y,y^2+x,x)\in O(F_6)$ for some $a$ close to $\frac{1}{2n}$ satisfying $32(1-\frac{1}{n^3})a=\frac{16}{n}-\frac{72}{n^2}a^2-\frac{27}{n^3}a^4$

$O(F_2)\ni(x^2,\frac{y^2}{n}+\frac{x}{n}+y,xy)\to(x^2,y,xy)\in O(F_8)$

$O(F_5)\ni(x^2,\frac{y^2}{n}+xy,y)\to(x^2,xy,y)\in O(F_8)$

Now we will show that $O(F_{11})\not\subset \overline{O(F_3)}$. To this end we will show, that there is no $F\in O(F_3)$ which is close to $F_{11}$. Suppose that $F=(f_1,f_2,f_3)\in O(F_3)$ with $f_i=a_ix^2+b_ixy+c_iy^2+d_ix+e_iy+g_i$ is close to $F_{11}$. By taking $L\circ F$ for suitable $L$ we may additionally assume that $a_1=1$ and also $a_2=a_3=0$. Similarly we may assume that also $c_2=1$, $c_1=c_3=0$, $e_3=1$ and $e_1=e_2=0$, so $F=(x^2+b_1xy+e_1x,y^2+b_2xy+e_2x,y+b_3xy+e_3x)$. Note that since $F\in O(F_3)$ we must have $b_3\neq 0$. Thus $F$ is equivalent to $F'=(x^2+Ay,y^2+Bx,xy+Cx+Dy)$ for $b_3A=-b_1$, $b_3B=b_3e_2-b_2e_3$, $2b_3C=2e_3+b_2$ and $2b_3D=2+b_1e_3-b_3e_1$. From Theorem \ref{thf3} we know that $C(F')=V(4xy-AB,2x^2+2Dx-Ay-AC,Bx+BD-2y^2-2Cy)$ is a triple point. Hence if $AB=0$ then $C=D=0$, in particular $b_3e_1-b_1e_3=2$ which is impossible since $b_1,b_3,e_1,e_3$ are close to $0$. Thus $AB\neq 0$ and $F$ is equivalent to $(x^2+y,y^2+x,xy+C(AB^2)^{-\frac{1}{3}}x+D(A^2B)^{-\frac{1}{3}}y)$. From Theorem \ref{thf3} we obtain that $-8D^3=27A^2B$ which is a contradiction since $-64$ is not close to $0$.

$O(F_3)\ni(x^2+y-\frac{3y^2}{n^2},\frac{y^2}{n^3}+x,2xy+\frac{3y^2}{n})\to(x^2+y,x,2xy)\in O(F_9)$

$O(F_3)\ni(x^2+y,y^2+x,\frac{xy}{n}+\frac{3x}{2n}+\frac{3y}{2n})\to(x^2+y,y^2+x,0)\in O(F_7)$

$O(F_6)\ni(x^2+y,\frac{y^2}{n}+xy,x)\to(x^2+y,xy,x)\in O(F_9)$

$O(F_9)\ni(x^2+\frac{y}{n},xy,x)\to(x^2,xy,x)\in O(F_{14})$

$O(F_9)\ni(\frac{x^2}{n}+y,xy,x)\to(y,xy,x)\in O(F_{17})$

$O(F_{11})\ni(x^2,\frac{y^2}{n}+xy,x)\to(x^2,xy,x)\in O(F_{14})$

To show that $O(F_{17})\not\subset \overline{O(F_{11})}$ suppose that $F\in O(F_{11})$ is close to $F_{17}$. We may assume that $F=(xy+a_1x^2+c_1y^2,x+a_2x^2+c_2y^2,y+a_3x^2+c_3y^2)$. Since $\dim_q(F)=\dim_q(F_{11})=2$ we have $a_2c_3-a_3c_2=0$ and $J_1(F)=(1+2a_2x)(1+2c_3y)-4a_3c_2xy=1+2a_2x+2c_3y$. So $C(F)=V(1+2a_2x+2c_3y)$, in particular $J_1(F)$ must divide $J_3(F)=2c_2y^2+4(a_1c_2-a_2c_1)xy-2a_2x^2-x-2c_1y$. Since $J_3(F)$ does not have a constant term we must have $J_3(F)=-(1+2a_2x+2c_3y)(x+2c_1y)$, so $4(a_1c_2-a_2c_1)=-4a_2c_1-2c_3$ and $2c_2=-4c_1c_3$. This implies $c_3(1-4a_1c_1)=0$. Since $a_1$ and $c_1$ are close to $0$ we have $c_3=c_2=0$. Similarly $J_1(F)$ must divide $J_2(F)$ which implies $a_2=a_3=0$. However this means that $\dim_q(F)=1$, which is a contradiction.

We show that $O(F_{10})\not\subset \overline{O(F_{11})}$ similarly as with $O(F_{17})$ and $O(F_{11})$. Indeed, if $F$ is close to $F_{11}$ then $J_3(F)$ is close to $2x^2-y$, in particular it is an irreducible quadric. Thus $C(F)$ may not be a line, so $F\notin O(F_{11})$.

$O(F_4)\ni(x^2,\frac{y^2}{2n}+y,\frac{xy}{n}+x)\to(x^2,y,x)\in O(F_{18})$

$O(F_{14})\ni(x^2,\frac{xy}{n}+y,x)\to(x^2,y,x)\in O(F_{18})$

Now we will show that $O(F_{20})\not\subset \overline{O(F_4)}$, since obviously $O(F_{20})\subset \overline{O(F_{15})}$ and $O(F_{20})\subset \overline{O(F_{19})}$ that implies that also $O(F_{15})$ and $O(F_{19})$ are not contained in the closure of $O(F_4)$. Suppose that $F\in O(F_4)$ is close to $F_{20}$. We may assume that $F=(xy+a_1x^2+c_1y^2+e_1y,x+a_2x^2+c_2y^2+e_2y,a_3x^2+c_3y^2+e_3y)$. Since $\dim_q(F)=\dim_q(F_4)=3$ we have $t=a_2c_3-a_3c_2\neq 0$. Thus $F$ is equivalent to $F'=(xy+a_1x^2+c_1y^2+e_1y,x^2+t^{-1}(c_3x+(c_3e_2-c_2e_3)y),y^2+t^{-1}(-a_3x+(-a_3e_2+a_2e_3)y))$. Since $F'\in O(F_4)$ we must have $c_3e_2-c_2e_3=a_3=0$, so $t=a_2c_3\neq 0$ and $F'=(xy+a_1x^2+c_1y^2+e_1y,x^2+t^{-1}c_3x,y^2+t^{-1}a_2e_3y)$. Moreover, $F'$ is equivalent to $F''=(xy+Ax+By,x^2,y^2)$ for $2tA=-2a_1c_3-a_2e_3$ and $2tB=2te_1-2a_2c_1e_3-c_3$. Again, since $F''\in O(F_4)$ we must have $A=B=0$ which leads to $1=4a_1c_1+2a_2e_1$ which is a contradiction with $a_1$, $c_1$, $e_1$ and $a_2$ being close to $0$.

To show that $O(F_{19})\not\subset \overline{O(F_{14})}$ suppose that $F\in O(F_{14})$ is close to $F_{19}$. We may assume that $F=(xy+a_1x^2+c_1y^2+e_1y,x+y+a_2x^2+c_2y^2,a_3x^2+c_3y^2+e_3y)$. First observe that since $F\in O(F_{14})$ we have $\dim_q(F)=2$ (so $a_2c_3=a_3c_2$) and $\dim_a(F)=3$, moreover, if one of the components of $F$ is homogeneous of degree $1$ then that component must divide the quadratic parts of the other two components. Now we will show that $c_3\neq 0$. Indeed if $a_3=c_3=0$ then $e_3\neq 0$ and $y$ must divide $a_2x^2+c_2y^2$. In particular $a_2=0$, so $J_1(F)=e_3\neq 0$ and $C(F)=\emptyset$, a contradiction. Furthermore if $c_3=0$ and $a_3\neq 0$ then $F$ is equivalent to $F'=(xy+a_1x^2+c_1y^2+e_1y,x+(1-a_3^{-1}a_2e^3)y,a_3x^2+e_3y)$. Since $x+(1-a_3^{-1}a_2e_3)y$ must divide $x^2$ we have $a_3=a_2e_3$, but then $J_1(F')=e_3\neq 0$ gives a contradiction again. Thus we must have $c_3\neq 0$. Let $t=1-c_3^{-1}c_2e_3$, then $F$ is equivalent to $(xy+a_1x^2+c_1y^2+e_1y,x+ty,a_3x^2+c_3y^2+e_3y)$. Since $x+ty$ must divide $a_3x^2+c_3y^2$ we obtain $a_3\neq 0$ and $t\neq 0$. Furthermore $F$ is equivalent to $(a_1x^2+(1-2a_1t)xy+(c_1-t+a_1t^2)y^2+e_1y,x,a_3x^2-2a_3txy+(c_3+a_3t^2)y^2+e_3y)$ and we obtain the conditions $c_1-t+a_1t^2=0$, $c_3+a_3t^2=0$ and $(1-2a_1t)e_3=-2a_3te_1$ which will lead to a contradiction. Indeed, we have $t^{-1}c_1+a_1t=1$, since $a_1$ and $c_1$ are close to $0$ it implies that either $t$ is close to $c_1$ or $t$ is close to $a_1^{-1}$. In the former case it follows that $c_3$ is close to $c_2e_3$, $a_3$ is close to $a_2e_3$ and $e_3$ is close to $2te_1a_3$. Thus $2te_1a_2$ is close to $1$, which is a contradiction. In the latter case it follows that $c_3$ is close to $-a_1c_2e_3$, $a_3$ is close to $-c_3a_1^2$ and $a_1e_3$ is close to $2a_3e_1$. Thus $2a_1^2c_2e_1$ is close to $1$, which is the final contradiction.

\section{Topological types}\label{top}

Now we briefly describe topological types of mappings in $\Omega_{\C^2}(2,\ldots,2)$. The case $n=1$ is trivial and the case $n=2$ is described in \cite{fj}. Hence we start with the case $n=3$. We have following 18 topological types:

\begin{itemize}

\item[(1)] $F_1$.
\item[(2)] $F_2$.
\item[(3)] $F_3$.
\item[(4)] $F_4$.
\item[(5)] $F_5$, $F_8$, $G_2$.
\item[(6)] $F_6$, $F_9$, $F_{17}$, $F_{18}$, $F_{22}$, $F_{24}$, $G_0$, $G_1$, $G_3$, $G_4$.
\item[(7)] $F_{7}$.
\item[(8)] $F_{10}$.
\item[(9)] $F_{11}$, $F_{19}$, $F_{21}$.
\item[(10)] $F_{12}$.
\item[(11)] $F_{13}$.
\item[(12)] $F_{14}$, $F_{20}$.
\item[(13)] $F_{15}$.
\item[(14)] $F_{16}$.
\item[(15)] $F_{23}$, $F_{26}$, $F_{28}$.
\item[(16)] $F_{25}$.
\item[(17)] $F_{27}$.
\item[(18)] $F_{29}$.
\end{itemize}

In dimension $n=4$ there are no new topological types, indeed, $G_1$, $G_3$, $G_4$ are of type (6) and $G_2$ is of type (5). Similarly $G_0$ is of type (6), hence in dimensions $n>3$ there are no new topological types. 

In most cases showing equivalence or the lack of equivalence is trivial. However there are two exceptions which we will now handle. We will show that  $F_5$ is topologically equivalent to $F_8$ and that the mappings $F_3$ and $F_5$ are not equivalent (i.e. that the cases (3) and (5) are different). We have
$$F_5\circ(x-y,y)=(x^2,y^2,x+y)\circ (x-y,y)=(x^2-2xy+y^2,y^2,x)\ \rm{and}$$
$$(-\frac{1}{2}(x-y-z^2),y,z)\circ (x^2-2xy+y^2,y^2,x)=(xy,y^2,x)\sim (x^2,xy,y)=F_8.$$

Now assume that $F_3$ is topologically equivalent to $F_5$. Let $P=0$ describe the surface $F_3(\C^2)$ and $Q=0$ describe the surface $F_5(\C^2)$. Since $F_3$ is topologically equivalent with $F_5$ the germ $P_0$ is topologically equivalent to the germ $Q_0$. Now we use the following result of Navarro Aznar (see \cite{r-t}):

\vspace{3mm}

{\it If $P,Q:(\C^3,0)\to (\C,0)$ are reduced germs of holomorphic functions 
 which are topologically equivalent and $\mu_0(Q)=2$ then also $\mu_0(P)=2$.}

\vspace{3mm} 

Now observe that in our case $P=x^3-x^2y^2+2xyz^2-z^4$, $Q=x^2-2xy-y^2-2xz^2-2yz^2+z^4$, hence $\mu_0(Q)=2$ but $\mu_0(P)=3$ and we obtain a contradiction. Thus $F_3$ is not topologically equivalent to $F_5$. 

It is worth to note that for quadratic mappings from $\C^2$ to $\C^n$ topological equivalence coincides with equivalence given by the group of global polynomial automorphisms of $\C^2$ and $\C^n$.

\begin{re}
{\rm In \cite{fj} we stated the following:}

{\bf Conjecture 1}. {\it For every $d_1, d_2>0$ the set $U$ of topologically generic mappings in $\Omega_{\C^2}(d_1,d_2)$ is an open affine subvariety of $\Omega_{\C^2}(d_1,d_2)$. In particular every topologically generic mapping is topologically stable, i.e. remains topologically generic after a small deformation.}

{\rm Note that from our topological classification we have that this conjecture is not true for $\Omega_{\C^2}(d_1,\ldots,d_k)$, where $k\geq 4$. Indeed, $F(x,y)=(x,y,0,0)$ is topologically generic in $\Omega_{\C^2}(2,2,2,2)$.
Now take $F_n(x,y)=\left(\frac{y^2}{n}+x,\frac{x^2}{n}+y,0,0\right),$ none of $F_n$ is topologically generic however $\displaystyle \lim_{n\rightarrow\infty}F_n=F$. The same argument works for all spaces $\Omega_{\C^2}(2,2,2,\ldots,2)$.

However it is worth to mention that our conjecture works for the space $\Omega_{\C^2}(2,2,2)$.}
\end{re}

\vspace{5mm}

Similarly in the real case we have the following types (here we use the group of real polynomial automorphisms of $\R^2$ and $\R^n$):

\begin{itemize}

\item[(1)] $F_{1}$.
\item[(2)] $F_{1'}$.
\item[(3)] $F_2$.
\item[(4)] $F_3$.
\item[(5)] $F_4$.
\item[(6)] $F_5$, $F_8$, $G_2$.
\item[(7)] $F_6$, $F_9$, $F_{17}$, $F_{17'}$, $F_{18}$, $F_{22}$, $F_{24}$, $G_0$, $G_1$, $G_3$, $G_{3'}$.
\item[(8)] $F_{7}$.
\item[(9)] $F_{7'}$.
\item[(10)] $F_{10}$.
\item[(11)] $F_{11}$, $F_{19}$, $F_{19'}$, $F_{21}$.
\item[(12)] $F_{12}$.
\item[(13)] $F_{13}$.
\item[(14)] $F_{13'}$.
\item[(15)] $F_{14}$, $F_{20}$.
\item[(16)] $F_{15}$.
\item[(17)] $F_{16}$.
\item[(18)] $F_{23}$, $F_{26}$, $F_{28}$.
\item[(19)] $F_{25}$.
\item[(20)] $F_{25'}$.
\item[(21)] $F_{27}$.
\item[(22)] $F_{29}$.

\end{itemize}

In dimension $n=4$ there are no new topological types, indeed, $G_1$, $G_3$, $G_{3'}$, $G_4$ are of type (7) and $G_2$ is of type (6). Similarly $G_0$ is of type (7), hence in dimensions $n>3$ there are no new topological types.

\end{document}